\newcommand{\mathsout}[1]% will draw line through middle of #1
\numberwithin{equation}{section}
	\theoremstyle{definition}
	\newtheorem{definition}{Definition}[section]
    \newtheorem{remark}{Remark}[section]
	\theoremstyle{theorem}
    \newtheorem{corollary}{Corollary}[section]
	\newtheorem{theorem}{Theorem}[section]
    \newtheorem{lemma}{Lemma}[section]
	\theoremstyle{remark}
\newcommand\numberthis{\addtocounter{equation}{1}\tag{\theequation}}
\newcommand{\R}{\ensuremath{\mathbb{R}}}
\NewDocumentCommand{\integ}{m m O{} D<>{}}{
\IfNoValueTF{#3}{\ensuremath{\displaystyle \int_{#2} #4 \, \mathrm{d}#1}}
{\ensuremath{\displaystyle \int_{#2}^{#3} #4 \, \mathrm{d}#1}}
}
\newcommand{\norm}[1]{\left\lVert#1\right\rVert}
\newcommand{\e}[1]{\ensuremath{\exp{\left(#1\right)}}}
\let\RealFrac\frac
\NewDocumentCommand\f{mg}{%
  \IfNoValueTF{#2}
    {\RealFrac{1}{#1}}
    {\RealFrac{#1}{#2}}%
}
\newcommand{\ol}{\overline}
	\title{
		{Existence and Uniqueness of Traveling Fronts in Lateral Inhibition Neural Fields with Sigmoidal Firing Rates}\\
		%{\includegraphics{university.jpg}}
	}
	\author{Alan Dyson\footnote{Department of Mathematical Sciences, Lycoming College, 700 College Place, Williamsport, PA 17701-5192 (dysona33@gmail.com)}}
	\date{\today}
\begin{document}
\maketitle
%Abstract
\abstract 
We rigorously prove the existence of traveling fronts in neural field models with lateral inhibition coupling types and smooth sigmoidal firing rates. With Heaviside firing rates as our base point (where unique traveling fronts exist), we repeatedly apply the implicit function theorem in Banach spaces to provide a non-monotone version of the homotopy approach originally proposed by Ermentrout and McLeod (1993) in their seminal study of monotone fronts in purely excitatory models. By comparing smooth and Heaviside firing rates, we develop global wave speed and profile comparisons that guide our analysis, leading to uniqueness (modulo translation) in the perturbative case. Moreover, we establish a meaningful {\it a priori} existence result;
we prove existence holds for a range of firing rates, independent of continuation path.\\ \\
\noindent {\bf Key words. }traveling waves, integro-differential equations, existence, uniqueness, neural field models, implicit function theorem\\ \\
\noindent {\bf AMS subject classifications.} 35B25, 45K05, 92C20

\section{Introduction}\label{Section: Intro}
Traveling waves of voltage propagation are novel, network-level, neurophysiological patterns which researchers are studying across multiple disciplines. Such propagations can be readily observed in experimentation, which breaks down into {\it in vivo} and {\it in vitro} types. The combination of advanced electrode recording technology, voltage-sensitive dyes \cite{ferezou2006visualizing,petersen2003spatiotemporal}, and methods to pharmacologically mediate inhibition (such as delivering bicuculline, a $\textnormal{GABA}_{\textnormal{A}}$ antagonist used in numerous applications \cite{leventhal2003gaba,Benucci2007}) allows such patterns to be seen. Some examples of traveling waves include in the mammalian visual cortex \cite{Sato2012,Lee2005,nauhaus2009stimulus,Benucci2007} (and during binocular rivalry \cite{wilson2001dynamics}), primary somatosensory cortexes of Wistar rats \cite{Golomb2011} and rodents \cite{PintoandErmentrout-SpatiallyStructuredActivityinSynapticallyCoupledI.TravelingWaves,Connors-Generationofepileptiform}, and human \cite{zhang2015traveling} and guinea-pig \cite{traub1993analysis} hippocampus. Traveling waves are also known to be features of pathological disorders of the neocortex. For example, they have been observed during epileptiform discharges \cite{Connors-Generationofepileptiform,traub1993analysis,Golomb2011,wagner2015microscale} and migraines \cite{Lance-CurrentConceptsofmigraine}. With biologically motivated examples in mind, we study traveling waves in the context of one-dimensional, continuum-based models. Such an approach is justified based on the layered structure of the cortex, and has been widely pursued in mathematical neuroscience.
\subsection{Main Goal}
In this paper, we solve a particular subset of the traveling wave problem that is biologically motivated, but unsolved rigorously in continuum-based models---existence of traveling fronts in neural field models with smooth Heaviside (sigmoidal) firing rates and lateral inhibition coupling types. Such coupling types are widely assumed to represent activity {\it in vivo}, particularly in the visual cortex (see \cite{Sato2012} and references within), where traveling waves exist.

Specifically, with $\theta>0$ fixed, we are interested in the existence and uniqueness of traveling front solutions, $u(x,t)=U_{\tau}(x+\mu_{\tau} t),$ at wave speed $\mu_{\tau}>0,$ to the integro-differential equation \cite{PintoandErmentrout-SpatiallyStructuredActivityinSynapticallyCoupledI.TravelingWaves,ExistenceandUniqueness-ErmMcLeod,Amari1977}
\begin{equation} \label{eq:chSig_main}
u_t + u = \integ{y}{\mathbb{R}}[]<K(x-y)S_{\theta,\tau}(u(y,t))>.
\end{equation}
Under the traveling wave ansatz, the solution solves $F[U_{\tau},\mu_{\tau},S_{\theta,\tau}](z)=0,$ where
\begin{equation} \label{eq: mapping}
 F[U,\mu,S_{\theta,\tau}](z):= \mu U'(z) + U(z) -\integ{y}{\mathbb{R}}[]<K(z-y)S_{\theta,\tau}(U(y))>.
\end{equation}
Here, $u=u(x,t)$ represents average voltage in the spatial patch at position $x$ and time $t$; $K$ is a lateral inhibition (also called Mexican hat) synaptic coupling kernel in the sense that there exists $M>0$ such that $K(\cdot)> 0$ on $(-M,M)$ and $K(\cdot)<0$ on $(-\infty,-M)\cup (M,\infty)$; $\theta\in\left(0,\f{2}\right)$ is the minimum voltage threshold for coupling interactions. The monotone increasing, $C^\infty$ function, $S_{\theta,\tau}(u)\in[0,1],$ is the voltage-dependent firing rate for the network, which describes the proportion of presynaptic patches that are interacting with postsynaptic patches. For $\tau>0,$ we refer to $S_{\theta,\tau}$ as a smooth Heaviside function since it satisfies $S_{\theta,\tau}(u)=0$ for $u\leq \theta$ and $S_{\theta,\tau}(u)=1$ for $u\geq \theta+\tau.$ 

Note that sigmoidal firing rate functions are very commonly assumed, even in single cell studies, and are more realistic than Heaviside firing rates. The details of $K$ and $S_{\theta,\tau}$ are discussed in \cref{subsec: kernel_constructions,subsec: Firing_Hypoth}. 

Under the normalization $\int_\mathbb{R} K=1$, the parameter assumptions guarantee the existence of three fixed points: $u= 0<u=\beta(\theta,\tau)<u=1$. The fixed points $u= 0$ and $u=1$ are stable, while $u= \beta(\theta,\tau)$ is unstable. We are interested in traveling waves that are heteroclinic orbits connecting the stable fixed points $u= 0$ to $u=1,$ satisfying $U_{\tau}'>0$ when $U_{\tau} \in [\theta,\theta+\tau].$ See Figure \ref{fig: Front_ex2}. This last assumption is critical; it is still a wide open problem as to whether or not traveling fronts exist with disconnected super threshold regions; it is even unsolved in the Heaviside ($\tau=0$) case.
\begin{figure}[H]
\centering
\includegraphics[width=65mm]{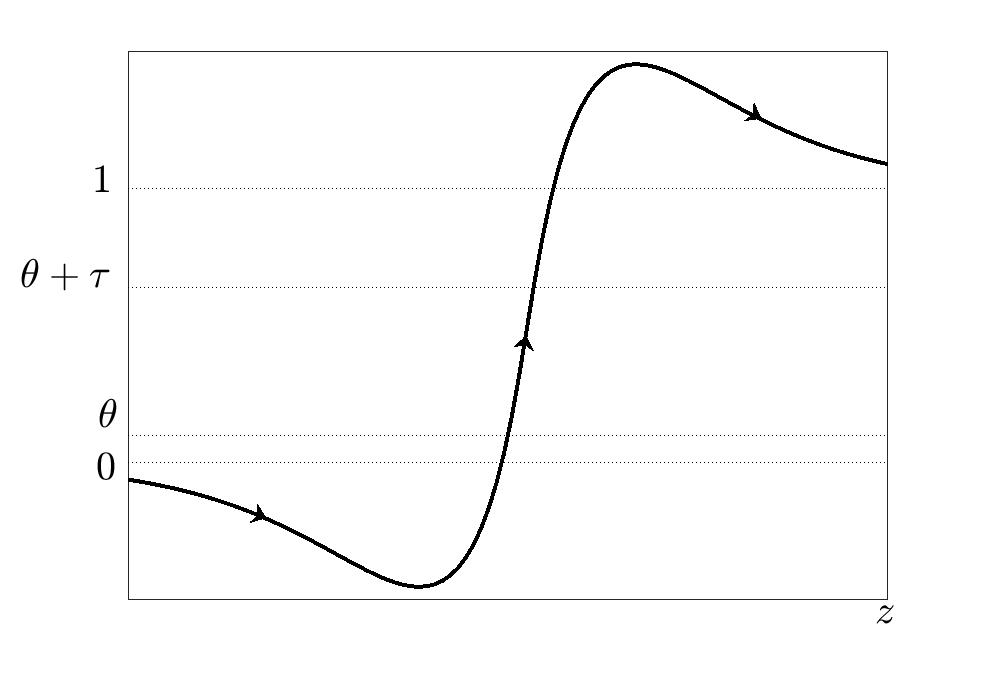}
\caption{Plot of traveling front $U_{\tau}$ with $U_{\tau}'>0$ when $U_{\tau}\in[\theta,\theta+\tau]$.}
\label{fig: Front_ex2}
\end{figure}
While inhibition can prevent wave propagation, we note that the condition $\int_\mathbb{R} K=1>0$ indicates that in totality, inhibition is over-compensated by excitation. This is important in traveling wave studies since one finds from the literature that kernels with inhibition can thwart propagation if excitatory and inhibitory features balance each other out. For example, standing waves \cite{Amari1977,AmariKishimoto,bressloff2005weakly,Multiplebumps-Laing,Botelho2008,pinto2001spatially,ExploitingtheHamiltonian} are commonly of interest. On the other hand, traveling waves exist in \cite{Dyson2019_MBE, LijZhangSolo,LijunZhangetal,magpantay2010wave,Lvwang,Zhang-HowDo,HuttZhang-TravelingWave,ExploitingtheHamiltonian} when $\int_\mathbb{R} K >0$ and Heaviside firing rates are assumed, even though the kernels have a variety of negative regions.

Since real cortical tissue has slower, metabolic features such as spike frequency adaptation, traveling fronts serve as building blocks for traveling pulse solutions to the singularly perturbed system with linear adaptation \cite{PintoandErmentrout-SpatiallyStructuredActivityinSynapticallyCoupledI.TravelingWaves}
\begin{align}
u_t + u+q &= \integ{y}{\mathbb{R}}[]<K(x-y)S_{\theta,\tau}(u(y,t))>, \label{eq: intro_system_1}\\
q_t &=\epsilon (u-\gamma q). \label{eq: intro_system_2}
\end{align}
Here, $0<\epsilon\ll 1.$ The function $q$ is considered to be a slow, leaking current. As was shown in \cite{PintoandErmentrout-SpatiallyStructuredActivityinSynapticallyCoupledI.TravelingWaves}, for $\gamma>0$ sufficiently small, a singular homoclinic orbit can be constructed when $\epsilon=0.$ For $\epsilon>0,$ traveling pulse solutions are true homoclinic orbits that connect the only fixed point $(u,q)\equiv(0,0)$ to itself, crossing each threshold twice. Both crossings occur during the fast transition layers, seen as $O(\epsilon)$ approximations of the traveling front and traveling back (a backwards traveling front with the same wave speed), respectively. 

The front, occurring when $q=\epsilon=0,$ serves the role of the first fast transition layer. Due to related results from \cite{Faye2013,Faye2015}, the existence of a traveling front is nearly sufficient to prove the the existence of a fast pulse. Hence, we dedicate the analysis in this work to the scalar equation \eqref{eq:chSig_main}, with the understanding that pulses also exist using singular perturbation techniques.
\subsection{Approach Based on Previous Results}
\subsubsection*{Heaviside Firing Rates, Lateral Inhibition Kernels}
In the celebrated work of Amari \cite{Amari1977}, it was shown that by assuming firing rates are Heaviside step functions, we can obtain traveling and standing wave solutions with closed form. Such a foundational approach has led to many important discoveries about patterns in neural fields. Moreover, in this case, an Evans function \cite{Zhang-HowDo} can be derived for the stability analysis, leading to analytical and computational methods for assessing the stability of solutions.

For the traveling wave problem, the mathematical approach is fundamentally different for Heaviside ($\tau=0$) versus smooth Heaviside ($\tau>0$) firing rates. In the case of the former, the existence and uniqueness problems do not involve functional analysis methods; model equations reduce to boundary value ODEs. In contrast, the $\tau>0$ case requires careful analysis of linear operators arising from Gateaux derivatives and repeated application of the implicit function theorem in Banach spaces---an approach that is largely inspired by Accinelli \cite{accinelli2010generalization}.

To this end, the most comprehensive traveling wave result concerning Heaviside firing rates and lateral inhibition kernels is from Zhang \cite{Zhang-HowDo}, who proved the existence, uniqueness, and stability under minimal assumptions on $K.$ Recently, existence and uniqueness were proven to hold for any $\theta$ without any assumptions on lateral inhibition kernels \cite{Dyson2019_MBE}. Since the shapes of the $\tau=0$ waves are related to the shapes of those when $\tau>0,$ we will review these results closer in \cref{sec: Apriori_wave}. 

Additionally, Guo \cite{Guo2012} proved the existence and stability of traveling fronts in neural field models with nonsaturating linear gain firing rates and lateral inhibition kernels for the special case where $K$ is the difference of exponentials. Though rigorous, the technique in this work is much different from ours; the specific structure of $K$ and the firing rate allowed the author to reduce the problem to a high order local ODE. In contrast, our result does not depend on reducing the scalar nonlocal equation to a local one. Moreover, our firing rate is smooth and levels off to a saturated value.
\subsubsection*{Nonnegative Kernels, Smooth Firing Rates}
Our approach is most closely inspired by the pioneering study of Ermentrout and McLeod \cite{ExistenceandUniqueness-ErmMcLeod}, where they invoked a continuation argument to prove the existence of unique (modulo translation) monotone front solutions to a similar version of \eqref{eq:chSig_main}. They handled the fact that solutions are translation invariant by using the linearized adjoint space to fix the translation. In their case, the firing rates $S_{\theta,\tau}$ are replaced with more general sigmoidal functions $S\in[0,1]$ with $S'>0$. A large part of their argument utilized the requirement that $K\geq 0$, and therefore, monotone fronts exist. Monotonicity, in particular, greatly simplifies the analysis for uniqueness and stability. Techniques that exploit monotonicity, such as comparison principles \cite{ExistenceUniqueness-Chen, Bates1997}, can be applied. Follow up studies, such as \cite{ermentrout2010stimulus}, have revealed interesting behavior when external stimuli evoke wave propagation.

Here, we also use adjoint spaces as tools to apply a continuation argument. However, our starting point at $\tau=0$ has considerably different dynamics. For one, it is convenient since existence and uniqueness are known to hold. Moreover, the profile of the front and general properties about the wave speed are well understood. Hence, we have valuable information when performing the first application of the implicit function theorem at $\tau=0.$ Regarding the jump itself, some care needs to be taken in the relevant functional spaces due to a formal delta distribution arising from the linearization.

After the first perturbation, we continuously increase $\tau>0,$ As we repeatedly apply the implicit function theorem as $\tau\to \ol{\tau},$ there are difficulties to overcome. Namely, because $K<0$ occurs, the Gateaux derivatives with respect to $u$ are not automatically positive linear operators as they were in \cite{ExistenceandUniqueness-ErmMcLeod}. As a result, proving existence holds under perturbations in $\tau$ via the implicit function theorem is nontrivial. We also have to overcome the fact that the implicit function theorem only guarantees that existence persists following perturbations of $\tau,$ but says nothing about passing through limits as $\tau \to \ol{\tau}.$ We need hypotheses on the wave's profile in order to (at least) pass through subsequential limits via the Arzel\'{a}--Ascoli and Bolzano--Weierstrass theorems.

\subsubsection*{Smooth Heaviside Firing Rates}
The analysis in this paper would be insufficient if our sigmoidal firing rates did not share similarities with Heaviside functions---namely, that $S_{\theta,\tau}(u)=0$ for $u\leq \theta$ and $S_{\theta,\tau}(u)=1$ for $u\geq \theta+\tau.$ Therefore, while the continuation approach in this work is the most similar to \cite{ExistenceandUniqueness-ErmMcLeod}, the core idea of bridging together Zhang's Heaviside result with Ermentrout and McLeod's sigmoidal result was inspired by the work of Coombes and Schmidt \cite{CoombesSchmidt}. They proposed numerical techniques to solve for traveling waves in \eqref{eq:chSig_main}, but left the fixed point arguments open.
\begin{remark} Although the model equations and approach are different, the reader is encouraged to see the work of Bates, Chen, and Chmaj \cite{bates2003traveling,bates2005heteroclinic} for a more broadened literature scope of nonlocal traveling wave and free energy functional problems, respectively. In particular, their work considers nonlocal dynamics where the coupling kernel may have a Mexican hat shape---a feature less often explored in the literature.
\end{remark}

\subsection{Firing Rate Hypotheses}\label{subsec: Firing_Hypoth}
Motivated by \cite{CoombesSchmidt}, we consider $S_{\theta,\tau}$, which is a smoothed Heaviside defined by 
\begin{equation}
\label{eq: smooth_heav}
S_{\theta,\tau}(u) = 
\begin{cases}
0 & u\leq \theta, \\
f(u-\theta, \tau)  & \theta<u<\theta+\tau, \\ 
1 & u\geq  \theta+\tau,
\end{cases}
\end{equation}
where $f(u,\tau)$ is $C^\infty$ smooth and increasing in $u,$ $L^1$ continuous over $u$ with respect to changes in $\tau,$ with $f(0,\tau)=0,$ $f(\tau,\tau)=1.$ See Figure \ref{fig: K and S} (b). We assume $S_{\theta,0}$ is the Heaviside step function in the sense that 
$$
\lim_{\tau \to 0^{+}} \f{\partial f}{\partial u} (u-\theta,\tau)=\delta_\theta(u),
$$
the delta distribution centered at $\theta.$ Note that by definition, for all $u,$
\begin{equation}\label{eq: Firing_bounds}
H(u-(\theta+\tau))\leq S_{\theta,\tau}(u)\leq H(u-\theta)
\end{equation}
holds. The bounds in \eqref{eq: Firing_bounds} play a critical role in proving that for solutions $(U_{\tau},\mu_{\tau}),$ the wave speed $\mu_{\tau}$ is trapped between the unique wave speeds arising from Heaviside models (\cref{lemma: Global_Wave_Bounds}); our other results are guided by this fact.

In addition to monotonicity, we will make a further assumption regarding the symmetry of $S_{\theta,\tau}.$
\begin{itemize}
\item For fixed $\theta$ and $\tau,$ the firing rate $S_{\theta,\tau}(u)$ is odd symmetric about the point $\left(\theta+\f{\tau}{2},S_{\theta,\tau}(\theta+\f{\tau}{2})\right).$  Specifically, for all $u,$
$$
S_{\theta,\tau}\left(\theta+\f{\tau}{2}+u\right)-\f{2}=-\left(S_{\theta,\tau}\left(\theta+\f{\tau}{2}-u\right)-\f{2}\right).
$$
\end{itemize}
Although the main techniques hold without this hypothesis, there are several reasons why we assert it. For one, it guarantees that 
$$
\integ{u}{\theta}[\theta+\tau]<S_{\theta,\tau}(u)>=\f{\tau}{2},
$$
which makes all explicit bounds and formulas in this paper easier to describe. More importantly, odd symmetry guarantees a one-to-one correspondence between the existence of a traveling front and back with the same wave speed. As discussed in \cref{Section: Intro}, this is sufficient to prove the existence of pulse solutions to \eqref{eq: intro_system_1}-\eqref{eq: intro_system_2}.

% Therefore, when $\epsilon=0,$ singular homoclinic orbits can be constructed by exploiting the different time scales in \eqref{eq: intro_system_1}-\eqref{eq: intro_system_2}. When $0<\epsilon\ll 1,$ we may prove the existence of traveling pulses by showing that real homoclinic oribts are close to singular homoclinic orbits. This is accomplished in \cref{sec: Existence_pulses}.

% For technical reasons, we also assume
% \begin{equation}\label{eq: assump_partialStau}
% \integ{u}{0}[\tau]< \f{\partial S_{\theta,\tau}}{\partial \tau} (u)> =\integ{u}{0}[\tau]< \f{\partial f}{\partial \tau} (u,\tau)><0.
% \end{equation}

%SECTION: KERNEL CONSTRUCTIONS
\subsection{Kernel Hypotheses} \label{subsec: kernel_constructions}

We assume $K\in W^{1,1}(\mathbb{R})$ is continuous on $\mathbb{R}$, smooth on $\mathbb{R}$ except possibly at the origin, and symmetric. Furthermore, in all cases, we assume $K$ has the following typical properties for lateral inhibition kernels:
\begin{itemize}
\item$
 \int_{-\infty}^0 K(x) \, \mathrm{d}x = \int_0^\infty K(x) \, \mathrm{d}x =\f{2}, \qquad |K(\cdot)|\leq C\e{-\rho |\cdot|}$ for some $\rho >0$.
\item There exists $M>0$ such that $K(\cdot)> 0$ on $(-M,M)$ and $K(\cdot)<0$ on $(-\infty,-M)\cup (M,\infty)$.
\end{itemize}
See Figure \ref{fig: K and S} (a). Symmetry is assumed for the mathematical convenience of determining the wave speed sign (positive in this work) in terms of $\theta$ and $\tau$ (see \cref{lemma: positive_wave_speed}).
\begin{figure}[H]
\centering
\subfigure[]
{\includegraphics[width=65mm]{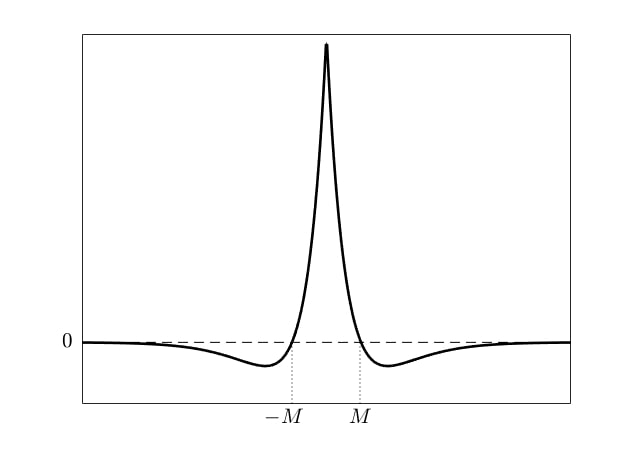}}
\subfigure[]
{\includegraphics[width=65mm]{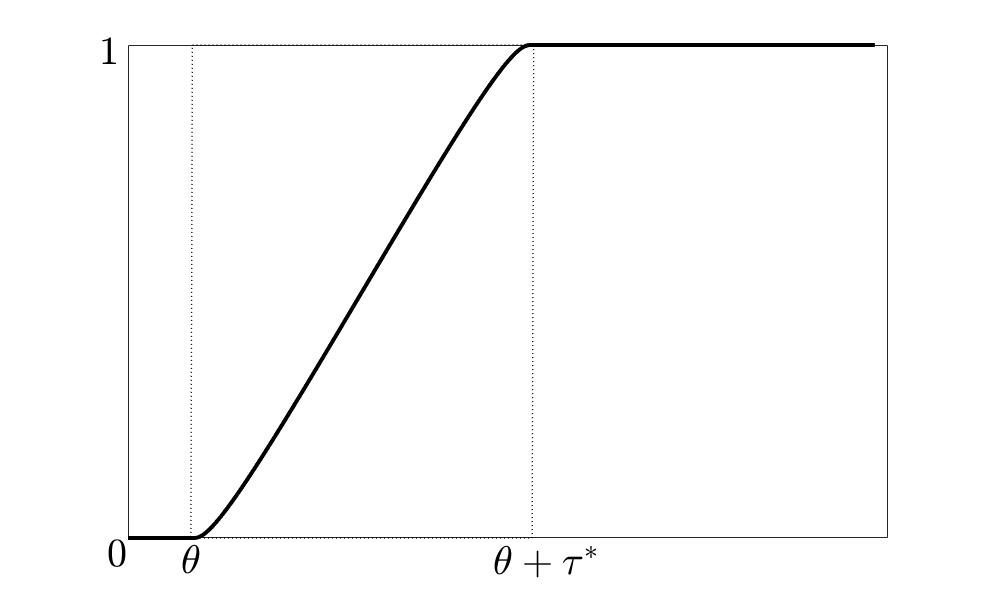}}
\caption{Example of (a) lateral inhibition (Mexican hat) kernel and (b) smooth Heaviside firing rate.}
\label{fig: K and S}
\end{figure}

\section{Main Results}
\label{sec: main_results}
In this section, we state the main theorems that we prove carefully. \cref{sec: Apriori_wave} is dedicated to rigorously proving global wave speed and profile comparisons between solutions arising from smooth versus Heaviside firing rates. The bulk of the functional analysis is performed in \cref{sec: E_fronts}, where we prove that a continuation procedure for existence cannot break down under our hypotheses. Finally, we work through a concrete example in \cref{sec: Example}.
\subsection{Assumptions and Conventions}
\label{subsec: Assumptions}
In addition to the hypotheses discussed in \cref{subsec: kernel_constructions,subsec: Firing_Hypoth}, we make the following simple assumptions.
\begin{itemize}
\item[(A1)] We restrict our attention to leftward traveling fronts by assuming $\theta\in(0,\f{2})$ and $\tau\in[0,1-2\theta]$ always hold. Similar techniques hold for rightward traveling fronts.
\item[(A2)] All solutions $(U_{\tau},\mu_{\tau})$ of interest will satisfy $\mu_{\tau}\geq 0,$ $U_{\tau}'>0$ when $U_{\tau} \in[\theta,\theta+\tau],$ and $U_{\tau}(-\infty)=0, \,U_{\tau}(\infty)=1.$
\item[(A3)] Prior to applying the implicit function theorem about a solution $(U_{\tau},\mu_{\tau}),$ we assume the translation $U_{\tau}^{-1}(\theta)=0$ so that the increasing, inverse function $\Delta_{\tau}\geq 0$ satisfies $U_{\tau}(\Delta_{\tau}(\xi))=\theta+\xi$ for $0\leq\xi\leq\tau$ with $\Delta_{\tau}(0)=0.$ Unless stated otherwise, we assume this translation for technical steps.
\end{itemize}
Under the assumptions above, the solution to $F[U_{\tau},\mu_{\tau},S_{\theta,\tau}](z)=0$ can be written formally as 
$$
U_{\tau}(z)=\int_0^\infty S_{\theta,\tau}(U_{\tau}(y))\left[\f{\mu_{\tau}} \integ{x}{-\infty}[0]<e^{\f{x}{\mu_{\tau}}}K(x+z-y)>\right]\,\mathrm{d}y.
$$
\subsection{Results In Less Technical Terms}\label{subsec: Nontech}
The following are the main highlights of our results. Let $\theta$ and $K$ be fixed.
\begin{itemize}
\item[(1)] {\bf Existence and Uniqueness Perturbation at $\boldsymbol{\tau=0}.$} Let $S_{\theta,0}(u)=H(u-\theta).$ Starting with the unique solution $(V_\theta,v_\theta)=(U_{0},\mu_{0}),$ we show that the implicit function theorem can be applied, perturbing $S_{\theta,0}$ to $S_{\theta,\tau}$ in the $L^1$ norm, and for $\tau\ll 1,$ there exists a unique (modulo translation) solution $(U_{\tau},\mu_{\tau})$ that is a perturbation of $(V_\theta,v_\theta).$ Effectively, this shows that for traveling wave problems where more smoothness on the Heaviside function is desired, we can replace the Heaviside function with a mollifier. We prove the result using a squeeze theorem argument based on the firing rate bounds given in \eqref{eq: Firing_bounds}. 
\item[(2)] {\bf Existence Continuation Criteria ($\boldsymbol{\Delta_{\tau}(\xi)\leq \sigma(\theta)}$) when $\boldsymbol{\tau>0}.$} For an arbitrary solution $(U_{\tau},\mu_{\tau})$, let $\Delta_{\tau}$ be as described in assumption (A3) above. We establish a number $\sigma(\theta)$ such that if $\Delta_{\tau}(\xi)\leq \sigma(\theta),$ we can perturb $\tau$ and the existence of (at least one) solution $(U_{\delta},\mu_{\delta})$ holds for all $\delta\in[\tau,\ol{\tau}).$ Moreover, if $\Delta_{\delta}(\xi)\leq \sigma(\theta)$ for all $\delta\in[\tau,\ol{\tau}),$ we use the Arzel\'{a}--Ascoli and Bolzano--Weierstrass theorems to prove that a solution $(U_{\ol{\tau}},\mu_{\ol{\tau}})$ exists satisfying $\Delta_{\ol{\tau}}(\xi)\leq\sigma(\theta).$ We can then perturb $\ol{\tau}$ and continue the process. Note that since $\Delta_{\tau}$ is increasing on $[0,\tau],$ the condition $\Delta_{\tau}(\xi)\leq \sigma(\theta)$ is equivalent to $\Delta_{\tau}(\tau)\leq \sigma(\theta).$
\item[(3)] {\bf \textit{A priori} Existence Results.} The continuation criteria mentioned above requires ``checking'' whether solutions $(U_{\tau},\mu_{\tau})$ satisfy $\Delta_{\tau}(\xi)\leq \sigma(\theta)$ prior to applying the implicit function theorem or passing families of solutions through subsequences. Since we have not proven uniqueness beyond the $\tau\ll1$ case, as $\tau\to\ol{\tau},$ even if existence holds, we cannot be sure that the continuation of solution pairs forms a unique mapping (modulo translation). Indeed, if uniqueness does not hold, there are possibly many different continuation paths and subsequential limit choices. Hence, it would be satisfying to find some {\it a priori} lower bound $\tau^*(\theta)$ for the set $$\{\tau\in(0,1-2\theta]: \textnormal{At least one solution }(U_{\delta},\mu_{\delta}) \textnormal{ exists for all } \delta\in[0,\tau]\}.$$
We achieve this in \cref{thm: cont_taustar}. Our method is to find global bounds on $\mu_{\tau}$ and $\Delta_{\tau}$ by trapping $S_{\theta,\tau}$ between two Heaviside functions; the details are worked out in technical \cref{lemma: Global_Utau_Bounds,lemma: Global_Wave_Bounds}. We then prove in \cref{thm: cont_taustar} that $\Delta_{\tau}(\tau)>\sigma(\theta)$ cannot occur for $\tau\leq\tau^*(\theta).$
\end{itemize}
\subsection{Results In Technical Terms}
Since our main results are based on comparing solutions to Heaviside solutions, we review Zhang's result \cite{Zhang-HowDo} and establish notation used throughout the paper. Whenever Heaviside solutions are referenced, we always assume the translation $V_\theta(0)=\theta.$
\subsubsection{Review of Heaviside Results}
\label{subsubsec: Review}
In \cite{Zhang-HowDo}, the following result was proved. 
\begin{theorem}
\label{thm: E_and_U_Zhang}
Suppose that $\tau=0,$ $\theta\in(0,\f{2})$ and  $K$ is a lateral inhibition kernel with $\integ{x}{-\infty}[0]<|x|K(x)>\geq 0$. Then there exists a unique (modulo translation) traveling wave front solution $u(x,t) = V_\theta(z) \in C^2(\mathbb{R})$ to \eqref{eq:chSig_main} such that $V_\theta(z) < \theta$ on $(-\infty,0)$, and $V_\theta(z) > \theta$ on $(0,\infty)$. The solution, which has closed form
\begin{align*}
V_\theta(z)&=\f{v_\theta} \int_0^\infty \int_{-\infty}^0 e^{\f{x}{v_\theta}}K(x+z-y)\,\mathrm{d}x\mathrm{d}y \numberthis \label{eq: front_closed} \\
&=\integ{x}{-\infty}[z]<K(x)>-\integ{x}{-\infty}[z]<e^{\f{x-z}{v_\theta}}K(x)>, \\
V_\theta'(z)&= \f{v_\theta} \integ{x}{-\infty}[0]<e^{\f{x}{v_\theta}}K(x+z)> \numberthis \label{eq: front_deriv_closed}
\end{align*}
satisfies the reduced equation
\begin{equation}
\label{eq: front_U0}
v_\theta V_\theta' + V_\theta = \integ{x}{\mathbb{R}}[]<K(z-x)H(V_\theta(x)-\theta)>
\end{equation}
with exponentially decaying limits
\begin{align*}
\lim_{z \to -\infty} V_\theta(z) = 0, \qquad \lim_{z \to \infty} V_\theta(z) = 1, \qquad \lim_{z \to \pm\infty} V_\theta'(z) = 0.
\end{align*}
The wave travels under the traveling coordinate $z = x +v_\theta t$ at the unique wave speed $v_\theta>0$, where $\mu=v_\theta$ is the unique solution to $V_\theta(0;\mu)=\theta$, or equivalently,
\begin{equation} \label{eq: speed_index_heaviside}
\phi(\mu):=\integ{x}{-\infty}[0]<e^{\f{x}{\mu}}K(x)>=\f{2}-\theta.
\end{equation}
\end{theorem}
Note that by \cite{Dyson2019_MBE}, the assumption $\integ{x}{-\infty}[0]<|x|K(x)> \geq 0$ can be removed, and it is proven that in all cases, $\phi'>0$ when $\phi\in(0,\f{2})$. Hence, $v_\theta$ is a well-defined, decreasing function of $\theta.$

For all $\theta,$ the profile of $V_\theta$ on the left half plane is similar in that there exists a unique $\sigma_1(\theta)\in (0,M)$ such that $V_\theta(-\infty)=0,$  $V_\theta$ is decreasing on $(-\infty,-\sigma_1(\theta))$ and increasing on $(-\sigma_1(\theta),0)$ until $V_\theta(0)=\theta.$ 

On the right half plane, there are only two possible wave profiles. One possibility is that $V_\theta$ is increasing on $(0,\infty)$ until $V_\theta(\infty)=1.$ The other is that there exists a unique $z_R(\theta)\in (M,\infty)$ such that $V_\theta$ is increasing on $(0,z_R(\theta))$ and decreasing on $(z_R(\theta),\infty)$ until $V_\theta(\infty)=1.$ 

Importantly, we note that $V_\theta'(z)>0$ on $(-\sigma_1(\theta),\sigma_1(\theta)).$
\subsubsection{Main Technical Statements}
The first two technical lemmas (using the notation from \cref{thm: E_and_U_Zhang}) establish important global bounds on $\mu_{\tau}$ and $\Delta_{\tau}.$ These bounds are critical for proving uniqueness and for formulating our continuation results in \cref{thm: continuation,thm: cont_taustar}. We prove them in \cref{sec: Apriori_wave}.
\newtheorem*{theorem*}{Theorem}
\begin{lemma}[Global Wave Speed Bounds]
\label{lemma: Global_Wave_Bounds}
Suppose $\tau\in(0,1-2\theta]$ and $(U_{\tau},\mu_{\tau})$ is a traveling front solution to \eqref{eq:chSig_main} with firing rate $S_{\theta,\tau}(u).$ Then
\begin{itemize}
\item[(i)] $0\leq \mu_{\tau}<v_\theta.$
\item[(ii)] If additionally $\tau<\f{2}-\theta,$ then $v_{\theta+\tau}<\mu_{\tau}<v_\theta.$ 
\end{itemize}
%Here, $v_\delta$ denotes the wave speed corresponding with unique fronts arising from firing rates $H(u-\delta)$. It solves 
%\begin{equation*}
%\integ{x}{-\infty}[0]<e^{\f{x}{v_\delta}}K(x)>=\f{2}-\delta. 
%\end{equation*}
%$v_\theta$ and $v_{\theta+\tau}$ denote the wave speeds corresponding with unique fronts arising from firing rates $H(u-\theta)$ and $H(u-(\theta+\tau))$ respectively. They solve
%\begin{align*}
%\integ{x}{-\infty}[0]<e^{\f{x}{v_\theta}}K(x)>&=\f{2}-\theta, \\
%\integ{x}{-\infty}[0]<e^{\f{x}{v_{\theta+\tau}}}K(x)>&=\f{2}-(\theta+\tau).
%\end{align*}
\end{lemma}

\begin{lemma}[Global Inverse Bounds]
\label{lemma: Global_Utau_Bounds}
Suppose $(U_{\tau},\mu_{\tau})$ is a traveling front solution to \eqref{eq:chSig_main} with firing rate $S_{\theta,\tau}(u).$ Then for all $\xi\in(0,\tau],$ the estimate $\omega_L(\Delta_{\tau}(\xi),\tau)<\xi<\omega_U(\Delta_{\tau}(\xi),\tau)$ holds, where 
$$
\omega_L(a,\tau)=
\begin{cases}
\displaystyle
\min_{\substack{y \in [0,a] \\ \delta\in [\theta,\theta+\tau]}} (V_\delta(a-y)-V_\delta(-y)), &\tau<\f{2}-\theta \\
\displaystyle\min_{\substack{y \in [0,a] \\ \delta\in [\theta,\f{2}]}} (V_\delta(a-y)-V_\delta(-y)), &\tau\geq\f{2}-\theta
\end{cases}
$$
$$
\omega_U(a,\tau)=
\begin{cases}
\displaystyle
\max_{\substack{y \in [0,a] \\ \delta\in [\theta,\theta+\tau]}} (V_\delta(a-y)-V_\delta(-y)), &\tau<\f{2}-\theta \\
\displaystyle\max_{\substack{y \in [0,a] \\ \delta\in [\theta,\f{2}]}} (V_\delta(a-y)-V_\delta(-y)), &\tau\geq\f{2}-\theta
\end{cases}
$$
%
%$$
%\omega_L(\Delta_{\tau}(\tau),\tau)=
%\begin{cases}
%\displaystyle
%\min_{\substack{y \in [0,\Delta_{\tau}(\tau)] \\ \delta\in [\theta,\theta+\tau]}} (V_\delta(\Delta_{\tau}(\tau)-y)-V_\delta(-y)), &\tau<\f{2}-\theta \\
%\displaystyle\min_{\substack{y \in [0,\Delta_{\tau}(\tau)] \\ \delta\in [\theta,\f{2}]}} (V_\delta(\Delta_{\tau}(\tau)-y)-V_\delta(-y)), &\tau\geq\f{2}-\theta
%\end{cases}
%$$
%
%$$
%\omega_U(\Delta_{\tau}(\tau),\tau)=
%\begin{cases}
%\displaystyle
%\max_{\substack{y \in [0,\Delta_{\tau}(\tau)] \\ \delta\in [\theta,\theta+\tau]}} (V_\delta(\Delta_{\tau}(\tau)-y)-V_\delta(-y)), &\tau<\f{2}-\theta \\
%\displaystyle\max_{\substack{y \in [0,\Delta_{\tau}(\tau)] \\ \delta\in [\theta,\f{2}]}} (V_\delta(\Delta_{\tau}(\tau)-y)-V_\delta(-y)), &\tau\geq\f{2}-\theta
%\end{cases}
%$$
%Here,
%\begin{align*}
%V_\delta(z)&=\f{v_\delta} \int_0^\infty \int_{-\infty}^0 e^{\f{x}{v_\delta}}K(x+z-y)\,\mathrm{d}x\mathrm{d}y \\
%&= \integ{x}{-\infty}[z]<K(x)>-\integ{x}{-\infty}[z]<e^{\f{x-z}{v_\delta}}K(x)>,
%\end{align*}
%where $v_\delta$ uniquely solves
%$$
%\integ{x}{-\infty}[0]<e^{\f{x}{v_\delta}}K(x)>=\f{2}-\delta.
%$$
%The pair $(V_\delta,v_\delta)$ denotes the unique traveling front with nonnegative wave speed $v_\delta$ arising from firing rate $H(u-\delta),$ translated so that $V_\delta(z)<\delta$ on $(-\infty,0)$ and $V_\delta(z)>\delta$ on $(0,\infty).$
\end{lemma}
We see in \cref{lemma: Global_Utau_Bounds} that the bounds on $\Delta_{\tau}$ are defined implicitly. However, one of the conveniences of the result is that all functions $V_\delta$ have closed form.

Using the global bounds above, we are able to obtain our main existence and uniqueness results. The next theorem rigorously shows that fronts persist under $L^1$ perturbations of the Heaviside firing rate. We use the implicit function theorem to prove existence and \cref{lemma: Global_Wave_Bounds,lemma: Global_Utau_Bounds} to prove uniqueness. The application of this theorem gives the results in \cref{subsec: Nontech} (1). 
\begin{theorem}[Perturbation of Heaviside]
\label{thm: perturb}
Suppose $(V_\theta,v_\theta)$ is a unique front solution pair to 
\begin{equation}
\label{eq: front_U0}
v_\theta V_\theta' + V_\theta = \integ{y}{\mathbb{R}}[]<K(z-y)S_{\theta,0}(V_\theta(y))>,
\end{equation}
with $S_{\theta,0}(u)=H(u-\theta)$ and $(V_\theta,v_\theta)$ satisfying all of the conditions of \cref{thm: E_and_U_Zhang}. Assume $\{S_{\theta,\tau} \}_{\tau\geq 0}$ is a family of smooth firing rate functions continuously deformed in $\norm{\cdot}_1$ by $\tau$. Then there exists small $\delta_0>0$ such that for $\tau \in [0,\delta_0)$, there exists a unique (modulo translation) solution pair $(U_{\tau},\mu_{\tau})$ satisfying
\begin{align}
\label{eq: front_Utau}
\mu_{\tau} U_{\tau}' + U_{\tau} = \integ{y}{\mathbb{R}}[]<K(z-y)S_{\theta,\tau}(U_{\tau}(y))>.
\end{align}
The solution $U_{\tau} \in C^\infty(\mathbb{R})$ is translation invariant, travels under the coordinate $z=x+\mu_{\tau} t$ with wave speed $\mu_{\tau}>0$, and satisfies (A2) with $\Delta_{\tau}(\xi)\ll1.$ Moreover, the limits 
\begin{equation}\label{eq: Utau_limits}
\lim_{z\to -\infty} U_{\tau}(z)=0, \qquad
\lim_{z\to \infty} U_{\tau}(z)=1, \qquad
\lim_{z\to \pm\infty} U_{\tau}^{(j)}(z)=0, \qquad \textnormal{ for } j\geq 1
\end{equation}
hold. Under the translation $U_{\tau}^{-1}(\theta)=0,$ the solution takes on the form
$$
U_{\tau}(z)=\int_0^\infty S_{\theta,\tau}(U_{\tau}(y))\left[\f{\mu_{\tau}} \integ{x}{-\infty}[0]<e^{\f{x}{\mu_{\tau}}}K(x+z-y)>\right]\,\mathrm{d}y.
$$
\end{theorem}
\subsection*{Continuation Criteria and Calculation of $\sigma(\theta)$}
\label{subsec: Continuation_Hypothesis}
Following up our perturbation of $\tau=0$ result, we assume $\tau>0$ for the remaining results, which help us achieve \cref{subsec: Nontech} (2)-(3). We calculate $\sigma(\theta)$ and prove that when $\Delta_{\tau}(\xi)\leq\sigma(\theta)$ the implicit function, Arzel\'{a}--Ascoli, and Bolzano--Weierstrass theorems can be applied, generating a continuation procedure that cannot fail to pass through limits (of subsequences).

Fix $\theta\in(0,\f{2}).$ As is discussed in \cref{subsubsec: Review}, define $\sigma_1(\theta)\in (0,M)$ to be the unique constant such that $V_\theta'(-\sigma_1(\theta))=0,$ or equivalently,
$$
\integ{x}{-\infty}[0]<e^{\f{x}{v_\theta}}K(x-\sigma_1(\theta))>=0.
$$

Define $\sigma_2(\theta)$ to be the positive constant that is the unique solution to the equation
\begin{align}
\left(\int_{-\infty}^{-M-\sigma_2(\theta)}+\int_{-M}^{-M+\sigma_2(\theta)}\right) K(x) \, \mathrm{d}x &=\theta. \label{eq: sigma2}
\end{align}
Note that $\sigma_2(\theta)$ is an increasing function on $(0,\f{2}).$ With these constants defined, let 
$$
\sigma(\theta):=\min\{\sigma_1(\theta),\,\sigma_2(\theta)\}.
$$ 
Some remarks about the strategy behind demanding $\Delta_{\tau}(\xi)\leq \sigma_1(\theta)$ and $\Delta_{\tau}(\xi)\leq \sigma_2(\theta)$ are in order. Both bounds are set based on what can go wrong for limiting solutions as $\tau\to\ol{\tau}.$ Namely, from assumption (A2), we want $U_{\ol{\tau}}$ to be increasing through the threshold region $[\theta,\theta+\ol{\tau}]$ with $U_{\ol{\tau}}(-\infty)=0, \, U_{\ol{\tau}}(\infty)=1.$

Firstly, in the limit, we need to show that if $U_{\ol{\tau}}'\geq (\not\equiv) 0$ when $U_{\ol{\tau}}\in(\theta,\theta+\ol{\tau}),$ then $U_{\ol{\tau}}'>0$ in the same region. This is analogous to \cite{ExistenceandUniqueness-ErmMcLeod} and is actually trivial to show when $K>0.$ However, when $K<0$ occurs, this is less obvious. Requiring $\Delta_{\tau}(\xi)\leq \sigma_1(\theta)$ ensures this fact, as well as the fact that the implicit function theorem can be applied because $\lambda=0$ is a simple eigenvalue of the Gateaux derivative.

Secondly, we need to be guaranteed that on the left (resp. right) half plane, local maximums (resp. minimums) do not touch the threshold region boundary $\{\theta,\theta+\ol{\tau}\}.$ The parameter $\sigma_2(\theta)$ is calculated for this purpose. By setting $\sigma(\theta)=\min\{\sigma_1(\theta),\sigma_2(\theta)\},$ all threshold conditions are met when $\Delta_{\tau}(\xi)\leq \sigma(\theta).$

Finally, as will be shown in \cref{lemma: U_uniformly_bounded}, uniformly bounding $\Delta_{\tau}(\xi)$ is sufficient to show that if solutions exist on $[\tau,\ol{\tau}),$ then there exists a subsequence of solutions with $(U_{\delta_n},\mu_{\delta_n})\to (U_{\ol{\tau}},\mu_{\ol{\tau}})$ as $\delta_n\to \ol{\tau}$ with $U_{\ol{\tau}}(-\infty)=0, \, U_{\ol{\tau}}(\infty)=1$ holding.

The following theorem summarizes these remarks and allows us to achieve \cref{subsec: Nontech} (2).
%Then our main hypothesis is 
%\begin{equation*}
%\boxed{
%\textnormal{Suppose } \mu_{\tau} \geq 0, \, U_{\tau}'>0 \textnormal{ when } U_{\tau} \in[\theta,\theta+\tau], \textnormal{ and } \Delta(\tau) \leq \sigma(\theta). \tag{$H_1(\theta,\tau)$}
%}
%\end{equation*}
%Under $H_1,$ we show that the implicit function theorem can be applied for $\tau>0$ and the following continuation theorem holds. This theorem relies on the {\it a priori} estimates specified in $H_1.$
\begin{theorem}[Continuation Criteria]
\label{thm: continuation}
Suppose at $\tau\in(0,1-2\theta),$ a solution $(U_{\tau},\mu_{\tau})$ exists satisfying assumption (A2) with $\Delta_{\tau}(\xi)\leq \sigma(\theta).$ Then there exists $\ol{\tau}>\tau$ such that for all $\delta\in[\tau,\ol{\tau}),$ a solution $(U_{\delta},\mu_{\delta})$ exists. Moreover, if for all $\delta\in[\tau,\ol{\tau}),$ solutions satisfy (A2) with $\Delta_{\delta}(\xi)\leq \sigma(\theta),$  then there exists a solution $(U_{\ol{\tau}},\mu_{\ol{\tau}})$ satisfying (A2) with $\Delta_{\ol{\tau}}(\xi)\leq \sigma(\theta).$
\end{theorem}

\subsubsection{A Priori Existence Results}
We are left with the {\it a priori} existence result discussed in \cref{subsec: Nontech} (3). In \cref{thm: cont_taustar} below, we find a lower bound $\tau^*(\theta)$ for the set $$\{\tau\in(0,1-2\theta]: \textnormal{At least one solution }(U_{\delta},\mu_{\delta}) \textnormal{ exists for all } \delta\in[0,\tau]\}.$$
We prove it using the global bounds presented in \cref{lemma: Global_Utau_Bounds,lemma: Global_Wave_Bounds}.

Define the function 
\begin{equation}\label{eq: Phi_tau}
\Phi(\theta,\tau):=\tau-\omega_L(\sigma(\theta),\tau).
\end{equation}
By the definition in \cref{lemma: Global_Utau_Bounds}, we see that $\omega_L(\sigma(\theta),\tau)$ is a decreasing function in $\tau$ for $\tau \in(0,\f{2}-\theta)$ and constant for $\tau\in[\f{2}-\theta,1-2\theta].$ Therefore, $\Phi(\theta,\tau)$ is increasing in $\tau$ with $\Phi(\theta,0)=-\omega_L(\sigma(\theta),0)<0.$ We use this fact to prove the following theorem. Our proof assumes \cref{lemma: Global_Utau_Bounds,lemma: Global_Wave_Bounds} and \cref{thm: perturb,thm: continuation} hold, even though we still have to prove them separately with techniques independent of this theorem.
\begin{theorem}[Existence for $\tau\leq \tau^*(\theta)$]\label{thm: cont_taustar}
If $\Phi(\theta,1-2\theta)>0,$ define $\tau^*(\theta) \in (0,1-2\theta]$ by $\tau^*(\theta)=\inf\{\tau\in(0,1-2\theta]: \Phi(\theta,\tau)>0\}.$ If $\Phi(\theta,1-2\theta)\leq 0,$ set $\tau^*(\theta)=1-2\theta.$ Then for all $\tau\leq \tau^*(\theta),$ there exists a solution $(U_{\tau},\mu_{\tau})$ satisfying (A2) with $\Delta_{\tau}(\xi)\leq\sigma(\theta).$
\end{theorem}
\begin{proof}
By \cref{lemma: Global_Utau_Bounds}, we know $\omega_L(\Delta_{\tau}(\tau),\tau)<\tau$ has to hold for any solution $(U_{\tau},\mu_{\tau})$ satisfying (A2). We also know by \cref{thm: perturb,thm: continuation} that existence may only fail at $\tau$ if we cannot find solutions where $\Delta_{\tau}(\tau)\leq\sigma(\theta).$

Suppose $\Phi(\theta,1-2\theta)>0.$ Then 
$$\{\tau\in(0,1-2\theta]: \Phi(\theta,\tau)>0\}\neq \emptyset$$ and $\tau^*(\theta)\geq 0$ exists. We show $\tau^*(\theta)>0.$ Suppose $\tau^*(\theta)=0.$ By definition, any sufficiently small $\epsilon$ is not a lower bound for the set above so there exists $\tau_1\in[\tau^*(\theta),\epsilon)$ such that $\Phi(\theta,\tau_1)>0,$ or equivalently, $0<\omega_L(\sigma(\theta),\tau_1)<\tau_1.$ By the squeeze theorem, this implies 
$$
\lim_{\tau_1\to0} \omega_L(\sigma(\theta),\tau_1)=\lim_{\tau_1\to0}
\min_{\substack{y \in [0,\sigma(\theta)] \\ \delta\in [\theta,\theta+\tau_1]}} (V_\delta(\sigma(\theta)-y)-V_\delta(-y))=0.
$$ 
Based on the fact that each $V_\delta(y)$ is increasing on $[-\sigma(\theta),\sigma(\theta)]$ (see \cref{cor: r_positive}), this is impossible for fixed $\sigma(\theta)>0.$ Hence, $\tau^*(\theta)>0$.

Suppose for some continuation process, as $\tau\to\ol{\tau},$ the value $\Delta_{\tau_0}(\tau_0)=\sigma(\theta)$ occurs for some $\tau_0.$ From the global bound $\omega_L(\Delta_{\tau_0}(\tau_0),\tau_0)<\tau_0,$ it follows that $\Phi(\theta,\tau_0)>0$ so $\tau_0\geq\tau^*(\theta)>0.$ Hence, for all $\tau<\tau^*(\theta),$ we know that all solutions satisfy $\Delta_{\tau}(\tau)<\sigma(\theta).$ By \cref{thm: continuation}, we may take any continuation path as $\tau\to\tau^*(\theta)$ and at least one solution will exist for all $\tau\leq\tau^*(\theta)$ with $\Delta_{\tau}(\xi)\leq \sigma(\theta).$

Now suppose $\Phi(\theta,1-2\theta)\leq 0.$ Since $\Phi$ is increasing in $\tau,$ this means $\Phi(\theta,\tau)\leq 0$ for all $\tau\leq 1-2\theta.$ By similar reasoning to the first case, at least one solution will exist for all $\tau\in(0,1-2\theta]$ since $\Delta_{\tau}(\tau)>\sigma(\theta)$ cannot occur.
\end{proof}
\begin{corollary}
If $\Phi(\theta,1-2\theta)\leq 0,$ then there exists a standing front $(U_{\tau},0)$ satisfying (A2) when $\tau=1-2\theta.$
\end{corollary}
\begin{proof}
By \cref{thm: cont_taustar}, if $\Phi(\theta,1-2\theta)\leq 0,$ continuation may proceed from $\tau=0$ to $\tau=1-2\theta,$ producing fronts satisfying (A2). By \cref{lemma: positive_wave_speed} below, $\mu_{\tau}=0$ when $\tau=1-2\theta.$
\end{proof}
\begin{remark}
The proof of \cref{thm: cont_taustar} assumed $\Phi$ is increasing in $\tau,$ but not necessarily continuous. Since $\omega_L(\sigma(\theta),\tau)$ is constant for $\tau\geq \f{2}-\theta,$ we know that $\Phi$ is continuous on this domain. However, we have not proven that $\omega_L(\sigma(\theta),\tau)$ is continuous for $\tau<\f{2}-\theta.$ For this reason, we needed to account for jumps over zero as $\Phi$ increases.
\end{remark}
\begin{remark}
To simplify the technical steps, we always assume that the lower threshold $\theta$ is fixed, and the upper threshold $\theta+\tau$ is increasing through the parameter $\tau$. However, one can easily set up the scheme with the upper threshold $\ol{\theta}$ fixed, with the lower threshold $\ol{\theta}-\tau$ decreasing. The motivation for choosing the former over the latter is somewhat arbitrary. We suspect that the consideration of both schemes may play a role in proving uniqueness beyond the perturbative case.
\end{remark}
%\begin{remark}
%When $\tau > 1-2\theta,$ we can no longer obtain fronts with nonnegative wave speeds. However, a similar continuation hypothesis to $H_1(\theta,\tau)$ can be formulated for fronts with negative wave speeds.
%\end{remark}
%\begin{remark}
%Starting from the Heaviside case, we assume $0<\theta<\f{2}$ in order for the waves to be leftward traveling. If we assume $\f{2}<\theta < 1$, we could obtain rightward traveling fronts. By symmetry of $K$, there is no difficulty in studying such a starting point using similar techniques. Hence, we omit the analysis for waves with negative wave speeds only to avoid the tedious repetition of similar ideas.
%\end{remark}

\section{Proof of Global Bounds}\label{sec: Apriori_wave}
In this section, we prove \cref{lemma: Global_Wave_Bounds,lemma: Global_Utau_Bounds}. We achieve this by drawing important connections between Heaviside solutions and smooth Heaviside solutions.

When $\tau>0,$ we no longer have a simple formula for the wave speed. We can, however, show that in order for $\mu_{\tau}\geq0$, we need $\tau\leq1-2\theta.$
\begin{lemma}\label{lemma: positive_wave_speed}
Under assumption (A2), the wave speed $\mu_{\tau}$ is nonnegative if $\tau\in(0,1-2\theta].$
\end{lemma}
\begin{proof}
%Without loss of generality, assume $U_{\tau}^{-1}(\theta)=0$ and denote $\Delta(\tau)=U_{\tau}^{-1}(\theta+\tau).$
In \eqref{eq: front_Utau}, multiply both sides by $S_{\theta,\tau}'(U_{\tau}(z))U'_{\tau}(z)$ and integrate over all $z$. Since $K$ is symmetric and $\int_\mathbb{R} K=1$, the same simplification as in \cite[Theorem 3.1]{ExistenceandUniqueness-ErmMcLeod} applies in that the wave speed has the formula
\begin{align*}
\mu_{\tau}&=\f{\integ{y}{\mathbb{R}}[]<(-U_{\tau}(y)+S_{\theta,\tau}(U_{\tau}(y)))S_{\theta,\tau}'(U_{\tau}(y))U_{\tau}'(y) >}{\integ{x}{\mathbb{R}}[]<U_{\tau}'(x)^2 S_{\theta,\tau}'(U_{\tau}(x))>} \\
&= \f{\integ{y}{0}[\Delta_{\tau}(\tau)]<(-U_{\tau}(y)+S_{\theta,\tau}(U_{\tau}(y)))S_{\theta,\tau}'(U_{\tau}(y))U_{\tau}'(y) >}{\integ{x}{0}[\Delta_{\tau}(\tau)]<U_{\tau}'(x)^2 S_{\theta,\tau}'(U_{\tau}(x))>},\numberthis \label{eq: wave_formula}
\end{align*}
where the changes in integration bounds are justified since $S_{\theta,\tau}'(U_{\tau})=0$ when $U_{\tau}\leq \theta$ or $U_{\tau}\geq \theta+\tau$.
Clearly the denominator is positive so the proof comes down to simplifying the numerator.

By the definition of $S_{\theta,\tau}$ and since $U_{\tau}'(\cdot)>0$ on $[0,\Delta_{\tau}(\tau)]$, a simple calculation shows that the numerator of \eqref{eq: wave_formula} can be written as
\begin{align*}
&\phantom{=}\integ{u}{\theta}[\theta+\tau]<(-u+S_{\theta,\tau}(u))S_{\theta,\tau}'(u)> \\
&= 1-(\theta+\tau)-\integ{u}{\theta}[\theta+\tau]<(-1+S_{\theta,\tau}'(u))S_{\theta,\tau}(u)> \\
&=1-(\theta+\tau)+\integ{u}{\theta}[\theta+\tau]<S_{\theta,\tau}(u)>-\f{2}\integ{u}{\theta}[\theta+\tau]<\left[S_{\theta,\tau}(u)^2\right] '> \\
&= \f{2} -(\theta+\tau)+\integ{u}{\theta}[\theta+\tau]<S_{\theta,\tau}(u)> \\
&=  \f{2} -\left(\theta+\f{\tau}{2}\right)
\end{align*}
by the assumption $\integ{u}{\theta}[\theta+\tau]<S_{\theta,\tau}(u)>=\f{\tau}{2}.$ The result follows immediately.
\end{proof}
\subsection{Wave Speed Bounds}
\label{subsec: Wave_Speed_Bounds}
\subsubsection{Proof of \cref{lemma: Global_Wave_Bounds} (i)}
We show that $\mu_{\tau}<v_\theta,$ where $v_\theta$ solves
$$
\integ{x}{-\infty}[0]<e^{\f{x}{v_\theta}}K(x)>=\f{2}-\theta.
$$
Define the auxiliary function $\Gamma(z,\mu)$ and its derivative by
\begin{align}
\Gamma(z,\mu)&=\f{\mu}\int_0^\infty \int_{-\infty}^0 e^{\f{x}{\mu}}K(x+z-y)\,\mathrm{d}x\mathrm{d}y, \\
\Gamma_z(z,\mu)&= \f{\mu}\integ{x}{-\infty}[0]<e^{\f{x}{\mu}}K(x+z)>,
\end{align}
for $z\in\R, \mu>0.$ Note that $\Gamma(-\infty,\mu)=0,\,\Gamma(\infty,\mu)=1,$ and in fact, $\Gamma$ is equivalent to the formal solution $V_{\ol{\theta}}(z)$ when $\mu=v_{\ol{\theta}}$ for some $\ol{\theta}\in (\theta,\f{2}),$ which is what we are trying to prove.

Suppose $(U_{\tau},\mu_{\tau})$ is a solution corresponding with firing rate $S_{\theta,\tau}.$ Then since $(U_{\tau},\mu_{\tau})$ solves
$$
\mu_{\tau} U_{\tau}' + U_{\tau}=\integ{y}{0}[\infty]<K(z-y)S_{\theta,\tau}(U_{\tau}(y))>
$$
and $S_{\theta,\tau}(U_{\tau}(y))=1$ for $y\geq \Delta_{\tau}(\tau),$ it follows that
\begin{align*}
U_{\tau}(z) &=\f{\mu_{\tau}}\int_0^\infty S_{\theta,\tau}(U_{\tau}(y))\left[\int_{-\infty}^0 e^{\f{x}{\mu_{\tau}}}K(x+z-y)\,\mathrm{d}x\right]\mathrm{d}y \\
&= \integ{y}{0}[\infty]<S_{\theta,\tau}(U_{\tau}(y))\Gamma_y(z-y,\mu_{\tau})> \\
&= \integ{y}{0}[\Delta_{\tau}(\tau)]<S_{\theta,\tau}(U_{\tau}(y))\Gamma_y(z-y,\mu_{\tau})>+\integ{y}{\Delta_{\tau}(\tau)}[\infty]<\Gamma_y(z-y,\mu_{\tau})> \\
&= \integ{y}{0}[\Delta_{\tau}(\tau)]<S_{\theta,\tau}(U_{\tau}(y))\Gamma_y(z-y,\mu_{\tau})> + \Gamma(z-\Delta_{\tau}(\tau),\mu_{\tau}).
\end{align*}
Plugging in $z=0,$ we have 
\begin{equation}\label{eq: theta_global}
\theta= \integ{y}{0}[\Delta_{\tau}(\tau)]<S_{\theta,\tau}(U_{\tau}(y))\Gamma_y(-y,\mu_{\tau})> + \Gamma(-\Delta_{\tau}(\tau),\mu_{\tau}).
\end{equation}
We consider the behavior of $\Gamma_y(-y,\mu_{\tau})$ for $y\in(0,\infty).$ By considering the function $h(y)=e^{\f{-y}{\mu_{\tau}}}\Gamma_y(-y,\mu_{\tau}),$ which has the same sign as $\Gamma_y(-y,\mu_{\tau}),$ we see
$$
h(y)=\f{\mu_{\tau}}\integ{x}{-\infty}[-y]<e^{\f{x}{\mu_{\tau}}}K(x)>.
$$
Recalling that $K(x)<0$ on $(-\infty,-M)$ and $K(x)>0$ on $(-M,0),$ it follows that $h(y)$ is decreasing on $(0,M)$ and increasing on $(M,\infty).$ Hence, prior to knowing information about $\Delta_{\tau}(\tau),$ we have three cases to consider.

The first case is that $\Gamma_y(-y,\mu_{\tau})<0$ for $y\in(0,\infty).$ We can rule this case out since if this were the case, then since $\Gamma(-\infty,\mu_{\tau})=0,$ this implies $\Gamma(-\Delta_{\tau}(\tau),\mu_{\tau})<0.$ Then both terms on the right hand side of \eqref{eq: theta_global} are clearly negative, which is a contradiction.

The second case is that there exists a critical point $z_c\in[\Delta_{\tau}(\tau),\infty)$
such that $\Gamma_y(-y,\mu_{\tau})<0$ for $y\in(z_c,\infty)$ and $\Gamma_y(-y,\mu_{\tau})>0$ for $y\in(0,z_c).$
Clearly, $\Gamma_y(-y,\mu_{\tau})>0$ for $y\in(0,\Delta_{\tau}(\tau)).$ Therefore, in \eqref{eq: theta_global}, we may use the bound $S_{\theta,\tau}\leq 1$ on the positive integrand. After integrating, we conclude that
\begin{align*}
\theta < \integ{y}{0}[\Delta_{\tau}(\tau)]<\Gamma_y(-y,\mu_{\tau})> + \Gamma(-\Delta_{\tau}(\tau),\mu_{\tau})=\Gamma(0,\mu_{\tau}).
\end{align*}
If this is a contradiction, then this case can be ruled out. If not, then since $V_\theta(0)=\theta,$ it follows that 
$$
\f{v_\theta} \int_0^\infty \int_{-\infty}^0 e^{\f{x}{v_\theta}}K(x-y)\,\mathrm{d}x\mathrm{d}y<\f{\mu_{\tau}} \int_0^\infty \int_{-\infty}^0 e^{\f{x}{\mu_{\tau}}}K(x-y)\,\mathrm{d}x\mathrm{d}y,
$$ 
or equivalently, after integrating by parts in $y,$ we have $\phi(\mu_{\tau})<\phi(v_\theta)=\f{2}-\theta,$ where $\phi$ is the speed index function from \eqref{eq: speed_index_heaviside}. Since $\phi$ is strictly increasing when $\phi\in(0,\f{2}),$ we conclude that $\mu_{\tau}<v_\theta.$

The third case is the same as case two, except this time, the critical point satisfies $z_c\in(0,\Delta_{\tau}(\tau)).$ Then in the integral in \eqref{eq: theta_global}, we use the fact that $S_{\theta,\tau}(U_{\tau}(y))$ is an increasing function for $y\in(0,\Delta_{\tau}(\tau)).$ On $(-z_c,0),$ we see $\Gamma_y(-y,\mu_{\tau})>0$ and on $(-\Delta_{\tau}(\tau),-z_c),$ we see $\Gamma_y(-y,\mu_{\tau})<0$  so on both intervals,
\begin{equation}\label{eq: S_increasing_bound}
S_{\theta,\tau}(U_{\tau}(y))\Gamma_y(-y,\mu_{\tau})<S_{\theta,\tau}(U_{\tau}(z_c))\Gamma_y(-y,\mu_{\tau}).
\end{equation}
It follows that
\begin{equation}\label{eq: theta_global_inequality}
\begin{split}
\theta&< S_{\theta,\tau}(U_{\tau}(z_c))\integ{y}{0}[\Delta_{\tau}(\tau)]<\Gamma_y(-y,\mu_{\tau})> + \Gamma(-\Delta_{\tau}(\tau),\mu_{\tau}) \\
&=S_{\theta,\tau}(U_{\tau}(z_c))\left[\Gamma(0,\mu_{\tau})-\Gamma(-\Delta_{\tau}(\tau),\mu_{\tau})\right] + \Gamma(-\Delta_{\tau}(\tau),\mu_{\tau}). 
\end{split}
\end{equation}
Obviously, $\Gamma(-\Delta_{\tau}(\tau),\mu_{\tau})<0$ so if $\Gamma(0,\mu_{\tau})<\Gamma(-\Delta_{\tau}(\tau),\mu_{\tau}),$ we arrive at a contradiction in \eqref{eq: theta_global_inequality}. Assuming $\Gamma(0,\mu_{\tau})\geq\Gamma(-\Delta_{\tau}(\tau),\mu_{\tau}),$ we may use the bound $S_{\theta,\tau}\leq 1$ to conclude that $\theta<\Gamma(0,\mu_{\tau}).$ Either this is a contradiction and this case is ruled out, or we apply similar analysis that was used in case two to conclude that $\mu_{\tau}<v_\theta.$

We have completed the proof of the first part of \cref{lemma: Global_Wave_Bounds}, showing that $\mu_{\tau}<v_\theta.$ 
\begin{corollary}\label{cor: speed_equiv}
Let $(U_{\tau},\mu_{\tau})$ be a traveling wave solution satisfying assumption (A2). Then there exists a unique $\ol{\theta}\in(\theta,\f{2})$ such that $\mu_{\tau}=v_{\ol{\theta}}$ and $\Gamma(z,\mu_{\tau})=V_{\ol{\theta}}(z).$
\end{corollary}
\begin{proof}
By \cref{lemma: Global_Wave_Bounds} (i), we know $\mu_{\tau}<v_\theta.$ Since $\phi$ is increasing when $\phi\in(0,\f{2})$ and $v_\theta$ satisfies $\phi(v_\theta)=\f{2}-\theta,$ there exists a unique $\ol{\theta}\in (\theta,\f{2})$ such that $\phi(\mu_{\tau})=\f{2}-\ol{\theta},$ which implies $\mu_{\tau}=v_{\ol{\theta}}.$ The fact that $\Gamma(z,\mu_{\tau})=V_{\ol{\theta}}(z)$ follows directly from the definitions.
\end{proof}

\subsubsection{Proof of \cref{lemma: Global_Wave_Bounds} (ii)}
We are trying to show that if $\tau<\f{2}-\theta,$ then $\mu_{\tau}>v_{\theta+\tau},$ where $v_{\theta+\tau}$ is the unique wave speed corresponding with firing rate $H(u-(\theta+\tau)).$ In principle, the proof technique is extremely similar to the last one; we outline the key differences.

First of all, now that we know $\mu_{\tau}<v_\theta,$ we are able to make stronger assertions regarding the shape of $\Gamma(z,\mu_{\tau}).$
By \cref{cor: speed_equiv}, there exists a unique $\ol{\theta}\in (\theta,\f{2})$ such that $\mu_{\tau}=v_{\ol{\theta}}.$ Then $\Gamma(z,\mu_{\tau})=V_{\ol{\theta}}(z),$ where the profile is understood from \cref{subsubsec: Review}.

Given the solution $(U_{\tau},\mu_{\tau}),$ this time translate so that $U_{\tau}^{-1}(\theta+\tau)=0.$ Since $U_{\tau}'>0$ when $U_{\tau}\in[\theta,\theta+\tau],$ we may denote $-\Delta_{\tau}(\tau)=U_{\tau}^{-1}(\theta)<0.$ Then by plugging in $z=0,$ we have 
\begin{equation}\label{eq: thetatau_global}
\begin{split}
\theta+\tau&=\integ{y}{-\Delta_{\tau}(\tau)}[0]<S_{\theta,\tau}(U_{\tau}(y))V_{\ol{\theta}}'(-y)> + V_{\ol{\theta}}(0) \\
&=\integ{y}{0}[\Delta_{\tau}(\tau)]<S_{\theta,\tau}(U_{\tau}(-y))V_{\ol{\theta}}'(y)> + \ol{\theta}.
\end{split}
\end{equation}
As outlined in \cref{thm: E_and_U_Zhang}, there are two possibilities based on the shape of $V_{\ol{\theta}}$ on the right half plane. The first is that $V_{\ol{\theta}}'(y)>0$ for $y\in(0,\Delta_{\tau}(\tau)).$ If this is the case, then the integral in \eqref{eq: thetatau_global} is positive so we may conclude that $\theta+\tau>\ol{\theta}.$ Since $\phi$ is increasing and $\phi(v_{\theta+\tau})=\f{2}-(\theta+\tau)$, $\phi(v_{\ol{\theta}})=\f{2}-\ol{\theta},$ it follows that $\mu_{\tau}>v_{\theta+\tau}.$

The second possibility is there exists a critical point $z_c\in(0,\Delta_{\tau}(\tau))$ such that $V_{\ol{\theta}}'(y)>0$ for $y\in(0,z_c)$ and $V_{\ol{\theta}}'(y)<0$ for $y\in(z_c,\Delta_{\tau}(\tau)).$ Since $S_{\theta,\tau}(U_{\tau}(-y))$ is a decreasing function for $y\in(0,\Delta_{\tau}(\tau)),$ it follows from similar reasoning to \eqref{eq: S_increasing_bound} that
\begin{equation}
\begin{split}
\theta+\tau&>S_{\theta,\tau}(U_{\tau}(-z_c))\integ{y}{0}[\Delta_{\tau}(\tau)]<V_{\ol{\theta}}'(y)> + \ol{\theta}\\
&= S_{\theta,\tau}(U_{\tau}(-z_c))\left[V_{\ol{\theta}}(\Delta_{\tau}(\tau))-V_{\ol{\theta}}(0)\right] + \ol{\theta}.
\end{split}
\end{equation}
Since $V_{\ol{\theta}}(z)>V_{\ol{\theta}}(0)=\ol{\theta}$ for $z\in(0,\infty),$ we may conclude again that $\theta+\tau>\ol{\theta}.$ The same analysis as in the first case shows that $\mu_{\tau}>v_{\theta+\tau}.$ This completes the proof of \cref{lemma: Global_Wave_Bounds} (ii).

\subsection{Inverse Bounds}
Our main goal in this subsection is to prove \cref{lemma: Global_Utau_Bounds}, establishing global bounds on the inverse function $\Delta_{\tau}(\xi)$ when $U_{\tau}\in[\theta,\theta+\tau].$

Fix a solution $(U_{\tau},\mu_{\tau})$ with the translation $U_{\tau}^{-1}(\theta)=0.$ If $\tau\in(0,\f{2}-\theta),$ then based on \cref{lemma: Global_Wave_Bounds}, we may assume $$
v_{\theta+\tau}<\mu_{\tau}<v_\theta.
$$
By \cref{cor: speed_equiv}, there exists a unique $\ol{\theta}\in(\theta,\theta+\tau)$ such that $\mu_{\tau}=v_{\ol{\theta}}$ and we may write
$$
U_{\tau}(z)=\integ{y}{0}[\infty]<S_{\theta,\tau}(U_{\tau}(y))V_{\ol{\theta}}'(z-y)>.
$$
Hence, for all $\xi \in (0,\tau],$
\begin{align*}
\xi&=U_{\tau}(\Delta_{\tau}(\xi))-U_{\tau}(0) \\
&=\integ{y}{0}[\infty]<S_{\theta,\tau}(U_{\tau}(y))\left[V_{\ol{\theta}}'(\Delta_{\tau}(\xi)-y)-V_{\ol{\theta}}'(-y)\right]> \\
&= \integ{y}{0}[\Delta_{\tau}(\tau)]<\left[S_{\theta,\tau}(U_{\tau}(y))\right]'\left[V_{\ol{\theta}}(\Delta_{\tau}(\xi)-y)-V_{\ol{\theta}}(-y)\right]> \numberthis \label{eq: inv_bound_main}
\end{align*}
after integrating by parts. Since $V_{\ol{\theta}}(z)<V_{\ol{\theta}}(0)$ for $z\in(-\infty,0)$ and $V_{\ol{\theta}}(z)>V_{\ol{\theta}}(0)$ for $z\in(0,\infty),$ we see that all integrand terms in \eqref{eq: inv_bound_main} are strictly positive. As a result, we may conclude 
\begin{align*}
\xi &> \left[\integ{y}{0}[\Delta_{\tau}(\tau)]<\left[S_{\theta,\tau}(U_{\tau}(y))\right]'>\right]\left[\min_{y\in [0,\Delta_{\tau}(\xi)]}(V_{\ol{\theta}}(\Delta_{\tau}(\xi)-y)-V_{\ol{\theta}}(-y))\right]\\
&= \min_{y\in [0,\Delta_{\tau}(\xi)]}(V_{\ol{\theta}}(\Delta_{\tau}(\xi)-y)-V_{\ol{\theta}}(-y)) \\
&> \min_{\substack{y \in [0,\Delta_{\tau}(\xi)] \\ \delta\in [\theta,\theta+\tau]}} (V_\delta(\Delta_{\tau}(\xi)-y)-V_\delta(-y)).
\end{align*}
A nearly identical argument shows 
$$
\xi< \max_{\substack{y \in [0,\Delta_{\tau}(\xi)] \\ \delta\in [\theta,\theta+\tau]}} (V_\delta(\Delta_{\tau}(\xi)-y)-V_\delta(-y)).
$$
Finally, if $\tau\in[\f{2}-\theta,1-2\theta],$ we can only assume $0\leq\mu_{\tau}<v_\theta,$ and therefore, $\ol{\theta}\in (\theta,\f{2}].$ With this adjustment, we apply the same argument, but the bounds become 
$$
\min_{\substack{y \in [0,\Delta_{\tau}(\xi)] \\ \delta\in [\theta,\f{2}]}} (V_\delta(\Delta_{\tau}(\xi)-y)-V_\delta(-y))<\xi<\max_{\substack{y \in [0,\Delta_{\tau}(\xi)] \\ \delta\in [\theta,\f{2}]}} (V_\delta(\Delta_{\tau}(\xi)-y)-V_\delta(-y)).
$$
This completes the proof of \cref{lemma: Global_Utau_Bounds}.

\section{Proof of Existence of Fronts} \label{sec: E_fronts}
Having established some global properties of the solutions, our goal in this section is to prove \cref{thm: continuation,thm: perturb}. The first part is dedicated to setting up the implicit function theorem, while the second part involves passing solutions through subsequential limits.
\subsection{Ambient Functional Spaces and Preliminaries}
In this section, our main tool is using continuous Gateaux derivatives in order to apply the implicit function theorem in Banach spaces. However, we note that the objects $(U_{\tau},\mu_{\tau},S_{\theta,\tau})$ do not perturb in the strict ``open ball" sense. In particular, since any fronts we obtain are translation invariant, it easily follows that we cannot truly use the implicit function theorem in its classical form. The workaround, first proven in \cite{ExistenceandUniqueness-ErmMcLeod} and also later used in \cite{Bates1997,ermentrout2010stimulus}, is to use properties of eigenfunctions to $L^2$ adjoint operators to fix the translation. The space that defines the translations is convex; Gateaux derivatives act on functions that are admissible in order to stay in the space.

Recall
\begin{equation} \tag{\ref{eq: mapping}}
 F[U,\mu,S_{\theta,\tau}](z):= \mu U'(z) + U(z) -\integ{y}{\mathbb{R}}[]<K(z-y)S_{\theta,\tau}(U(y))>,
\end{equation}
and a traveling wave solution solves $F\equiv 0.$ In all analysis, we will fix $\theta$ and restrict our domain to the ambient space $\mathcal{C}\times\R_+\times\{S_{\theta,\tau}\}_{\tau\geq 0},$ where $$\mathcal{C}=\{U\in C_b^2(\mathbb{R}): U(-\infty)=0,\, U(\infty)=1,\, U^{(j)}(\pm \infty)=0 \textnormal{ for } j=1,2. \}$$
is a convex subset of $C_b^2(\mathbb{R}).$ As we will see, the $U$ space will be refined further at perturbation steps. By the assumptions on $K,$ it follows that
\begin{align*}
%F'[U,\mu,S_{\theta,\tau}](z) &= \mu U'' + U' -\f{\mathrm{d}}{\mathrm{d}z}\left[\left(\int_{-\infty}^z  + \int_z^\infty\right) K(z-y)S_{\theta,\tau}(U(y)-\theta) \, \mathrm{d}y\right] \\
%&=\mu U'' + U' \\
%&-\left[\vphantom{\left(\int_{-\infty}^z  + \int_z^\infty\right) K'(z-y)S_{\theta,\tau}(U(y)-\theta) \, \mathrm{d}y} (K(0^+)-K(0^-))S_{\theta,\tau}(U(z)-\theta)\right. \\
%&+\left.\left(\int_{-\infty}^z  + \int_z^\infty\right) K'(z-y)S_{\theta,\tau}(U(y)-\theta) \, \mathrm{d}y  \right] \\
F'[U,\mu,S_{\theta,\tau}](z)&=\mu U'' + U'-\integ{y}{\mathbb{R}}[]<K'(z-y)S_{\theta,\tau}(U(y))> \\
&=\mu U'' + U'-\integ{y}{\mathbb{R}}[]<K'(y)S_{\theta,\tau}(U(z-y))>.
\end{align*}
Note that $S_{\theta,\tau}\leq 1$ and $K'\in L^1(\mathbb{R}).$ Therefore, for $(U,\mu,S_{\theta,\tau})\in \mathcal{C}\times\R_+\times\{S_{\theta,\tau}\}_{\tau\geq 0},$ we have $F[U,\mu,S_{\theta,\tau}]\in C_0^1(\mathbb{R})$ by dominated convergence theorem, even when $\tau=0.$ Here,
$$
(C^1_0(\mathbb{R}),\norm{\cdot}_{1,\infty})=\{f\in C^1(\mathbb{R}): f(\pm \infty)=f'(\pm \infty)=0\}
$$
with norm $\norm{f}_{1,\infty}=\norm{f}_\infty + \norm{f'}_\infty.$

When $\tau\geq0$, the goal is to continuously apply the implicit function theorem to $F\equiv0$ by setting up a Newton mapping over Banach spaces. We will focus on properties of Gateaux derivatives when $\tau>0,$ since delta distributions arise when $\tau=0.$ We remark that all of the techniques are designed to hold when delta distributions are treated formally, but since $S_{\theta,0}'(u)=\delta_\theta(u)$ only formally, we perform the analysis without invoking the delta distribution.

Since Gateaux derivatives require a direction of differentiation, we define the relevant Banach space.
\begin{definition}(\cite{accinelli2010generalization})
Let $\mathcal{C}\subset X$ be a convex subset of a Banach space $X$. We say that a vector $h\in X$ is {\it admissible} for $x\in \mathcal{C}$ if and only if $x+h\in \mathcal{C}$.
\end{definition}
It is easy to see that the Banach space
$$
(C^2_0(\mathbb{R}),\norm{\cdot}_{2,\infty})=\{f\in C^2(\mathbb{R}): f(\pm \infty)=f'(\pm \infty)=f''(\pm \infty)=0\},
$$
under the norm $\norm{f}_{2,\infty}=\norm{f}_\infty + \norm{f'}_\infty + \norm{f''}_\infty$, forms the admissible directions for $\mathcal{C}.$ Taking the Gateaux derivatives with respect to $U,$ we have $DF_U: C^2_0(\mathbb{R}) \to C^1_0(\mathbb{R})$ given by
\begin{align}
DF_U[U ,\mu, S_{\theta,\tau}](h_u)(z)=\mu h_u' + h_u - \integ{y}{\mathbb{R}}[]<K(z-y)S_{\theta,\tau}'(U(y))h_u(y)>.\label{eq: Frech_U}
\end{align}
We notice that $DF_U$ can be written as $DF_U=G_2-G_1,$ where
\begin{align}
G_1[U,\mu,S_{\theta,\tau}](h_u)(z) &:=\integ{y}{\mathbb{R}}[]<K(z-y)S_{\theta,\tau}'(U(y))h_u(y)>, \label{eq: G1_op}\\
G_2[U,\mu,S_{\theta,\tau}](h_u)(z) &:= \mu h_u' + h_u . \label{eq: G2_op}
\end{align}
Note that $\mu>0$ so $G_2$ is never the identity operator. Moreover, since $U\in\mathcal{C},$ we see that in \eqref{eq: G1_op}, the domain where $S_{\theta,\tau}'(U(y))>0$ is bounded so $G_1$ is the standard Hilbert-Schmidt integral operator. Using the assumptions on $K,$ it is easy to see that $G_1$ is compact in $C_0^2(\mathbb{R})\to C_0^1(\mathbb{R}).$  The operator $G_2$ is invertible with inverse 
\begin{equation}\label{eq: G2_inv_form}
G_2^{-1}[U,\mu,S_{\theta,\tau}](v)(z)=\f{\mu}\integ{x}{-\infty}[z]<e^{\f{x-z}{\mu}}v(x)>.
\end{equation}
Combined, it follows that $G_2^{-1}G_1: C_0^2(\mathbb{R})\to C_0^2(\mathbb{R})$ is compact. Note that there are no difficulties establishing that $DF_U$ is continuous in operator norm.
\subsection{Null Space of $DF_U$ at Solutions} \label{subsec: Null}
\subsubsection*{Pointwise in $\boldsymbol{C_0^2(\mathbb{R})}$}
Let $(U_{\tau},\mu_{\tau})$ be a traveling wave solution satisfying (A2) with the translation $U_{\tau}^{-1}(\theta)=0.$ Then by differentiating $F[U_{\tau},\mu_{\tau},S_{\theta,\tau}](z)=0$ with respect to $z,$ it is easy to see that $$DF_U[U_{\tau},\mu_{\tau},S_{\theta,\tau}](h_u)(z)=0$$ when $h_u=U_{\tau}'.$ Moreover, we are able to use the assumption $\Delta_{\tau}(\xi)\leq \sigma_1(\theta)$ to establish that $\lambda=0$ is a simple eigenvalue of $DF_U[U_{\tau},\mu_{\tau},S_{\theta,\tau}]$. First, we need \cref{lemma: r_positive,cor: r_positive} below, which along with helping to prove simplicity, also play instrumental roles in highlighting the purpose of $\sigma_1(\theta),$ which is to ensure that the implicit function theorem can be applied later on.
\begin{lemma}\label{lemma: r_positive}
The function 
$$
r(\xi,\mu)= \f{\mu}\integ{x}{-\infty}[0]<e^{\f{x}{\mu}}K(x+\xi)>
$$
is positive for $\xi\in[-\sigma_1(\theta),\sigma_1(\theta)]$ and $\mu\in(0,v_\theta).$
\end{lemma}
\begin{proof}
We first prove the claim when $\xi=-\sigma_1(\theta).$ Suppose $r(-\sigma_1(\theta),\mu_*)=0$ for some $\mu_*>0.$ Then
\begin{align*}
r_\mu(-\sigma_1(\theta),\mu_*)&=\f{\mu_*^3}\integ{x}{-\infty}[0]<|x|e^{\f{x}{\mu_*}}K(x-\sigma_1(\theta))> \\
&=\f{\mu_*^3}\left(\int_{-\infty}^{\sigma_1(\theta)-M}+\int_{\sigma_1(\theta)-M}^0 \right) |x|e^{\f{x}{\mu_*}}K(x-\sigma_1(\theta)) \, \mathrm{d}x \\
&< \f{|\sigma_1(\theta)-M|}{\mu_*^3}\left(\int_{-\infty}^{\sigma_1(\theta)-M}+\int_{\sigma_1(\theta)-M}^0 \right) e^{\f{x}{\mu_*}}K(x-\sigma_1(\theta)) \, \mathrm{d}x \\
&= \f{|\sigma_1(\theta)-M|}{\mu_*^2}\underbrace{r(-\sigma_1(\theta),\mu_*)}_{=0}.
\end{align*}
Therefore, the function $r(-\sigma_1(\theta),\cdot)$ has at most one positive root. Recalling the Heaviside solutions, since $r(-\sigma_1(\theta),v_\theta)=V_\theta '(-\sigma_1(\theta))=0$ by the definition of $\sigma_1(\theta),$ this implies $v_\theta$ is the unique root. Since $r_\mu(-\sigma_1(\theta),v_\theta)<0,$ clearly $r(-\sigma_1(\theta),\mu)>0$ for $\mu \in (0,v_\theta).$ To prove the claim for $\xi \in (-\sigma_1(\theta),\sigma_1(\theta)],$ we use a change of variable and write 
\begin{align*}
r(\xi,\mu)&=\f{e^{\f{-\xi}{\mu}}}{\mu}\integ{x}{-\infty}[\xi]<e^{\f{x}{\mu}}K(x)> \\
&> \f{e^{\f{-\xi}{\mu}}}{\mu}\integ{x}{-\infty}[-\sigma_1(\theta)]<e^{\f{x}{\mu}}K(x)>,
\end{align*}
which is positive since it has the same sign as $r(-\sigma_1(\theta),\mu).$ This completes the proof.
\end{proof}
Observing that $r(\xi,v_{\ol{\theta}})=V_{\ol{\theta}}'(\xi)$, we arrive at the following corollary.
\begin{corollary}\label{cor: r_positive}
Let $(U_{\tau},\mu_{\tau})$ be a traveling wave solution. By \cref{cor: speed_equiv}, we may find the unique $\ol{\theta}\in(\theta,\f{2}]$ such that $\mu_{\tau}=v_{\ol{\theta}}.$ Then the Heaviside solution satisfies $V_{\ol{\theta}}'(\xi)>0$ for $\xi\in[-\sigma_1(\theta),\sigma_1(\theta)].$
\end{corollary}
Using a technique similar to \cite[Theorem 4.2]{ExistenceandUniqueness-ErmMcLeod}, we prove that $\lambda=0$ is simple.
\begin{lemma}\label{lemma: zero_simple_eig}
For $\tau>0,$ suppose $F[U_{\tau},\mu_{\tau},S_{\theta,\tau}]\equiv 0$ and the solution $U_{\tau}$ satisfies $\Delta_{\tau}(\xi)\leq \sigma_1(\theta)$. Then $U_{\tau}'$ is the only eigenfunction of $DF_U$ when $\lambda=0.$
\end{lemma}
\begin{proof}
We notice that $h_u$ is an eigenfunction at $\lambda=0$ if and only if $(G_2-G_1)(h_u)=0$ if and only if $(G_2^{-1}G_1-\mathcal{I})(h_u)=0$ if and only if
\begin{equation}
h_u(z)=\int_0^{\Delta_{\tau}(\tau)}S_{\theta,\tau}'(U_{\tau}(y))h_u(y)\left[\f{\mu_{\tau}} \int_{-\infty}^0 e^{\f{x}{\mu_{\tau}}}K(z+x -y)\, \mathrm{d}x\right]\mathrm{d}y. \label{equation: eig_reduced}
\end{equation}
By \cref{cor: speed_equiv}, there exists a unique $\ol{\theta}\in(\theta,\f{2})$ such that $\mu_{\tau}=v_{\ol{\theta}}$ and \eqref{equation: eig_reduced} reduces to 
$$
h_u(z)=\int_0^{\Delta_{\tau}(\tau)}S_{\theta,\tau}'(U_{\tau}(y))V_{\ol{\theta}}'(z-y)h_u(y)\, \mathrm{d}y. 
$$
Consider if there was another eigenfunction, $\zeta(z)$. Then for any constant $c$, we have
\begin{equation}\label{eq: Uprime_zeta}
U_{\tau}'(z)+c\zeta(z)=\int_0^{\Delta_{\tau}(\tau)}S_{\theta,\tau}'(U_{\tau}(y))V_{\ol{\theta}}'(z-y)(U_{\tau}'(y)+c\zeta(y)) \, \mathrm{d}y. 
\end{equation}
Consider the domain of $z\in[0,\Delta_{\tau}(\tau)]\subset [0,\sigma_1(\theta)],$ where $U_{\tau}'(z)>0.$ Then we see that $z-y\in[-\sigma_1(\theta),\sigma_1(\theta)]$ so by \cref{cor: r_positive}, the terms $S_{\theta,\tau}'(U_{\tau}(y))V_{\ol{\theta}}'(z-y)$ are positive. Since $U_{\tau}'(y)>0$ in the  range of integration, there exists $c_0$ such that $U_{\tau}'(y)+c_0\zeta(y)\geq 0$, with the set of equality being nonempty. 

Let  $z_0$ be such a point where $U_{\tau}'(z_0)+c_0\zeta(z_0)= 0$. Plugging back into \eqref{eq: Uprime_zeta} at $z=z_0$, the left hand side is zero and the right hand side consists of integrals of nonnegative functions, positive on sets of positive measure. Obviously, the right hand side is positive if $U_{\tau}'\not\equiv -c_0 \zeta$ and we arrive at a contradiction. Hence, $U_{\tau}'\equiv -c_0 \zeta$. 

%The result holds when $\tau=0$ and $S_0'$ is the delta distribution. For if $\zeta(0)=0$, then $\zeta \equiv 0$. If $\zeta(0)\neq 0$, then by choosing $c_0=- \f{U_0'(0)}{\zeta(0)}$, the right hand side of \eqref{eq: Uprime_zeta} is always zero, forcing $U_0'\equiv - c_0 \zeta.$
\end{proof}
\subsubsection{Positive Eigenfunctions of the $L^2([0,\Delta_{\tau}(\tau)])$ Adjoint} \label{subsubsec: Adjoint}
In the proof of \cref{lemma: zero_simple_eig}, we saw that at solutions $(U_{\tau},\mu_{\tau}),$ there exists a unique $\ol{\theta}\in(\theta,\f{2})$ such that 
\begin{equation}\label{equation: G2inv_G1}
G_2^{-1}G_1(h)(z)=\int_0^{\Delta_{\tau}(\tau)}S_{\theta,\tau}'(U_{\tau}(y))V_{\ol{\theta}}'(z-y)h(y)\, \mathrm{d}y,
\end{equation}
and this operator has a simple eigenvalue at $\ol{\lambda}=1.$ Consider this same operator, but in a real Hilbert space setting from $L^2([0,\Delta_{\tau}(\tau)])\to L^2([0,\Delta_{\tau}(\tau)])$ with inner product
$$
\langle f, g \rangle = \int_0^{\Delta_{\tau}(\tau)} f(y) g(y)\, \mathrm{d}y.
$$
To avoid confusion between the action on Banach versus Hilbert spaces, call this operator $\mathcal{T_H}:L^2([0,\Delta_{\tau}(\tau)])\to L^2([0,\Delta_{\tau}(\tau)])$ given by 
$$
\mathcal{T}_H(h)(z)=\int_0^{\Delta_{\tau}(\tau)}S_{\theta,\tau}'(U_{\tau}(y))V_{\ol{\theta}}'(z-y)h(y)\, \mathrm{d}y.
$$
Firstly, we observe that $\mathcal{T_H}$ is still compact and $\mathcal{T_H}(h)(z)$ equals a pointwise continuous function almost everywhere. Secondly, since $C^2([0,\Delta_{\tau}(\tau)])\subset  L^2([0,\Delta_{\tau}(\tau)]),$ it also follows that $h=U_{\tau}'$ is still an eigenfunction of $\mathcal{T_H}$ associated with $\ol{\lambda}=1.$ Since $\mathcal{T_H}(h)(z)$ is continuous, we see that all solutions to $h(z)=\mathcal{T_H}(h)(z)$ have a continuous representation. By \cref{lemma: zero_simple_eig}, the only such continuous solution is $h=U_{\tau}'$. Therefore, $\ol{\lambda}=1$ is a simple eigenvalue of $\mathcal{T_H}$ so simplicity is preserved in $L^2([0,\Delta_{\tau}(\tau)])$.

%However, there is a fundamental difference when it comes to simplicity. Our argument for simplicity of $\ol{\lambda}=1$ in \cref{lemma: zero_simple_eig} was a pointwise argument that is no longer available to us in $L^2.$ One may suspect that it still holds true, but we need a technique beyond the scope of this work.

We now use a very similar method to \cite[Theorem 4.3]{ExistenceandUniqueness-ErmMcLeod} in order to prove that the solution to the $L^2$ adjoint equation is nonnegative almost everywhere. A standard calculation of the Hilbert-Schmidt integral operator shows the adjoint is given by
\begin{align} \label{equation: Hilbert_T_Adjoint}
\mathcal{T}_H^*(\psi)(z)=S_{\theta,\tau}'(U_{\tau}(z))\integ{y}{0}[\Delta_{\tau}(\tau)]<V_{\ol{\theta}}'(y-z)\psi(y)>.
\end{align}
\begin{lemma} \label{lemma: psi_tau_star_positive}
There exists exactly one solution $\ol{\psi}_{\tau}\in L^2([0,\Delta_{\tau}(\tau)])$ to $(\mathcal{T}_H^*-\mathcal{I})(\psi)=0.$ The solution is of one sign almost everywhere.
\end{lemma}
\begin{proof}
Since $\mathcal{T}_H$ is compact and $h=U_{\tau}'$ is the only solution to $(\mathcal{T}_H-\mathcal{I})(h)=0$, the existence of exactly one solution, $\ol{\psi}_{\tau},$ to $(\mathcal{T}_H^*-\mathcal{I})(\psi)=0,$ follows from the Fredholm alternative. 

We now show $\ol{\psi}_{\tau}$ is of one sign almost everywhere. Take any function from the $L^2$ equivalence class of functions such that almost everywhere,
$$
\ol{\psi}_{\tau}(z)=S_{\theta,\tau}'(U_{\tau}(z))\integ{y}{0}[\Delta_{\tau}(\tau)]<V_{\ol{\theta}}'(y-z)\ol{\psi}_{\tau}(y)>.
$$
Decompose $\ol{\psi}_{\tau}$ into positive and negative parts as $\ol{\psi}_{\tau}=\ol{\psi}_+-\ol{\psi}_-.$ Much like in \cite[Theorem 4.3]{ExistenceandUniqueness-ErmMcLeod}, we know that by \cref{cor: r_positive}, since $V_{\ol{\theta}}'(y-z)>0$ for $z\in[0,\Delta_{\tau}(\tau)]\subset [0,\sigma_1(\theta)],$ it easily follows that $(\mathcal{T}_H^*-\mathcal{I})(\ol{\psi}_+)\geq 0$ and $(\mathcal{T}_H^*-\mathcal{I})(-\ol{\psi}_-)\leq 0$ almost everywhere. Using the definition of adjoint,
\begin{equation} \label{eq: innerprod_pos}
0=\langle (\mathcal{T}_H-\mathcal{I})(U_{\tau}'),\psi\rangle =\langle U_{\tau}',(\mathcal{T}_H^*-\mathcal{I})(\psi)\rangle
\end{equation}
for any $\psi.$ Separately plugging in $\ol{\psi}_+$ or $\ol{\psi}_-$ into \eqref{eq: innerprod_pos}, we find that $\ol{\psi}_+ \equiv 0$ or $\ol{\psi}_- \equiv 0.$
\end{proof}
As we will see in our Newton mapping setup, our motivation for showing $\ol{\psi}_{\tau}$ to be (without loss of generality) nonnegative almost everywhere is that we need to guarantee that two important $L^2$ inner products are nonzero. The following lemma proves this fact.
\begin{lemma} \label{lemma: adjoint_Up_notzero}
For $\tau>0,$ let $(U_{\tau},\mu_{\tau})$ be a solution satisfying (A2) with $\Delta_{\tau}(\xi)\leq \sigma_1(\theta).$ Without loss of generality, let $\ol{\psi}_{\tau}\geq 0$ almost everywhere. Then
\begin{itemize}
\item[(i)] $\langle U_{\tau}',\ol{\psi}_{\tau}\rangle > 0,$
\item[(ii)] $\langle G_2^{-1}[U_{\tau},\mu_{\tau},S_{\theta,\tau}](U_{\tau}'),\ol{\psi}_{\tau}\rangle > 0,$
\end{itemize}
where inner products are with respect to $L^2([0,\Delta_{\tau}(\tau)]).$
\end{lemma}
\begin{proof}
The proof of (i) is trivial since both terms in the inner product are nonnegative. For (ii), it suffices to show $G_2^{-1}[U_{\tau},\mu_{\tau},S_{\theta,\tau}](U_{\tau}')(z)>0$ for $z\in[0,\Delta_{\tau}(\tau)].$ By definition,
\begin{align*}
G_2^{-1}[U_{\tau},\mu_{\tau},S_{\theta,\tau}](U_{\tau}')(z)&=\f{\mu_{\tau}}\integ{x}{-\infty}[z]<e^{\f{x-z}{\mu_{\tau}}}U_{\tau}'(x)> \\
&= \f{\mu_{\tau}}\left[U_{\tau}(z)-\f{\mu_{\tau}} \integ{x}{-\infty}[z]<e^{\f{x-z}{\mu_{\tau}}}U_{\tau}(x)> \right]
\end{align*}
after integrating by parts. Since $U_{\tau}(\cdot)<\theta$ on $(-\infty,0)$ and $U_{\tau}'(\cdot)>0$ on $[0,\Delta_{\tau}(\tau)],$ it must be true that for $z\in[0,\Delta_{\tau}(\tau)]$ fixed, we have $U_{\tau}(x)<U_{\tau}(z)$ on $(-\infty,z).$ Since exponentials preserve inequalities, it follows that above is greater than 
\begin{align*}
\f{\mu_{\tau}}\left[U_{\tau}(z)-\f{U_{\tau}(z)}{\mu_{\tau}}\integ{x}{-\infty}[z]<e^{\f{x-z}{\mu_{\tau}}}> \right]=0.
\end{align*}
\end{proof}
\subsection{Newton's Method in Banach Spaces}
With the exception of $\tau=0$ subtleties, we are in a position to show how to cleanly apply the implicit function theorem in Banach spaces to $F[U_{\tau},\mu_{\tau},S_{\theta,\tau}]=0$ in order to generate more solutions in the interval $[\tau,\ol{\tau}).$  Throughout this section, assume $U_{\tau}^{-1}(\theta)=0$ and $\Delta_{\tau}(\xi)\leq \sigma_1(\theta).$ Recall that our ambient $U$ space is the convex set
$$\mathcal{C}=\{U\in C_b^2(\mathbb{R}): U(-\infty)=0,\, U(\infty)=1,\, U^{(j)}(\pm \infty)=0 \textnormal{ for } j=1,2. \}.$$
We also note that any collection of firing rates $\{S_{\theta,\tau} \}_{\tau\geq 0}$ continuously deformed in $L^1$ by $\tau,$ and defined by \eqref{eq: smooth_heav}, forms a subset of $L^1(\mathcal{T}),$ where $\mathcal{T} \supset [0,1]$ is a large interval that contains all possible values of $U$ of interest.

Importantly, deforming the firing rates in $L^1$ is necessary. This is especially critical at the $\tau=0$ step since $\norm{S_{\theta,\tau}-S_{\theta,0}}_1 \to 0$ as $\tau\to 0,$ while this is untrue in the $L^\infty$ norm due to the Heaviside function having a jump. Indeed, using the $L^1$ norm is how we overcome the difficulty of the delta distribution arising when $\tau=0.$
\subsubsection*{When $\tau>0$}
At our $\tau>0$ perturbations, we fix the translation invariance problem by reducing $\mathcal{C}$ to the convex subset
\begin{align}
E(U_{\tau}, \mu_{\tau},\ol{\psi}_{\tau})=\{U\in \mathcal{C}: \langle U-U_{\tau},\ol{\psi}_{\tau}\rangle=0\},\label{eq: E_tau}
\end{align}
where the inner product is over $L^2([0,\Delta_{\tau}(\tau)]).$ 
Here $\ol{\psi}_{\tau}$ is the solution to $\mathcal{T}_H^*(\psi)=\psi,$ with all properties as described in \cref{subsubsec: Adjoint}.
The space of admissible directions,
\begin{align}
%A_0(U_0,\mu_0) &:=\{ \ol{U}_1-\ol{U}_2: \ol{U}_1,\, \ol{U}_2 \in E_0(U_0) \} \label{eq: admissible_A0} \\
A(U_{\tau},\mu_{\tau},\ol{\psi}_{\tau}) &:=\{ U_1-U_2: U_1,\, U_2 \in E(U_{\tau}, \mu_{\tau},\ol{\psi}_{\tau})\}, \label{eq: admissible_Atau} 
\end{align}
forms a closed subspace of the Banach space $(C^2_0(\mathbb{R}),\norm{\cdot}_{2,\infty}).$ This setup is similar to that in \cite{ExistenceandUniqueness-ErmMcLeod}, but with different assumptions on the firing rate and kernel.
\subsubsection*{When $\tau=0$}
In principle, the spaces when $\tau=0$ are derived similarly to those when $\tau>0.$ However, in \cref{subsec: Null}, due to the fact that $S_{\theta,0}'=\delta_\theta$ in distribution, the formal representation of the Gateaux derivative at solutions $(U_{0},\mu_{0},S_{\theta,0})=(V_\theta,v_\theta,H(u-\theta))$ takes on the form
\begin{equation}
G_2^{-1}G_1(h)(z)=\f{V_\theta'(z)h(0)}{V_\theta'(0)}.
\end{equation}
This linearization is well-known and widely used to derive the Evan's function \cite{CoombesOwen_Evans, Zhang-OnStability} in order to study the stability of Heaviside solutions. However, in a technical sense, our techniques break down since this mapping is not well-defined on $L^2.$  Moreover, by using the form of \eqref{equation: Hilbert_T_Adjoint}, the $L^2$ adjoint operator $\mathcal{T}_H^*$ formally becomes
$$
\mathcal{T}_H^*(\psi)(z)=\delta(V_\theta(z)-\theta)\integ{y}{\mathbb{R}}[]<V_\theta(y-z)\psi(y)>.
$$
On intuition alone, to arrange for $U\in \mathcal{C}$ such that $\langle U-V_\theta,\ol{\psi}_{\theta,0} \rangle = 0,$ it seems appropriate to demand $U(0)=V_\theta(0)=\theta,$ which is precisely what we do. Hence, we reduce to the convex subset
\begin{equation}
E_\theta=\{U\in \mathcal{C}: U(0)=V_\theta(0)=\theta\}.
\end{equation}
The Banach space of admissible directions is then
\begin{equation}
A_\theta=\{U_1-U_2: U_1,\, U_2 \in E_\theta\}=\{f\in C_0^2(\mathbb{R}): f(0)=0\}.
\end{equation}
\subsubsection*{Continuous Extension of Gateaux Derivatives When $\tau=0$}
In order to correctly apply the implicit function theorem on convex sets, we must show near $(U_{\tau},\mu_{\tau},S_{\theta,\tau})$, the partial Gateaux derivatives in all admissible directions are continuous in operator norm and, since $F[U,\mu,S_{\theta,\tau}]\in C_0^1(\mathbb{R})$ for $U\in\mathcal{C},$ the mapping $DF_{U,\mu}[U_{\tau},\mu_{\tau},S_{\theta,\tau}]: A(U_{\tau},\mu_{\tau}) \times \mathbb{R} \to C^1_0(\mathbb{R})$ is a Banach space isomorphism. 

Since $S_{\theta,\tau}$ is smooth for $\tau>0$, the continuity of Gateaux derivatives is trivial in this case. Moreover, taking the Gateaux derivative with respect to $\mu,$ we see that $$DF_\mu[U,\mu,S_{\theta,\tau}](h_\mu)(z)=h_\mu U'(z),$$ which does not cause any issues involving the delta distribution. Finally, the mapping
$$
(U,\mu,S_{\theta,\tau})\mapsto F[U,\mu,S_{\theta,\tau}],
$$
with respect to the norms $\norm{\cdot}_{2,\infty} + |\cdot| + \norm{\cdot}_1\to \norm{\cdot}_{1,\infty}$, poses no continuity issues since the delta distribution does not arise.

The only continuity issue that we have to deal with is the mapping 
$$
(U,\mu,S_{\theta,\tau})\mapsto DF_U[U,\mu,S_{\theta,\tau}]
$$
for $(U,\mu,S_{\theta,\tau})\in E_\theta \times \mathbb{R}_+\times \{S_{\theta,\tau}\}_{\tau\geq 0}$ in a neighborhood (with respect to admissible directions) of $(U_{0},\mu_{0},S_{\theta,0})=(V_\theta,v_\theta,H(u-\theta)).$ In particular, we need to continuously extend the mapping to the $\tau=0$ case. Formally, when $\tau=0,$ the Gateaux derivative for $U\in E_\theta$ in the direction of $A_\theta$ is given by 
\begin{equation}
DF_U[U,\mu,S_{\theta,0}](h_u)(z) = \mu h_u' + h_u -\f{K(z)h_u(0)}{U'(0)}.
\end{equation}
Note that since $U$ is in a neighborhood of $V_\theta$ in the $\norm{\cdot}_{2,\infty}$ norm, and $V_\theta(0)=\theta,$ $V_\theta' > 0$ when $V_\theta \in [0,1],$ there are no issues with assuming $U'(0)>0$ since $U\in E_\theta$ guarantees that $U(0)=V_\theta(0)=\theta.$ Moreover, since $h_u\in A_\theta,$ it follows that $h_u(0)=0$ so the last term vanishes. 

With this observation in mind, define the following extension of the Gateaux derivative, $\ol{DF}_U: E_\theta \times \mathbb{R}_+\times \{S_{\theta,\tau}\}_{\tau\geq 0}\to \mathcal{B}(A_\theta,C_0^1(\mathbb{R}))$, where the image is the space of bounded linear transformations from the Banach space $A_\theta$ to the Banach space $C_0^1(\mathbb{R}).$
\begin{equation}
\ol{DF}_U[U,\mu,S_{\theta,\tau}](h_u)=\begin{cases}
DF_U[U,\mu,S_{\theta,\tau}](h_u),& \tau>0 \\
\mu h_u' + h_u,& \tau=0
\end{cases}
\end{equation}
Similarly, we will use the notation $\ol{DF}_{U,\mu}[U,\mu,S_{\theta,\tau}]=\ol{DF}_{U}[U,\mu,S_{\theta,\tau}]+ DF_{\mu}[U,\mu,S_{\theta,\tau}].$
\begin{lemma}\label{lemma: Gat_cont}
When $\tau\geq 0,$ the mapping $\ol{DF}_U$ is continuous at every point with respect to operator norm.
\end{lemma}
\begin{proof}
The proof is trivial when the point $(U_*,\mu_*,S_{\theta,\tau})$ satisfies $\tau>0.$ Hence, assume the point is of the form $(U_*,\mu_*,S_{\theta,0}).$ Let $h_u\in A_\theta$ with $\norm{h_u}_{2,\infty}=1.$ For all nearby $(U,\mu,S_{\theta,\tau})$ and all $z,$ 
\begin{align*}
&|\ol{DF}_U[U,\mu,S_{\theta,\tau}](h_u)(z)-\ol{DF}_U[U_*,\mu_*,S_{\theta,0}](h_u)(z)| \\
&\leq |\ol{DF}_U[U,\mu,S_{\theta,\tau}](h_u)(z)-\ol{DF}_U[U_*,\mu,S_{\theta,\tau}](h_u)(z)| \numberthis \label{eq: A.1} \\
&+ |\ol{DF}_U[U_*,\mu,S_{\theta,\tau}](h_u)(z)-\ol{DF}_U[U_*,\mu_*,S_{\theta,\tau}](h_u)(z)| \numberthis \label{eq: A.2}\\
&+|\ol{DF}_U[U_*,\mu_*,S_{\theta,\tau}](h_u)(z)-\ol{DF}_U[U_*,\mu_*,S_{\theta,0}](h_u)(z)| \numberthis \label{eq: A.3}
\end{align*}
If $\tau>0,$ term \eqref{eq: A.1} reduces to 
\begin{align*}
&\left|\integ{y}{\mathbb{R}}[]<K(z-y)(S_{\theta,\tau}'(U(y))-S_{\theta,\tau}'(U_*(y)))h_u(y)>\right| \\
&\leq \max |S_{\theta,\tau}''(\cdot)|\norm{K}_1  \norm{h}_{2,\infty} \norm{U-U_*}_{2,\infty} \longrightarrow 0
\end{align*}
as $\norm{U-U_*}_{2,\infty} \longrightarrow 0$ independent of $h$ and $z$. If $\tau=0,$ then term \eqref{eq: A.1} reduces to 
$$
|(\mu h_u' + h_u)-(\mu h_u' + h_u)|=0.
$$
In \eqref{eq: A.2}, for $\tau\geq 0,$ the term reduces to
$$
|h_u'(\mu-\mu_*)| \leq |\mu-\mu_*| \longrightarrow 0
$$
as $|\mu-\mu_*|\longrightarrow 0,$ independent of $h_u.$ Finally, term \eqref{eq: A.3} vanishes if $\tau=0$ and if $\tau>0,$ it reduces to
\begin{align} \label{eq: Third_cont_term}
\left|\integ{y}{\mathbb{R}}[]<K(z-y)S_{\theta,\tau}'(U_*(y))h_u(y)>\right|.
\end{align}
Since $U_*\in E_\theta$, the quantities $\norm{U_*-V_\theta}_{2,\infty}$ and $\tau>0$ are small, and the shape of $V_\theta$ guarantees that $U_* ' >0$ when $U_* \in [\theta,\theta+\tau].$ Therefore, we may assume that $U_*$ is invertible (with positive inverse) when $U_* \in [\theta,\theta+\tau].$ Hence, we may rewrite \eqref{eq: Third_cont_term} as 
\begin{align*}
&\phantom{=}\left|\integ{\xi}{0}[\tau]<\f{K(z-U_*^{-1}(\theta+\xi))h_u(U_*^{-1}(\theta+\xi))}{U_* '(U_*^{-1}(\theta+\xi))} S_{\theta,\tau}'(\theta+\xi)> \right| \\
&\stackrel{\textnormal{IBP}}{=}\left|\left[\f{K(z-U_*^{-1}(\theta+\tau))h_u(U_*^{-1}(\theta+\tau))}{U_* '(U_*^{-1}(\theta+\tau))}\right]\right. \\
&-\left.\integ{\xi}{0}[\tau]<\left[\f{K(z-U_*^{-1}(\theta+\xi))h_u(U_*^{-1}(\theta+\xi))}{U_* '(U_*^{-1}(\theta+\xi))}\right]' S_{\theta,\tau}(\theta+\xi)>\right|, \\
\intertext{and since $h_u(U_*^{-1}(\theta))=h_u(0)=0$, the previous term can be written as}
&\phantom{=}\left|\integ{\xi}{0}[\tau]<\left[\f{K(z-U_*^{-1}(\theta+\xi))h_u(U_*^{-1}(\theta+\xi))}{U_* '(U_*^{-1}(\theta+\xi))}\right]'H(\xi)>\right. \\
&-\left.\integ{\xi}{0}[\tau]<\left[\f{K(z-U_*^{-1}(\theta+\xi))h_u(U_*^{-1}(\theta+\xi))}{U_* '(U_*^{-1}(\theta+\xi))}\right]' S_{\theta,\tau}(\theta+\xi)>\right| \\
&= \left|\integ{\xi}{0}[\tau]<\left[\f{K(z-U_*^{-1}(\theta+\xi))h_u(U_*^{-1}(\theta+\xi))}{U_* '(U_*^{-1}(\theta+\xi))}\right]' (\underbrace{S_{\theta,0}(\theta+\xi)}_{=H(\xi)}-S_{\theta,\tau}(\theta+\xi))>\right| \\
&\leq C \norm{S_{\theta,\tau}-S_{\theta,0}}_{L^1([\theta,\theta+\tau])} \longrightarrow 0
\end{align*}
as $\tau\to 0^+$. The constant $C>0$ can be chosen independent of $z$ and the choice of $h_u$ such that $\norm{h_u}_{2,\infty}=1.$ Moreover, the smallness of $\norm{U_*-V_\theta}_{2,\infty}$ guarantees that $U_*'(U_*^{-1}(\theta+\xi))$ is bounded below so $C$ does not blow up.

Combined, we have shown that $\norm{\ol{DF}_U[U,\mu,S_{\theta,\tau}]-\ol{DF}_U[U_*,\mu_*,S_{\theta,0}]} \longrightarrow 0$ in operator norm as $\norm{U-U_*}_{2,\infty} + |\mu-\mu_*| + \norm{S_{\theta,\tau}-S_{\theta,0}}_1 \longrightarrow 0.$ Hence, $\ol{DF}_U$ is a continuous extension of $DF_U.$
\end{proof}
In conclusion, all Gateaux derivatives are continuous in all relevant functional spaces.
\subsubsection*{Invertibility of Gateaux Derivatives at Solutions}
Following the standard proof of the implicit function theorem using Newton's method, we must show that at solutions satisfying (A2) with $\Delta_{\tau}(\xi)\leq \sigma(\theta),$ the bounded linear operator $DF_{U,\mu}^{-1}[U_{\tau},\mu_{\tau},S_{\theta,\tau}]:C_0^1(\mathbb{R})\to A(U_{\tau},\mu_{\tau},\ol{\psi}_{\tau})\times \mathbb{R}$ exists. Since $DF_{U,\mu}[U_{\tau},\mu_{\tau},S_{\theta,\tau}]: A(U_{\tau},\mu_{\tau},\ol{\psi}_{\tau})\times \mathbb{R}\to C_0^1(\mathbb{R})$ is a mapping between Banach spaces, we show that this forms a Banach space isomorphism. Then by the open mapping theorem, a bounded inverse exists.

Related to the technique in \cite[Theorem 4.4]{ExistenceandUniqueness-ErmMcLeod}, the following lemma precisely defines our Banach space isomorphisms at each perturbation.
\begin{lemma}\label{lemma: invertible_linmap}
At solutions $(U_{\tau},\mu_{\tau},S_{\theta,\tau})$ satisfying assumption (A2) with $\Delta_{\tau}(\xi)\leq \sigma(\theta),$ the Gateaux derivative $\ol{DF}_{U,\mu}[U_{\tau},\mu_{\tau},S_{\theta,\tau}]: A(U_{\tau},\mu_{\tau},\ol{\psi}_{\tau}) \times \mathbb{R} \to C_0^1(\mathbb{R})$ forms a Banach space isomorphism.
\end{lemma}
\begin{proof}
Let $g\in C_0^1(\mathbb{R}).$ We start with the (unique) $\tau=0$ case, where $(U_{0},\mu_{0},S_{\theta,0})=(V_\theta,v_\theta,H(u-\theta)).$  We wish to solve for $(h_u(z),h_\mu)$ in the equation 
$$
\ol{DF}_U[U_{0},\mu_{0},S_{\theta,0}](h_u)(z) + DF_\mu[U_{0},\mu_{0},S_{\theta,0}](h_\mu)(z) = g(z),$$
or equivalently,
\begin{equation}
\mu_{0}h_u'(z) + h_u(z) + h_\mu U_{0}'(z) =g(z).
\end{equation}
In terms of our bounded linear operators, this can be written as 
$$
G_2[U_{0},\mu_{0},S_{\theta,0}](h_u)(z)=g(z)-h_\mu U_{0}'(z).
$$
Since $G_2:C_0^2(\mathbb{R})\to C_0^1(\mathbb{R})$ is invertible (see \eqref{eq: G2_inv_form}), this is equivalent to 
\begin{equation}\label{eq: G2_inv_iso}
h_u(z)=G_2^{-1}(g-h_\mu U_{0}')(z).
\end{equation}
The condition $h_u\in A_\theta$ forces $h_u(0)=0.$ Hence, plugging in $z=0,$ we have the formula
\begin{equation}\label{eq: mu_form_iso}
h_\mu=\f{G_2^{-1}(g)(0)}{G_2^{-1}(U_{0}')(0)},
\end{equation}
where the denominator is positive by the proof of \cref{lemma: adjoint_Up_notzero} (ii). Plugging in \eqref{eq: mu_form_iso} into \eqref{eq: G2_inv_iso}, we have a unique solution $h_u \in A_\theta$ since $G_2$ is invertible.

At solutions when $\tau>0,$ the analogous equation becomes
\begin{equation}\label{eq: iso_Fredholm}
(G_2^{-1}G_1-\mathcal{I})(h_u)(z)=-G_2^{-1}(g-h_\mu U_{\tau}').
\end{equation}
Let $\ol{\psi}_{\tau}$ be the adjoint solution chosen for the space $A(U_{\tau},\mu_{\tau},\ol{\psi}_{\tau}).$ Then since $G_2^{-1}G_1$ is just $\mathcal{T}_H$, but applied to $C_0^2(\mathbb{R})$ functions, we take the $L^2([0,\Delta_{\tau}(\tau)])$ inner product of both sides with $\ol{\psi}_{\tau},$ and by definition of adjoint, we have the formula for $h_\mu$ given by
\begin{equation}
h_\mu=\f{\langle G_2^{-1}(g),\ol{\psi}_{\tau} \rangle}{\langle G_2^{-1}(U_{\tau}'),\ol{\psi}_{\tau} \rangle}.
\end{equation}
The denominator is positive by \cref{lemma: adjoint_Up_notzero} (ii). With this choice of $h_\mu,$ we are now in a position to apply the Fredholm alternative in \eqref{eq: iso_Fredholm}. As was shown in 
\cref{lemma: zero_simple_eig}, the only solution to 
\begin{equation}\label{eq: Fred_null}
(G_2^{-1}G_1-\mathcal{I})(h_u)(z)=0
\end{equation}
is $h_u=cU_{\tau}'.$ By \cref{lemma: adjoint_Up_notzero} (i), the only way $cU_{\tau}' \in A(U_{\tau},\mu_{\tau},\ol{\psi}_{\tau})$ can occur is if $c=0.$ Hence, there are no nontrivial solutions to \eqref{eq: Fred_null}. Since $G_2^{-1}G_1: A(U_{\tau},\mu_{\tau},\ol{\psi}_{\tau})\to A(U_{\tau},\mu_{\tau},\ol{\psi}_{\tau})$ is compact, the Fredholm alternatives implies \eqref{eq: iso_Fredholm} has a unique solution $h_u.$

For $\tau\geq 0,$ we have established bijections between Banach spaces. By applying the open mapping theorem, we see that these are Banach space isomorphisms in that bounded inverse operators exists.
\end{proof}
Now that we have carefully defined our spaces, we may easily apply the implicit function theorem.
\begin{lemma}[Implicit Function Theorem]\label{lemma: Implicit_smooth}
For fixed $\tau\geq 0,$ let $(U_{\tau},\mu_{\tau},S_{\theta,\tau})$ be a solution satisfying assumption (A2) with $\Delta_{\tau}(\xi)\leq \sigma(\theta).$ Then there exists $\epsilon_0>0$ and $\delta_0>0$ sufficiently small such that for $(\ol{U},\ol{\mu},S_{\theta,\ol{\tau}})\in E(U_{\tau},\mu_{\tau},\ol{\psi}_{\tau}) \times \mathbb{R}_+ \times \{S_{\theta,\ol{\tau}}\}_{\ol{\tau}\geq \tau}$ with $\norm{\ol{U}-U_{\tau}}_{2,\infty} + |\ol{\mu}-\mu_{\tau}| <\epsilon_0$ and $\norm{S_{\theta,\ol{\tau}}-S_{\theta,\tau}}_1<\delta_0,$ the mapping $\mathcal{N}: E(U_{\tau},\mu_{\tau},\ol{\psi}_{\tau}) \times \mathbb{R}_+ \times \{S_{\theta,\ol{\tau}}\}_{\ol{\tau}\geq \tau} \to E(U_{\tau},\mu_{\tau},\ol{\psi}_{\tau}) \times \mathbb{R}_+$ defined by
\begin{equation}
\mathcal{N}[\ol{U},\ol{\mu},S_{\theta,\ol{\tau}}]:=(\ol{U}, \ol{\mu}) - DF_{U,\mu}^{-1}[U_{\tau},\mu_{\tau},S_{\theta,\tau}](F[\ol{U},\ol{\mu},S_{\theta,\ol{\tau}}])
\end{equation}
has a unique fixed point $\mathcal{N}(U_{\ol{\tau}},\mu_{\ol{\tau}},S_{\theta,\ol{\tau}})=(U_{\ol{\tau}},\mu_{\ol{\tau}})$. In particular, we may write $$F[U(S_{\theta,\ol{\tau}}),\mu(S_{\theta,\ol{\tau}}),S_{\theta,\ol{\tau}}]=0$$ for $\norm{S_{\theta,\ol{\tau}}-S_{\theta,\tau}}_1<\delta_0,$ with $(U(S_{\theta,\ol{\tau}}),\mu(S_{\theta,\ol{\tau}}))$ continuous in the norm $\norm{\cdot}_{2,\infty} + |\cdot|$ with respect to changes in $S_{\theta,\ol{\tau}}$ in the norm $\norm{\cdot}_1.$
\end{lemma}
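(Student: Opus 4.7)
The plan is to deploy a contraction-mapping argument in the spirit of the classical proof of the implicit function theorem, adapted to the convex set $E(U_\tau,\mu_\tau)$. The mapping $\mathcal{N}$ is chosen precisely so that, by Lemma \ref{lemma: invertible_linmap}, fixed points correspond bijectively to triples with $F[\,\cdot\,,\,\cdot\,,S_{\ol{\tau}}]=0$. I would first verify that $\mathcal{N}$ is well-defined as a map into $E(U_\tau,\mu_\tau)\times\R$. The output of $DF_{U,\mu}^{-1}[U_\tau,\mu_\tau,S_\tau]$ lies in $A(U_\tau,\mu_\tau)\times\R$, and by the very definition of the admissible space $A$, subtracting such a pair from an element of $E\times\R$ again lies in $E\times\R$. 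Lemma \ref{lemma: F_in_Czeroone} guarantees the argument $F[\ol{U},\ol{\mu},S_{\ol{\tau}}]$ lives in the domain $C^1_0(\R)$ of the inverse, so the composition is meaningful.

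For contraction, I would take two admissible points $(\ol{U}_1,\ol{\mu}_1),(\ol{U}_2,\ol{\mu}_2)\in E(U_\tau,\mu_\tau)\times\R$ in an $\epsilon_0$-ball about $(U_\tau,\mu_\tau)$ and write, via the fundamental theorem of calculus along the segment joining them,
\[
\mathcal{N}[\ol{U}_1,\ol{\mu}_1,S_{\ol{\tau}}]-\mathcal{N}[\ol{U}_2,\ol{\mu}_2,S_{\ol{\tau}}]
=DF_{U,\mu}^{-1}[U_\tau,\mu_\tau,S_\tau]\!\int_0^1\!\bigl(DF_{U,\mu}[U_\tau,\mu_\tau,S_\tau]-DF_{U,\mu}[\ol{U}_t,\ol{\mu}_t,S_{\ol{\tau}}]\bigr)\,\mathrm{d}t\,(\Delta\ol{U},\Delta\ol{\mu}),
\]
where $(\ol{U}_t,\ol{\mu}_t)$ is the interpolant. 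Lemma \ref{lemma: Frech_cont}(i)-(ii) gives operator-norm continuity of the Fréchet derivative jointly in $(U,\mu,S_\tau)$ with respect to $\|\cdot\|_{2,\infty}+|\cdot|+\|\cdot\|_1$, while Lemma \ref{lemma: invertible_linmap} supplies a uniform bound on $DF_{U,\mu}^{-1}[U_\tau,\mu_\tau,S_\tau]$. Choosing $\epsilon_0,\delta_0$ small enough makes the bracketed integrand small in operator norm, giving a contraction factor below, say, $\tfrac{1}{2}$.

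For the self-mapping property, I would decompose
\[
F[\ol{U},\ol{\mu},S_{\ol{\tau}}]=\bigl(F[\ol{U},\ol{\mu},S_\tau]-F[U_\tau,\mu_\tau,S_\tau]\bigr)+\bigl(F[\ol{U},\ol{\mu},S_{\ol{\tau}}]-F[\ol{U},\ol{\mu},S_\tau]\bigr),
\]
using $F[U_\tau,\mu_\tau,S_\tau]\equiv 0$. Applying $DF_{U,\mu}^{-1}[U_\tau,\mu_\tau,S_\tau]$ and subtracting $(\ol{U}-U_\tau,\ol{\mu}-\mu_\tau)$, the first bracket produces a term of order $o(\epsilon_0)$ thanks to Fréchet differentiability (the linear part cancels exactly with $(\ol{U}-U_\tau,\ol{\mu}-\mu_\tau)$), and the second is of order $O(\delta_0)$ by Lemma \ref{lemma: Frech_cont}(iii). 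Shrinking $\epsilon_0,\delta_0$ if necessary, $\mathcal{N}$ maps the closed $\epsilon_0$-ball in $E(U_\tau,\mu_\tau)\times\R$ into itself.

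The Banach fixed point theorem then delivers a unique fixed point $(U_{\ol{\tau}},\mu_{\ol{\tau}})$, which by the bijectivity in Lemma \ref{lemma: invertible_linmap} satisfies $F[U_{\ol{\tau}},\mu_{\ol{\tau}},S_{\ol{\tau}}]=0$. Continuity of $S_{\ol{\tau}}\mapsto(U_{\ol{\tau}},\mu_{\ol{\tau}})$ in the stated norms is the standard parameter-continuity corollary of the contraction argument: the Lipschitz dependence of $\mathcal{N}$ on $S_{\ol{\tau}}$ via $\|\cdot\|_1$ (again from Lemma \ref{lemma: Frech_cont}(iii)) combined with the uniform contraction constant yields $\|(U_{\ol{\tau}_1},\mu_{\ol{\tau}_1})-(U_{\ol{\tau}_2},\mu_{\ol{\tau}_2})\|_{2,\infty}+|\mu_{\ol{\tau}_1}-\mu_{\ol{\tau}_2}|\lesssim \|S_{\ol{\tau}_1}-S_{\ol{\tau}_2}\|_1$. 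The main obstacle I anticipate is bookkeeping around the convex-set geometry: verifying carefully that every intermediate object respects membership in $E(U_\tau,\mu_\tau)$ (so that the partial Gateaux derivatives of Lemma \ref{lemma: Frech_cont} are evaluated in admissible directions), and handling the case $\tau=0$ where $S_\tau$ is merely Heaviside, so that the continuity in Lemma \ref{lemma: Frech_cont}(i) near a discontinuous firing rate must be invoked in its subtler form.
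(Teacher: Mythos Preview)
Your proposal is correct and follows essentially the same approach as the paper's proof: a Banach fixed-point argument in which the contraction estimate relies on the operator-norm continuity of $DF_{U,\mu}$ from Lemma~\ref{lemma: Frech_cont}(i)--(ii) together with the bound on $DF_{U,\mu}^{-1}$ from Lemma~\ref{lemma: invertible_linmap}, and the self-mapping property is obtained from a triangle-inequality decomposition invoking Lemma~\ref{lemma: Frech_cont}(iii). Your integral mean-value representation for the contraction step and your decomposition of $F[\ol{U},\ol{\mu},S_{\ol{\tau}}]$ (varying $S$ last rather than first) are harmless stylistic variants of the paper's direct estimates; the paper instead writes $F[\ol{U}_2,\ol{\mu}_2,S_{\ol{\tau}}]-F[\ol{U}_1,\ol{\mu}_1,S_{\ol{\tau}}]=DF_{U,\mu}[\ol{U}_1,\ol{\mu}_1,S_{\ol{\tau}}](\Delta\ol{U},\Delta\ol{\mu})+o(\norm{(\Delta\ol{U},\Delta\ol{\mu})})$ and splits the self-mapping bound via the intermediate point $\mathcal{N}[U_\tau,\mu_\tau,S_{\ol{\tau}}]$.
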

\begin{proof}
The proof is a standard application of Banach fixed-point theorem. See \nameref{sec: app_cont}.
\end{proof}
Note that in \cref{lemma: Implicit_smooth}, perturbations can be chosen to be small enough so that all solutions $(U_{\ol{\tau}},\mu_{\ol{\tau}},S_{\theta,\ol{\tau}})$ satisfy (A2). This completes the proof of the first part of \cref{thm: continuation} regarding perturbations at solutions.
\subsection{Proof of Unique Heaviside Perturbation}
Having established the application of the implicit function theorem, we finish up the proof of \cref{thm: perturb}. By \cref{lemma: Implicit_smooth}, it follows that for $0<\delta_0 \ll 1$, equation \eqref{eq: front_Utau} is satisfied for a pair $(U_{\tau},\mu_{\tau})$, with $U_{\tau}$ unique in $E_\theta$ (where $U_{\tau}(0)=V_\theta(0)=\theta$) for all $\tau< \delta_0$. We finish the proof of \cref{thm: perturb} regarding uniqueness and smoothness in the proceeding lemmas.
\begin{lemma}\label{lemma: Unique}
For $\tau \ll 1,$ all solutions satisfying (A2) are unique (modulo translation).
\end{lemma}
\begin{proof}
First of all, the base point $(V_\theta,v_\theta)$ is unique. By \cref{lemma: Implicit_smooth}, there exists $\epsilon_0>0$ and $\delta_0 >0$ such that all solutions $(U_{\ol{\tau}},\mu_{\ol{\tau}},S_{\theta,\ol{\tau}})\in E_\theta\times \mathbb{R}_+ \times \{S_{\theta,\ol{\tau}}\}_{\ol{\tau}\geq 0}$ with $\norm{U_{\ol{\tau}}-V_\theta}_{2,\infty} + |\mu_{\ol{\tau}}-v_\theta|<\epsilon_0$ and $\norm{S_{\theta,\ol{\tau}}-S_{\theta,0}}_1<\delta_0$ are unique. We show that any arbitrary solution can be translated to fit these requirements. 

Let $(U_{\ol{\tau}},\mu_{\ol{\tau}},S_{\theta,\ol{\tau}})$ be an arbitrary solution satisfying (A2).  By translation invariance, we can easily translate so that $U_{\ol{\tau}}(0)=V_\theta(0)=\theta$, and therefore, $U_{\ol{\tau}}\in E_\theta$ holds. We claim that for all such translations, we may choose $\tau_0>0$ such that for $\ol{\tau}<\tau_0,$ we have $\norm{U_{\ol{\tau}}-V_\theta}_{2,\infty} + |\mu_{\ol{\tau}}-v_\theta|<\epsilon_0$ and $\norm{S_{\theta,\ol{\tau}}-S_{\theta,0}}_1<\delta_0,$ with the choice of $\tau_0$ not depending on the particular solution. Then the uniqueness result in \cref{lemma: Implicit_smooth} completes the proof.

For a given solution, by \cref{lemma: Global_Wave_Bounds}, we have the global bounds $v_{\theta+\ol{\tau}}<\mu_{\ol{\tau}}<v_\theta$. Since $v_{\theta+\ol{\tau}}\to v_\theta$ as $\ol{\tau} \to 0,$ clearly $\mu_{\ol{\tau}} \to v_\theta$ as $\ol{\tau} \to 0$ with the convergence being uniform over all solutions.

Under the translation $U_{\ol{\tau}}^{-1}(\theta)=0,$ by \cref{lemma: Global_Utau_Bounds}, we have the global bound $\omega_L(\Delta_{\ol{\tau}}(\ol{\tau}),\ol{\tau})<\ol{\tau}.$ By the squeeze theorem, as $\ol{\tau}\to 0,$ this implies
$$
\min_{y \in [0,\Delta_{\ol{\tau}}(\ol{\tau})]} (V_\theta(\Delta_{\ol{\tau}}(\ol{\tau})-y)-V_\theta(-y)) 
$$
converges to zero. Given that $V_\theta(z)<\theta$ on $(-\infty,0)$ and $V_\theta(z)>\theta$ on $(0,\infty),$ this is only possible if $\Delta_{\ol{\tau}}(\ol{\tau}) \to 0.$ Hence, we may further restrict $\ol{\tau}$ to make $\Delta_{\ol{\tau}}(\ol{\tau})$ as small as we desire, uniform in the solution choice since the convergence rate is controlled by $V_\theta$ and all other nearby Heaviside solutions.

By \cref{cor: speed_equiv}, there exists a unique $\ol{\theta}\in(\theta,\theta+\ol{\tau})$ such that $\mu_{\ol{\tau}}=v_{\ol{\theta}}$ and we may write
\begin{align*}
U_{\ol{\tau}}(z)=\integ{y}{0}[\infty]<S_{\theta,\ol{\tau}}(U_{\ol{\tau}}(y))V_{\ol{\theta}}'(z-y)>. 
\end{align*}
Since all solutions, by translation choice, satisfy $U_{\ol{\tau}}(y)<\theta$ on $(-\infty,0)$ and $U_{\ol{\tau}}(y)>\theta$ on $(0,\infty),$ we see that $S_{\theta,0}(U_{\ol{\tau}}(y))=H(y).$ Hence, 
$$
V_{\ol{\theta}}(z)=\integ{y}{0}[\infty]<S_{\theta,0}(U_{\ol{\tau}}(y)) V_{\ol{\theta}}'(z-y)>.
$$
Subtracting, 
\begin{align*}
|U_{\ol{\tau}}(z)-V_{\ol{\theta}}(z)| &= \left|\integ{y}{0}[\infty]<\left[S_{\theta,\ol{\tau}}(U_{\ol{\tau}}(y))-S_{\theta,0}(U_{\ol{\tau}}(y))\right]V_{\ol{\theta}}'(z-y)> \right| \\
&=\left|\integ{y}{0}[\Delta_{\ol{\tau}}(\ol{\tau})]<\left[S_{\theta,\ol{\tau}}(U_{\ol{\tau}}(y))-S_{\theta,0}(U_{\ol{\tau}}(y))\right]V_{\ol{\theta}}'(z-y)> \right| 
\end{align*}
As $\ol{\tau} \to 0,$ the integrand is uniformly bounded in $z$ and solution choice, while $\Delta_{\ol{\tau}}(\ol{\tau}) \to 0$ uniformly. Hence, $\norm{U_{\ol{\tau}}-V_\theta}_\infty \longrightarrow 0$ uniformly in solution choice. Finally, from the coupling equations
\begin{align*}
\mu_{\ol{\tau}} U_{\ol{\tau}}' + U_{\ol{\tau}} &= \integ{y}{0}[\infty]<K(z-y)S_{\theta,\ol{\tau}}(U_{\ol{\tau}}(y))>, \\
v_\theta V_\theta ' + V_\theta &= \integ{y}{0}[\infty]<K(z-y)S_{\theta,0}(V_\theta(y))> ,
\end{align*}
we can use the uniform convergences above to establish $\norm{U_{\ol{\tau}}'-V_\theta '}_\infty \longrightarrow 0$ and $\norm{U_{\ol{\tau}}''-V_\theta ''}_\infty \longrightarrow 0$. The claim now follows from \cref{lemma: Implicit_smooth}.
\end{proof}
\begin{lemma}\label{lemma: Solution_Smooth}
For $\tau>0,$ suppose $(U_{\tau},\mu_{\tau},S_{\theta,\tau})$ is a solution satisfying (A2). Then $U_{\tau} \in C^\infty(\mathbb{R})$ with derivatives of all orders vanishing at infinity.
\end{lemma}
\begin{proof}
We prove the claim with an induction argument. From $F\equiv 0,$ we know
$$
\mu_{\tau}U_{\tau}'(z)=-U_{\tau}(z) + \integ{y}{\mathbb{R}}[]<K(y)S_{\theta,\tau}(U_{\tau}(z-y))>.
$$
Since $S_{\theta,\tau}\leq 1$ and $K\in L^1(\mathbb{R}),$ by dominated convergence theorem, it follows that $U_{\tau}'(\pm \infty)=0.$ Differentiating the right hand side, $-U_{\tau}'$ obviously exists and since it is bounded, the difference quotient applied to the integrand is dominated by $\max S_{\theta,\tau}'(\cdot) \norm{U_{\tau}'}_\infty |K(y)|,$ which is integrable. Hence
$$
\mu_{\tau}U_{\tau}''(z)=-U_{\tau}'(z) + \integ{y}{\mathbb{R}}[]<K(y)S_{\theta,\tau}'(U_{\tau}(z-y))U_{\tau}'(z-y)>,
$$ 
and again, by dominated convergence theorem, $U_{\tau}''(\pm \infty)=0.$ Since $S_{\theta,\tau}$ is smooth and all derivatives have compact support, this process can keep repeating, proving the claim.
\end{proof}
Combining \cref{lemma: Solution_Smooth,lemma: Unique,lemma: Implicit_smooth}, the proof of \cref{thm: perturb} is complete.

\subsection{Continuation Process} \label{subsec: Continuation Process}
Up to this point, we have shown that at a given solution, the implicit function theorem can be applied. Our base point for the first perturbation is the Heaviside solution $(V_\theta,v_\theta).$ However, after the first time, the implicit function theorem only guarantees the existence of a neighborhood $[\tau,\ol{\tau})$ where solutions persist. As we repeat this process, we need a method for passing through limits, ensuring that the continuation never gets stuck.

For example, consider the following continuation path. Starting with $\tau=0,$ there exists $\epsilon_1>0$ such that (unique) solutions $(U_{\tau},\mu_{\tau})$ satisfying (A2) exist for $\tau\in[0,\epsilon_1]$ with $\Delta_{\tau}(\xi)< \sigma(\theta).$ Starting from base solution $(U_{\epsilon_1},\mu_{\epsilon_1}),$ there exists $\epsilon_2>0$ such that solutions satisfying (A2) exist for $\tau\in[\epsilon_1,\epsilon_1+\epsilon_2]$ (and therefore, $[0,\epsilon_1+\epsilon_2]$) with $\Delta_{\tau}(\xi)< \sigma(\theta).$ As this process repeats countably many times, there are several possibilities, prior to the analysis in this subsection.

One possibility is that the process proceeds until for some finite step in the iteration, $\Delta_{\tau}(\tau)=\sigma(\theta)$ is reached and we are done; solutions certainly may still exist beyond this point, but we have not proven that they do. The other is that $\sum_{j=1}^\infty \epsilon_j = \ol{\tau}\leq 1-2\theta$ and for all $\tau\in[0,\ol{\tau}),$ solutions exist satisfying (A2) with $\Delta_{\tau}(\xi)\leq \sigma(\theta).$ If we can show a solution $(U_{\ol{\tau}},\mu_{\ol{\tau}})$ exists satisfying (A2) with $\Delta_{\ol{\tau}}(\xi)\leq \sigma(\theta),$ then we can start the process over, starting from $\ol{\tau}.$ Effectively, existence cannot break down unless our hypotheses break down.

In this subsection, we use the exact definitions of $\sigma_1(\theta)$ and $\sigma_2(\theta)$ to complete the proof of the second part of \cref{thm: continuation}, where we recall 
\theoremstyle{theorem}
\newtheorem*{thm_cont}{Theorem \ref{thm: continuation}}
\begin{thm_cont}[Continuation Criteria]
Suppose at $\tau\in(0,1-2\theta),$ a solution $(U_{\tau},\mu_{\tau})$ exists satisfying assumption (A2) with $\Delta_{\tau}(\xi)\leq \sigma(\theta).$ Then there exists $\ol{\tau}>\tau$ such that for all $\delta\in[\tau,\ol{\tau}),$ a solution $(U_{\delta},\mu_{\delta})$ exists. Moreover, if for all $\delta\in[\tau,\ol{\tau}),$ solutions satisfy (A2) with $\Delta_{\delta}(\xi)\leq \sigma(\theta),$  then there exists a solution $(U_{\ol{\tau}},\mu_{\ol{\tau}})$ satisfying (A2) with $\Delta_{\ol{\tau}}(\xi)\leq \sigma(\theta).$
\end{thm_cont}
The technique breaks down into two main parts. The first part, also seen in \cite{ExistenceandUniqueness-ErmMcLeod, Bates1997}, is to show the corresponding solution sets 
\begin{align*}
\mathcal{U}:=\{U_{\delta}: \delta \in [\tau,\ol{\tau}) \}, \qquad \mathcal{M}:=\{\mu_{\delta}:  \delta \in [\tau,\ol{\tau})  \},
\end{align*}
satisfy the requirements to apply the Arzel\'{a}--Ascoli and Bolzano--Weierstrass theorems respectively. Therefore, for a subsequence $\{\delta_n\}$ with $\delta_n \to \ol{\tau},$ the limiting solution $(U_{\delta_n},\mu_{\delta_n}) \to (U_{\ol{\tau}},\mu_{\ol{\tau}})$ exists. The second part is to show $(U_{\ol{\tau}},\mu_{\ol{\tau}})$ satisfies (A2) with $\Delta_{\ol{\tau}}(\xi)\leq \sigma(\theta).$ In all cases, we fix the translation so that $U_{\delta}^{-1}(\theta)=0$ for all $\delta.$

We note that we have only proved uniqueness for sufficiently small $\tau.$ Since we have not proven uniqueness for larger $\tau$ (although we suspect it holds true), for a given $\delta,$ there may be multiple solution choices for $(U_{\delta},\mu_{\delta})$. Therefore, the set $\mathcal{U}\times \mathcal{M}$ is understood to consist of one arbitrary pair for each $\delta.$ In a similar manner, $(U_{\ol{\tau}},\mu_{\ol{\tau}})$ may not be unique, even though it is a unique limit of some subsequence.

The following lemma shows that the Arzel\'{a}--Ascoli theorem can be applied to $\mathcal{U}.$ 

\begin{lemma}\label{lemma: U_uniformly_bounded}
For the set $\mathcal{U},$ the following properties hold.
\begin{itemize}
\item[(i)] $\mathcal{U}$ is bounded in the norm $\norm{\cdot}_{2,\infty}.$
\item[(ii)] $\mathcal{U}$ is uniformly equicontinuous.
\item[(iii)] For all $U_{\delta} \in \mathcal{U},$ the limits
$$
U_{\delta}(-\infty)=0, \qquad U_{\delta}(\infty)=1, \qquad U_{\delta}^{(k)}(\pm \infty)=0, \qquad \textnormal{for } k=1,2,
$$
hold uniformly in $\delta.$
\end{itemize}
\end{lemma}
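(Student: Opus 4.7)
The plan is to extract the four statements from the integral representation
\[
U_\delta(z) = \f{\mu_\delta}\integ{s}{-\infty}[z]<e^{(s-z)/\mu_\delta}\int_{\R} K(s-y)S_\delta(U_\delta(y)-\theta)\,\mathrm{d}y>,
\]
obtained by multiplying the wave equation by the integrating factor $e^{z/\mu_\delta}$ and integrating. Because $0 \leq S_\delta \leq 1$ and $K\in L^1(\R)$, the inner convolution is uniformly bounded by $\norm{K}_1$, so $\norm{U_\delta}_\infty \leq \norm{K}_1$ independent of $\delta$. Differentiating the wave equation yields $\mu_\delta U_\delta' = \int_{\R} K(\cdot-y)S_\delta(U_\delta(y)-\theta)\,\mathrm{d}y - U_\delta$ and $\mu_\delta U_\delta'' + U_\delta' = \int_{\R} K'(\cdot-y) S_\delta(U_\delta(y)-\theta)\,\mathrm{d}y$ (valid since $K \in W^{2,1}$), so $\norm{U_\delta'}_\infty$ and $\norm{U_\delta''}_\infty$ are controlled provided $\mu_\delta$ is bounded away from zero. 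Thus (i) reduces to the nontrivial half of (ii), which I tackle next.

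For the upper bound in (ii), I would use hypothesis $H_1$: since $\delta = \integ{y}{0}[z_\delta]<U_\delta'(y)> \leq z_\delta \norm{U_\delta'}_\infty$ and $z_\delta \leq \sigma_1$, combined with $\mu_\delta \norm{U_\delta'}_\infty \leq 2\norm{K}_1$, I obtain $\mu_\delta \leq 2\sigma_1 \norm{K}_1/\tau$ uniformly. For the lower bound I would invoke the closed-form wave-speed expression \eqref{eq: wave_formula}. Its numerator equals $\Lambda(\delta,\theta)$ by \cref{lemma: positive_wave_speed}; since $\Lambda$ is continuous on the compact interval $[\tau,\ol{\tau}]$ and strictly positive there (because $\ol{\tau} < \tau_0(\theta)$), there is a uniform lower bound $\Lambda_{\min} > 0$ on the numerator. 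The denominator $\integ{y}{0}[z_\delta]<(U_\delta')^2 S_\delta'(U_\delta-\theta)>$ must be bounded above uniformly; since $U_\delta' > 0$ on $[0,z_\delta]$ by $H_1$, I would substitute $u = U_\delta(y)-\theta$ and exploit $\int_0^\delta S_\delta'(u)\,\mathrm{d}u = 1$ together with the $L^1$-continuous deformation of $\{S_\delta\}$ on $[\tau,\ol{\tau}]$ to obtain a $\delta$-uniform bound.

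Once (ii) is in place, (i) follows from the bootstrap above, and (iii) is immediate from the mean value theorem and the uniform bound on $\norm{U_\delta'}_\infty$. For (iv) I would combine the exponential decay $|K(x)| \leq Ce^{-\rho|x|}$ with the integral representation together with its dual form $1 - U_\delta(z) = \f{\mu_\delta}\integ{s}{-\infty}[z]<e^{(s-z)/\mu_\delta}(1 - \int_{\R} K(s-y)S_\delta(U_\delta(y)-\theta)\,\mathrm{d}y)>$; the two-sided bounds on $\mu_\delta$ from (ii) then guarantee that the limits $U_\delta(-\infty)=0$, $U_\delta(\infty)=1$, and $U_\delta^{(k)}(\pm\infty)=0$ hold uniformly in $\delta$, with the derivative limits obtained by applying the same argument to the analogous identities for $U_\delta'$ and $U_\delta''$.

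The delicate step will be the uniform upper bound on the denominator in \eqref{eq: wave_formula}, where the integrand $(U_\delta')^2 S_\delta'$ could in principle concentrate sharply as $\delta$ varies. The hypothesis $H_1$ (via $z_\delta \leq \sigma_1$) and the $L^1$-continuity of the sigmoid family are precisely what rule this out, but translating those qualitative assumptions into a quantitative estimate that is \emph{independent} of $1/\mu_\delta$ is the core analytical obstacle, since naively one picks up inverse powers of $\mu_\delta$ from bounding $U_\delta'$, and this ruins the lower bound one is trying to establish.
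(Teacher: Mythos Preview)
Your proposal has a genuine gap, and you identify it yourself in the last paragraph: the circularity between (i) and (ii). You bound $\norm{U_\delta'}_\infty$ and $\norm{U_\delta''}_\infty$ in terms of $1/\mu_\delta$, so (i) depends on the lower bound $C_L$ in (ii); but then to bound the denominator of \eqref{eq: wave_formula} above --- which is what you need for $C_L$ --- you require a uniform bound on $U_\delta'$, which in your scheme again picks up a factor of $1/\mu_\delta$. This is a true logical loop, not just a delicate estimate.

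The paper breaks the circularity by proving (i) \emph{before} and \emph{independently of} (ii). The trick is a change of variables in your integral representation: writing $s = z + \mu_\delta t$ (so $\mathrm{d}s = \mu_\delta\,\mathrm{d}t$) kills the $1/\mu_\delta$ prefactor and gives
\[
U_\delta^{(j)}(z)=\int_{-\infty}^{0} e^{t}\int_{\R} K^{(j)}(z+\mu_\delta t-y)\,S_\delta(U_\delta(y)-\theta)\,\mathrm{d}y\,\mathrm{d}t,
\]
whence $\norm{U_\delta^{(j)}}_\infty \le \norm{K^{(j)}}_1$ for $j=0,1,2$, with no dependence on $\mu_\delta$ at all. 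With this in hand, the denominator of \eqref{eq: wave_formula} is bounded, after your substitution $u=U_\delta(y)-\theta$, by $\norm{U_\delta'}_\infty\int_0^\delta S_\delta'(u)\,\mathrm{d}u \le \norm{K'}_1$, and the lower bound $C_L$ follows cleanly. Your argument for the upper bound $C_U$ is fine (and in fact more direct than the paper's contradiction argument); once (i) and (ii) are decoupled, your treatments of (iii) and (iv) are essentially along the paper's lines.
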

\begin{proof}
\begin{itemize}
\item[(i)] For all $\delta$, write the solutions in the form
\begin{align} \label{eq: U_bound_Arz}
|U_{\delta}^{(j)}(z)|=\left|\int_{-\infty}^0 e^x \int_\mathbb{R} K^{(j)}(z+\mu_{\delta} x -y)S_{\theta,\delta}(U_{\delta}(y))\,\mathrm{d}y\mathrm{d}x \right| \leq \norm{K^{(j)}}_1
\end{align}
for $j=0,\, 1.$ Since $K$ has a weak first order derivative, we may write the $j=1$ case as
\begin{align*}
|U_{\delta}'(z)|&=\left|\int_{-\infty}^0 e^x \int_\mathbb{R} K(z+\mu_{\delta} x -y)S_{\theta,\delta}'(U_{\delta}(y))U_{\delta}'(y)\,\mathrm{d}y\mathrm{d}x \right|
\intertext{so}
|U_{\delta}''(z)|&=\left|\int_{-\infty}^0 e^x \int_\mathbb{R} K'(z+\mu_{\delta} x -y)S_{\theta,\delta}'(U_{\delta}(y))U_{\delta}'(y)\,\mathrm{d}y\mathrm{d}x \right| \\
&\leq \left[\max_{\delta \in [\tau,\ol{\tau}]}S_{\theta,\delta}'(\cdot)\right]\norm{U_{\delta}'}_\infty \norm{K'}_1 \leq \left[\max_{\delta \in [\tau,\ol{\tau}]}S_{\theta,\delta}'(\cdot)\right] \norm{K'}_1 ^2
\end{align*}
by \eqref{eq: U_bound_Arz}.
\item[(ii)] Let $z^*$ be fixed. Then for $z$ near $z^*,$
\begin{align*}
|U_{\delta}(z)-U_{\delta}(z^*)|&=\left|\int_{-\infty}^0 e^x \int_\mathbb{R} K(\mu_{\delta}x + y)\left[S_{\theta,\delta}(U_{\delta}(z-y))-S_{\theta,\delta}(U_{\delta}(z^*-y))\right]\,\mathrm{d}y\mathrm{d}x\right| \\
&\leq |z-z^*|\left[\max_{\delta \in [\tau,\ol{\tau}]}S_{\theta,\delta}'(\cdot)\right]\norm{U_{\delta}'}_\infty \norm{K}_1 \\
&\leq |z-z^*|\left[\max_{\delta \in [\tau,\ol{\tau}]}S_{\theta,\delta}'(\cdot)\right]\norm{K'}_1 \norm{K}_1.
\end{align*}
Therefore, $\mathcal{U}$ is uniformly equicontinuous. A similar argument applies to the derivatives.
\item[(iii)] We write $U_{\delta}$ in the form
\begin{align*}
U_{\delta}(z)&=\int_{-\infty}^0 e^x \int_{-\infty}^\infty K(y) S_{\theta,\delta}(U_{\delta}(z+\mu_{\delta}x-y))\, \mathrm{d}y\mathrm{d}x \\
&=\int_{-\infty}^0 e^x \int_{-\infty}^{z+\mu_{\delta}x} K(y) S_{\theta,\delta}(U_{\delta}(z+\mu_{\delta}x-y))\, \mathrm{d}y\mathrm{d}x
\end{align*}
As $z \to -\infty,$ the result is trivial since
$$
|U_{\delta}(z)|\leq \int_{-\infty}^0 e^x\int_{-\infty}^z |K(y)|\,\mathrm{d}y\mathrm{d}x=\integ{y}{-\infty}[z]<|K(y)|> \longrightarrow 0
$$
uniformly. For the limits as $z \to \infty$, we notice that outside of large bounded intervals $\mathcal{I}_y, \, \mathcal{I}_x,$ the integrals with respect to $y$ and $x$ are as small as we desire, uniform in $(U_{\delta},\mu_{\delta}),$ since $S_{\theta,\delta} \leq 1$ and $0\leq \mu_{\delta} \leq v_\theta.$ Now let $\mathcal{I}_y, \, \mathcal{I}_x$ be fixed.  For each $\delta,$ we have $S_{\theta,\delta}(U_{\delta}(\eta))=1$ for $\eta \geq U_{\delta}^{-1}(\theta+\delta).$ But we also have the bound $U_{\delta}^{-1}(\theta+\delta)\leq \sigma(\theta).$ Therefore, there exists a large number $T,$ uniform in $\delta,$ such that for all $z\geq T$,
$$
U_{\delta}(z) \approx \int_{\mathcal{I}_x} e^x \int_{\mathcal{I}_y} K(y) \, \mathrm{d}y\mathrm{d}x \approx 1,
$$
with the error being uniform in $\delta.$ Therefore, $U_{\delta}(\infty)=1$ uniformly. The uniform limits $U_{\delta}^{(j)}(\pm \infty)=0$ for $j=1,\, 2$ follow by the same argument but replacing $K$ with its derivatives.
\end{itemize}
\end{proof}
The previous lemma shows that the Arzel\'{a}--Ascoli theorem can be applied to $\mathcal{U}.$ Moreover, $\mathcal{M}$ is obviously bounded below by zero and above by $v_\theta$ so the Bolzano--Weierstrass theorem can be applied.

Combined, there exists a subsequence $
\{\delta_n\}$ with $\delta_n \to \ol{\tau}$ such that 
$$
(U_{\delta_n},\mu_{\delta_n},S_{\theta,\delta_n}) \to (U_{\ol{\tau}},\mu_{\ol{\tau}},S_{\theta,\ol{\tau}})
$$
exists with respect to the norm $\norm{\cdot}_{2,\infty}+|\cdot|+\norm{\cdot}_1$. By applying the dominated convergence theorem, we can show that $(U_{\ol{\tau}},\mu_{\ol{\tau}},S_{\theta,\ol{\tau}})$ is a solution in that it solves $F[U_{\ol{\tau}},\mu_{\ol{\tau}},S_{\theta,\ol{\tau}}](z)=0.$ Finally, we complete the proof of \cref{thm: continuation} by showing (A2) is satisfied and $\Delta_{\ol{\tau}}(\xi)\leq \sigma(\theta)$ for the limiting solution.

At the very least, the desired limits $U_{\ol{\tau}}(-\infty)=0,$ $U_{\ol{\tau}}(\infty)=1$ hold by \cref{lemma: U_uniformly_bounded} (iii) and $U_{\ol{\tau}}'\geq 0$ when $U_{\ol{\tau}} \in [\theta,\theta+\ol{\tau}].$ Define
\begin{align*}
\alpha &:= \max\{z: U_{\ol{\tau}}(z)=\theta\}, \\
\beta &:= \min\{z: U_{\ol{\tau}}(z)=\theta+\ol{\tau}\}.
\end{align*}
We may translate so that $\alpha=0.$ Note that $U_{\ol{\tau}}'\geq (\not\equiv)\, 0$ on $[0,\beta]$ and $U_{\ol{\tau}}'=0$ for any points where $U_{\ol{\tau}} \in \{\theta,\theta+\ol{\tau} \}$ outside of $[0,\beta].$ Certainly $\beta \leq \sigma(\theta)=\min\{\sigma_1(\theta),\sigma_2(\theta)\}$ since the claim holds for all $\delta.$  It suffices to show that $U_{\ol{\tau}}'(z)>0$ on $[0,\beta],$ $U_{\ol{\tau}}(z)<\theta$ on $(-\infty,0),$ and $U_{\ol{\tau}}(z)>\theta+\ol{\tau}$ on $(\beta,\infty).$ Therefore, assumption (A2) is satisfied.

The most challenging task is verifying the threshold conditions outside of $[0,\beta].$ In general, the difficulty is that we require information about the wave shape when the closed form is not available. In particular, the region
$$
(-M,-M+\beta)\cup (M,M+\beta) \subset [0,\beta]^c
$$
is the most difficult because it is in this region where precise information about $U_{\ol{\tau}}$ is required in order to determine if there are any critical points. This is where the estimate $\beta\leq \sigma_2(\theta)$ provides value since this bound guarantees  $U_{\ol{\tau}} \not\in \{\theta,\theta+\ol{\tau} \}$ inside this region. We prove this fact in the following lemma.
\begin{lemma}\label{lemma: alphbeta_critpts}
\begin{itemize}
\item[(i)] Suppose $\beta \leq \sigma_2(\theta)<M$ and $U_{\ol{\tau}}$ has a critical point at $z_* \in (-M,-M+\beta).$ Then $U_{\ol{\tau}}(z_*)<\theta.$
\item[(ii)] Suppose $\beta \leq \sigma_2(\theta)<M$ and $U_{\ol{\tau}}$ has a critical point at $z^* \in (M,M+\beta).$ Then $U_{\ol{\tau}}(z^*)>\theta+\ol{\tau}.$
\end{itemize}
\end{lemma}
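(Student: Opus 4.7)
Both parts rest on evaluating the integral equation at the critical point. Since $U_{\ol\tau}$ solves $\mu_{\ol\tau} U_{\ol\tau}' + U_{\ol\tau} = \int_\R K(\cdot-y)S_{\ol\tau}(U_{\ol\tau}(y)-\theta)\,dy$ and its derivative vanishes at $z_*$ (respectively $z^*$), the value at that point equals the nonlocal integral alone. The first task is to pin down $S_{\ol\tau}(U_{\ol\tau}-\theta)$ on the tails. Because each $U_{\delta_n}$ is a front crossing the thresholds $\theta+\eta\delta_n$ exactly once, one has $U_{\delta_n}<\theta$ on $(-\infty,0)$ and $U_{\delta_n}>\theta+\delta_n$ on $(\beta_n,\infty)$; uniform convergence combined with strict monotonicity of $U_{\delta_n}$ on $[0,\beta_n]$ will force $\beta_n\to\beta$, so in the limit $S_{\ol\tau}(U_{\ol\tau}(y)-\theta)=0$ for $y\leq 0$ and equals $1$ for $y\geq\beta$.

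For part (i), I split the integral over $(-\infty,0]$, $(0,z_*+M)$, $(z_*+M,\beta)$, and $[\beta,\infty)$. The first contributes $0$ and the last equals $\int_\beta^\infty K(z_*-y)\,dy$ exactly; on $(0,z_*+M)$ the translated kernel is strictly positive so I bound $S_{\ol\tau}\leq 1$, and on $(z_*+M,\beta)$ it is negative with $S_{\ol\tau}\geq 0$, producing a nonpositive contribution I discard upward. After the substitution $x=z_*-y$, this yields
\begin{equation*}
U_{\ol\tau}(z_*)\leq\integ{x}{-M}[z_*]<K(x)>+\integ{x}{-\infty}[z_*-\beta]<K(x)>.
\end{equation*}
Because $z_*\in(-M,-M+\beta)$ and $\beta\leq\sigma_2(\theta)$, one has $z_*<-M+\sigma_2$ and $z_*-\beta>-M-\sigma_2$, both strict. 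Since $K>0$ on $(-M,M)$ and $K<0$ on $(-\infty,-M)$, each integral is strictly smaller than the analogous integral with upper limit shifted to its $\sigma_2$-prescribed value, and the sum of those prescribed integrals equals $\theta$ by the defining identity for $\sigma_2(\theta)$. Hence $U_{\ol\tau}(z_*)<\theta$.

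Part (ii) is the mirror argument, now producing a \emph{lower} bound. Decomposing $\R$ into $(-\infty,0]$, $(0,z^*-M)$, $(z^*-M,\beta)$, $(\beta,z^*+M)$, and $(z^*+M,\infty)$, I use $S_{\ol\tau}\leq 1$ where $K<0$ and $S_{\ol\tau}\geq 0$ where $K>0$ on the two middle regions to push $K\cdot S_{\ol\tau}$ downward. After substitution, the outcome is
\begin{equation*}
U_{\ol\tau}(z^*)\geq\integ{x}{-\infty}[-M]<K(x)>+\integ{x}{-M}[z^*-\beta]<K(x)>+\integ{x}{M}[z^*]<K(x)>.
\end{equation*}
Splitting $\int_{-\infty}^{M-\sigma_3}K+\int_M^{M+\sigma_3}K=\theta+\ol\tau$ as $\int_{-\infty}^{-M}K+\int_{-M}^{M-\sigma_3}K+\int_M^{M+\sigma_3}K=\theta+\ol\tau$ and comparing, the excess of the lower bound over $\theta+\ol\tau$ is $\int_{M-\sigma_3}^{z^*-\beta}K+\int_{M+\sigma_3}^{z^*}K$. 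Both are strictly positive: the first because its interval lies inside $(-M,M)$ where $K>0$, the second because it equals $-\int_{z^*}^{M+\sigma_3}K$ with $K<0$ throughout.

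The main obstacle is securing $S_{\ol\tau}\equiv 1$ on $[\beta,\infty)$, which requires the convergence $\beta_n\to\beta$ along the extracted subsequence. Without the resulting exact contribution $\int_{-\infty}^{z_*-\beta}K$ in (i), the crude upper bound $\int_{-M}^{z_*}K$ alone is insufficient, since by the $\sigma_2$ identity $\int_{-M}^{-M+\sigma_2}K=\theta-\int_{-\infty}^{-M-\sigma_2}K>\theta$; the strictly negative tail correction is precisely what beats $\theta$. Once the tail identification is in place, what remains is careful sign bookkeeping calibrated to the precise definitions of $\sigma_2(\theta)$ and $\sigma_3(\theta+\ol\tau)$.
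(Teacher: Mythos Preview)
Your proof is correct and follows the same overall strategy as the paper: evaluate the traveling-wave equation at the critical point (so the $\mu_{\ol\tau}U_{\ol\tau}'$ term drops), use $S_{\ol\tau}(U_{\ol\tau}-\theta)\equiv 0$ on $(-\infty,0]$ and $\equiv 1$ on $[\beta,\infty)$ to reduce to explicit kernel integrals, and then compare with the defining identities for $\sigma_2(\theta)$ and $\sigma_3(\theta+\ol\tau)$.

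The one genuine difference is in how the ``middle'' contribution over $(0,\beta)$ is estimated. The paper exploits that $y\mapsto S_{\ol\tau}(U_{\ol\tau}(z_*-y)-\theta)$ is monotone on $[z_*-\beta,z_*]$ (a consequence of $U_{\ol\tau}'\geq 0$ on $[0,\beta]$) and that $K$ changes sign exactly at $y=-M$; this lets the paper factor out the single value $S_{\ol\tau}(U_{\ol\tau}(z_*+M)-\theta)$ in front of $\int_{z_*-\beta}^{z_*}K$ before bounding it by $1$. You instead split $(0,\beta)$ at $z_*+M$ (respectively $z^*-M$), the zero of the shifted kernel, and use only the crude bounds $0\leq S_{\ol\tau}\leq 1$ on each piece. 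Your route is slightly more elementary---it never invokes monotonicity of $U_{\ol\tau}$ on $[0,\beta]$---while the paper's route yields a marginally sharper intermediate bound; both land on the same final inequality. One small remark: the tail identification $S_{\ol\tau}\equiv 1$ on $[\beta,\infty)$ does not strictly require $\beta_n\to\beta$; it already follows from the limiting property $U_{\ol\tau}'\geq 0$ whenever $U_{\ol\tau}\in(\theta,\theta+\ol\tau)$, which prevents $U_{\ol\tau}$ from dipping below $\theta+\ol\tau$ to the right of $\beta$. The paper uses this implicitly.
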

\begin{proof}
\begin{itemize}
\item[(i)] From the equation
$$
\mu_{\ol{\tau}}U_{\ol{\tau}}' + U_{\ol{\tau}} = \integ{y}{\mathbb{R}}[]<K(z-y)S_{\theta,\ol{\tau}}(U_{\ol{\tau}}(y))>,
$$
if $U_{\ol{\tau}}'(z_*)=0$ for some $z_*,$ then
\begin{align*}
U_{\ol{\tau}}(z_*)&=\integ{y}{0}[\infty]<K(z_*-y)S_{\theta,\ol{\tau}}(U_{\ol{\tau}}(y))> \\
&= \integ{y}{-\infty}[z_*-\beta]<K(y)> + \integ{y}{z_*-\beta}[z_*]<K(y)S_{\theta,\ol{\tau}}(U_{\ol{\tau}}(z_*-y))> \\
&<  \integ{y}{-\infty}[-M-\beta]<K(y)> +S_{\theta,\ol{\tau}}(U_{\ol{\tau}}(z_*+M)) \integ{y}{z_*-\beta}[z_*]<K(y)>\\
&< \integ{y}{-\infty}[-M-\beta]<K(y)> + \integ{y}{-M}[-M+\beta]<K(y)> \\
&< \integ{y}{-\infty}[-M-\sigma_2(\theta)]<K(y)> + \integ{y}{-M}[-M+\sigma_2(\theta)]<K(y)>=\theta
\end{align*}
by the definition of $\sigma_2.$
\item[(ii)] Similar to (i), if $U_{\ol{\tau}}'(z^*)=0$ for some $z^*$, then
\begin{align*}
U_{\ol{\tau}}(z^*)&> \integ{y}{-\infty}[M-\beta]<K(y)> + S_{\theta,\ol{\tau}}(U_{\ol{\tau}}(z^*-M)) \integ{y}{z^*-\beta}[z^*]<K(y)> \\
&> \integ{y}{-\infty}[M-\beta]<K(y)> + \integ{y}{M}[M+\beta]<K(y)> \\
&=1-\left(\integ{y}{M-\beta}[M]<K(y)>+\integ{y}{M+\beta}[\infty]<K(y)>\right).\\
\intertext{By symmetry, the previous term can be written as}
&\phantom{=}1-\left(\integ{y}{-M}[-M+\beta]<K(y)>+\integ{y}{-\infty}[-M-\beta]<K(y)>\right)\\
&> 1-\left(\integ{y}{-M}[-M+\sigma_2(\theta)]<K(y)>+\integ{y}{-\infty}[-M-\sigma_2(\theta)]<K(y)>\right) \\
&=1-\theta.
\end{align*}
Recalling $1-2\theta\geq\ol{\tau},$ it follows that $1-\theta\geq\theta+\ol{\tau}$ and the claim follows.
\end{itemize}
\end{proof}
The remaining region we have not dealt with yet is 
$$
(-\infty,-M]\cup [-M+\beta,M]\cup[M+\beta,\infty),
$$
but it is easy to see the possible behavior of $U_{\ol{\tau}}'$ on these intervals. Define 
$$
 h(z):=U_{\ol{\tau}}'(z) e^{\f{z}{\mu_{\ol{\tau}}}},
$$
which notably, has the same sign as $U_{\ol{\tau}}'.$ The following lemma will account for the behavior of $U_{\ol{\tau}}'$ on all three intervals.
\begin{lemma}\label{lemma: h_inc_dec}
\begin{itemize}
\item[(i)] The function $h$ is strictly decreasing on $(-\infty,-M]\cup[M+\beta,\infty).$
\item[(ii)] The function $h$ is strictly increasing on $[-M+\beta,M].$
\end{itemize}
\end{lemma}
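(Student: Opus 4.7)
The plan is to express $h'$ via the differentiated integral equation and exploit the fact, recorded just before the lemma, that $U_{\ol{\tau}}' \geq 0$ wherever $U_{\ol{\tau}} \in [\theta, \theta+\ol{\tau}]$. Differentiating $F[U_{\ol{\tau}}, \mu_{\ol{\tau}}, S_{\ol{\tau}}] \equiv 0$ once in $z$ yields
\begin{equation*}
\mu_{\ol{\tau}} U_{\ol{\tau}}''(z) + U_{\ol{\tau}}'(z) = \integ{y}{\R}[]<K(z-y) S_{\ol{\tau}}'(U_{\ol{\tau}}(y)-\theta) U_{\ol{\tau}}'(y)>,
\end{equation*}
whence
\begin{equation*}
h'(z) = \f{e^{z/\mu_{\ol{\tau}}}}{\mu_{\ol{\tau}}} \integ{y}{\R}[]<K(z-y) S_{\ol{\tau}}'(U_{\ol{\tau}}(y)-\theta) U_{\ol{\tau}}'(y)>.
\end{equation*}
Since the prefactor is strictly positive, the sign of $h'(z)$ agrees with the sign of the integral.

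The next step is to reduce the effective integration domain to $[0,\beta]$. Let $A:=U_{\ol{\tau}}^{-1}([\theta,\theta+\ol{\tau}])$. Since $U_{\ol{\tau}}' \geq 0$ on $A$, the function $U_{\ol{\tau}}$ is nondecreasing on every connected component of $A$. The component $[p,q]$ containing $[0,\beta]$ must satisfy $p \leq 0$ and $q \geq \beta$ by the definitions of $\alpha$ and $\beta$; monotonicity together with the identities $U_{\ol{\tau}}(0)=\theta$ and $U_{\ol{\tau}}(\beta)=\theta+\ol{\tau}$ then force $U_{\ol{\tau}} \equiv \theta$ on $[p,0]$ and $U_{\ol{\tau}} \equiv \theta+\ol{\tau}$ on $[\beta,q]$, so that $U_{\ol{\tau}}' = 0$ on these subintervals. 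An analogous argument shows that any other component of $A$ is a pure plateau at either $\theta$ or $\theta+\ol{\tau}$, on which $U_{\ol{\tau}}'$ again vanishes. Hence the integrand is supported in $[0,\beta]$ and
\begin{equation*}
h'(z) = \f{e^{z/\mu_{\ol{\tau}}}}{\mu_{\ol{\tau}}} \integ{y}{0}[\beta]<K(z-y) S_{\ol{\tau}}'(U_{\ol{\tau}}(y)-\theta) U_{\ol{\tau}}'(y)>.
\end{equation*}

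With the support pinned to $[0,\beta]$, the lemma follows by a direct sign-chase using \cref{remark: sigmas} (which gives $\beta \leq \sigma_1 < M$, guaranteeing the three intervals in the statement are nonempty). On $[0,\beta]$ the factor $S_{\ol{\tau}}'(U_{\ol{\tau}}(y)-\theta) U_{\ol{\tau}}'(y) \geq 0$ has total integral $S_{\ol{\tau}}(\ol{\tau}) - S_{\ol{\tau}}(0) = 1 > 0$, so it is not identically zero. For part (i), if $z < -M$ then $z - y \leq z < -M$ for every $y \in [0,\beta]$, and if $z > M+\beta$ then $z - y \geq z - \beta > M$; in both cases $K(z-y) < 0$ throughout $[0,\beta]$, yielding $h'(z) < 0$. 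For part (ii), if $z \in (-M+\beta, M)$ then $z - y \in (-M, M)$ for every $y \in [0,\beta]$, so $K(z-y) > 0$ and $h'(z) > 0$.

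The principal obstacle is the support-reduction step: one must rigorously establish that plateau components of $A$ and any excursions of $U_{\ol{\tau}}$ outside $[0,\beta]$ into the threshold band contribute nothing. Once this structural observation is justified (via the monotonicity of $U_{\ol{\tau}}$ on $A$ together with the definitions of $\alpha$ and $\beta$), the remainder of the proof reduces to the mechanical sign-chase against the intervals $(-\infty,-M)$, $(-M+\beta,M)$, and $(M+\beta,\infty)$ above.
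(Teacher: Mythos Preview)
Your proof is correct and follows the same approach as the paper: compute $h'(z)$ via the differentiated traveling-wave equation, restrict the $y$-integration to $[0,\beta]$, and then read off the sign of $K(z-y)$ on each of the three intervals. The paper's proof is a two-line version of exactly this argument. Where you differ is in rigor rather than strategy: you carefully justify the support reduction to $[0,\beta]$ by analyzing the connected components of $U_{\ol\tau}^{-1}([\theta,\theta+\ol\tau])$ and arguing that all components other than the one containing $[0,\beta]$ are plateaus at an endpoint value, whereas the paper simply writes the integral with limits $0$ and $\beta$ and leaves this step implicit (the text preceding the lemma only records that $U_{\ol\tau}'=0$ where $U_{\ol\tau}\in\{\theta,\theta+\ol\tau\}$ outside $[0,\beta]$, not the open interval). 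Your explicit check that the nonnegative weight $S_{\ol\tau}'(U_{\ol\tau}-\theta)U_{\ol\tau}'$ has positive total mass is also a welcome addition, since it is what makes the monotonicity \emph{strict}. One minor point: the inequality $\sigma_1<M$ you invoke is not literally contained in \cref{remark: sigmas}; it follows directly from the definition of $\sigma_1$ in \eqref{eq: sigma1} together with $K<0$ on $(M,\infty)$, and is used elsewhere in the paper without a dedicated citation.
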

\begin{proof}
A simple calculation shows
$$
h'(z)= \f{e^{\f{z}{\mu_{\ol{\tau}}}}}{\mu_{\ol{\tau}}} \integ{y}{0}[\beta]<K(z-y)S_{\theta,\ol{\tau}}'(U_{\ol{\tau}}(y))U_{\ol{\tau}}'(y) >.
$$
Recalling that $K(\cdot)<0$ on $(-\infty,-M)\cup (M,\infty)$ and $K(\cdot)>0$ on $(-M,M),$ it can easily be seen that the integrand is negative for the regions described in (i) and positive for the one in (ii).
\end{proof}
We are finally ready to complete the proof of \cref{thm: continuation} with one final lemma.

\begin{lemma}
The limiting solution $U_{\ol{\tau}}$ satisfies assumption (A2) with $\Delta_{\ol{\tau}}(\xi)\leq \sigma(\theta).$
\end{lemma}
\begin{proof}
We track the solution on $\mathbb{R}$ and show that all threshold requirements are met.

On $(-\infty,-M]:$ Starting with $U_{\ol{\tau}}(-\infty)=0,$ by \cref{lemma: h_inc_dec} (i),  $U_{\ol{\tau}}$ decreases.  

On $(-M,-M+\beta):$ As discussed, the only way $U_{\ol{\tau}}=\theta$ can occur is at a local maximum. But by \cref{lemma: alphbeta_critpts} (i), any possible local maximums that occur on this interval stay below $\theta.$ 

On $[-M + \beta,M]:$ The function $h$ is increasing by \cref{lemma: h_inc_dec} (ii) and since $U_{\ol{\tau}}' \geq 0$ on $[0,\beta]\subset [0,M],$ there are only two possibilities. If $h(-M+\beta)<0,$ then $h$ changes signs once, from negative to positive; therefore, $U_{\ol{\tau}}$ has exactly one critical point, a local minimum, for some $z_*\in (-M+\beta,0)$. On the other hand, if $h(-M+\beta)\geq 0,$ then $U_{\ol{\tau}}$ is strictly increasing. In either case, we may conclude that $U_{\ol{\tau}}' > 0$ on $[0,M]\supset [0,\beta]$ and $U_{\ol{\tau}}$ crosses each threshold exactly once.

On $(M,M+\beta):$ Any possible local minimums stay above $\theta+\ol{\tau}$ by \cref{lemma: alphbeta_critpts} (ii). 

On $[M+\beta,\infty):$ The function $h$ is decreasing by \cref{lemma: h_inc_dec} (i), leaving only three possibilities. If  $h(M+\beta)<0,$ then we must have $U_{\ol{\tau}}'<0$. If $h(M+\beta)\geq 0$ and $h(\infty)\geq 0,$ then $U_{\ol{\tau}}'>0$. Finally, if $h(M+\beta)\geq 0$ and $h(\infty) <0,$ $h$ must change signs exactly once, from positive to negative. Therefore, $U_{\ol{\tau}}$ has exactly one critical point, a local maximum, in which $U_{\ol{\tau}}'<0$ thereafter. In all three cases, we see that $U_{\ol{\tau}}(\infty)=1$ and $U_{\ol{\tau}}>\theta+\ol{\tau}$ on this interval.

In conclusion, $U_{\ol{\tau}}$ satisfies assumption (A2) with $\Delta_{\ol{\tau}}(\ol{\tau})=\beta\leq \sigma(\theta).$
\end{proof}
Combining all lemmas in this subsection, we have completed the proof of \cref{thm: continuation}. The work in this section rigorously established the existence of fronts under continuous changes in $\tau.$ 
%\begin{remark}
%In each iteration, there is no reason why we cannot also deform $K$ to $\ol{K}$ with $\norm{\ol{K}-K}_{1,1}$ small; we have left this idea out of the discussion because it requires calculating $\sigma_1,\, \sigma_2, \, \sigma_3$ for each $K,$ cluttering the mathematical arguments while providing similar insight.
%\end{remark}

\section{Example of Existence for all $\tau$} \label{sec: Example}
In \cref{thm: cont_taustar}, we developed an {\it a priori} existence result by establishing a number $\tau^*(\theta)>0$ such that existence holds for $\tau \leq \tau^*(\theta).$ The number $\tau^*(\theta)$ is obtained by calculating $\sigma(\theta),$ plugging it into the function $\Phi(\theta,\tau)=\tau-\omega_L(\sigma(\theta),\tau),$ and scanning for the point where the increasing (in $\tau$) function $\Phi$ crosses zero. All of these calculations are easy to perform numerically.

The purpose of this section is to show that our requirement $\Delta_{\tau}(\xi)\leq \sigma(\theta)$ can be quite nonrestrictive. We perform a numerical example where continuation may proceed from $\tau=0$ to $\tau=1-2\theta,$ where a standing front exists.
\\ \\
\noindent{\bf Numerical Method.} All solutions are obtained by approximating sigmoidal firing rates by $N$ Heaviside step functions and using a numerical root solver to find discrete points $0=\Delta_0<\Delta_1<...<\Delta_N$ such that $U_{\tau}(\Delta_k)=\theta+\f{k}{N}\tau.$ The wave speed is solved from $U_{\tau}(0)=\theta.$ In total, there are $N+1$ equations and $N+1$ unknowns. In the figures below, $N=50$ and the right end point method is used to approximate $S_{\theta,\tau}.$

 \subsubsection*{Firing Rate}
 Inspired by the work in \cite{CoombesSchmidt}, we define
 \begin{equation}
 \label{eq: smooth_heav_exp}
 S_{\theta,\tau}(u) = 
 \begin{cases}
 0 & u\leq \theta, \\
 \integ{x}{0}[u-\theta]<A(\tau)\e{\f{r}{x(x-\tau)}}> & \theta<u<\theta+\tau, \\
 1 & u\geq \theta + \tau.
 \end{cases}
 \end{equation}
Choose $r=0.01$. The function $A(\tau):=\frac{1}{\int_0^\tau \exp\left(\frac{r}{x(x-\tau)}\right)\, \mathrm{d}x}$ is a normalizing constant. Note that $S_{\theta,\tau}$ is odd symmetric about its inflection point.

\subsubsection*{Kernel}
Keeping with standard examples from the literature, we choose
\begin{equation}\label{eq: ex_kmh}
K(x)=Ae^{-a|x|}-Be^{-b|x|},
\end{equation}
with $A=5, a=0.5, B=4$. From the assumption $\int_\mathbb{R} K=1$, we find $b=\frac{2aB}{2A-a}=0.4211$. Note that $M=\frac{\ln(A/B)}{a-b}=2.8282$.
\subsubsection*{Parameter Calculations}
Fix $\theta=0.2.$ Recall the following definitions. The parameter $\sigma_1(\theta)\in (0,M)$ is the unique constant such that $V_\theta'(-\sigma_1(\theta))=0,$ or equivalently,
$$
\integ{x}{-\infty}[0]<e^{\f{x}{v_\theta}}K(x-\sigma_1(\theta))>=0,
$$
where
$$
\phi(v_\theta)=\integ{x}{-\infty}[0]<e^{\f{x}{v_\theta}}K(x)>=\f{2}-\theta.
$$
Define $\sigma_2(\theta)$ to be the positive constant that is the unique solution to the equation
\begin{align}
\left(\int_{-\infty}^{-M-\sigma_2(\theta)}+\int_{-M}^{-M+\sigma_2(\theta)}\right) K(x) \, \mathrm{d}x &=\theta. \tag{\ref{eq: sigma2}}
\end{align}
Numerical calculations suggest $\sigma_1(\theta)=2.4860$ and $\sigma_2(\theta)=2.2953$ so $\sigma(\theta)=\min\{\sigma_1(\theta),\,\sigma_2(\theta)\}=2.2953$.
Using these parameters, recall the increasing function 
\begin{equation}\tag{\ref{eq: Phi_tau}}
\Phi(\theta,\tau):=\tau-\omega_L(\sigma(\theta),\tau).
\end{equation}
where
\begin{align*}
\omega_L(\sigma(\theta),\tau)&=
\begin{cases}
\displaystyle
\min_{\substack{y \in [0,\sigma(\theta)] \\ \delta\in [\theta,\theta+\tau]}} (V_\delta(\sigma(\theta)-y)-V_\delta(-y)), &\tau<\f{2}-\theta, \\
\displaystyle\min_{\substack{y \in [0,\sigma(\theta)] \\ \delta\in [\theta,\f{2}]}} (V_\delta(\sigma(\theta)-y)-V_\delta(-y)), &\tau\geq\f{2}-\theta,
\end{cases} \\
V_\delta(z)&=\f{v_\delta} \int_0^\infty \int_{-\infty}^0 e^{\f{x}{v_\delta}}K(x+z-y)\,\mathrm{d}x\mathrm{d}y \\
&=\integ{x}{-\infty}[z]<K(x)>-\integ{x}{-\infty}[z]<e^{\f{x-z}{v_\delta}}K(x)>, 
\end{align*}
and $v_\delta$ uniquely solves
$$
\phi(v_\delta)=\integ{x}{-\infty}[0]<e^{\f{x}{v_\delta}}K(x)>=\f{2}-\delta
$$
for $\delta<\f{2}.$ 
\subsubsection*{Results}
According to \cref{thm: cont_taustar}, if $\Phi(\theta,1-2\theta)\leq 0,$ then $\tau^*(\theta)=1-2\theta$ and traveling fronts exist for $\tau\in[0,1-2\theta)$ with a standing front existing when $\tau=1-2\theta.$ Here, $1-2\theta=0.6.$ Moreover, all solutions satisfy $\Delta_{\tau}(\xi)\leq \sigma(\theta).$ This is indeed the case; in Figures \ref{fig: Phi_plot_ex2} and \ref{fig: Front_ex2_and_Front_zoomed_ex2} below, we highlight these descriptions and plot the standing front.
\begin{figure}[H]
\centering
\subfigure[]
{\includegraphics[width=70mm]{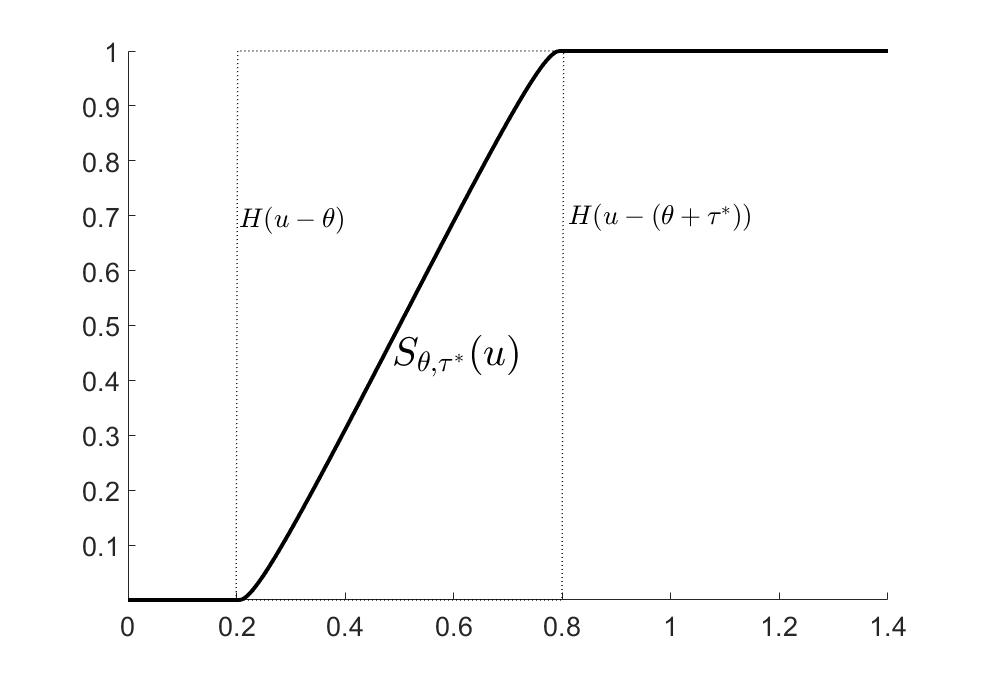}}
\subfigure[]
{\includegraphics[width=70mm]{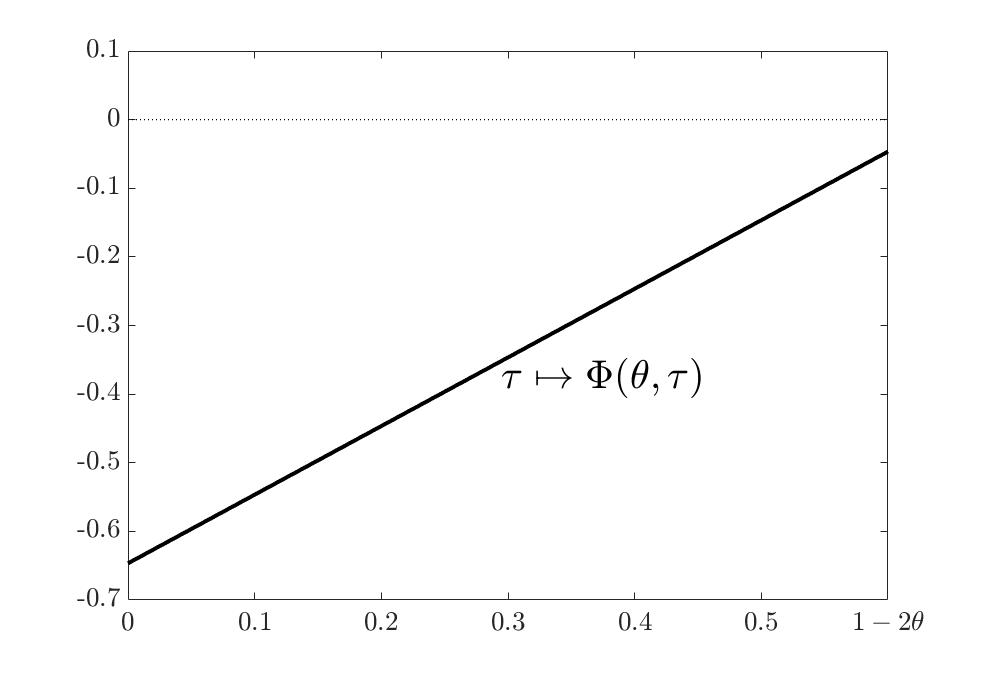}}
\caption{(a) Plot of $S_{\theta,\tau^*}(u).$ The dotted vertical lines denote $H(u-\theta)$ and $H(u-(\theta+\tau^*(\theta)))$ respectively. (b) Plot of $\tau \mapsto \Phi(\theta,\tau).$ We are guaranteed $\Delta_{\tau}(\tau)<\sigma(\theta)$ when $\tau\leq 1-2\theta.$ Hence, our hypotheses never fail.}
\label{fig: Phi_plot_ex2}
\end{figure}
\begin{figure}[H] 
\centering
\subfigure[]{\includegraphics[width=70mm]{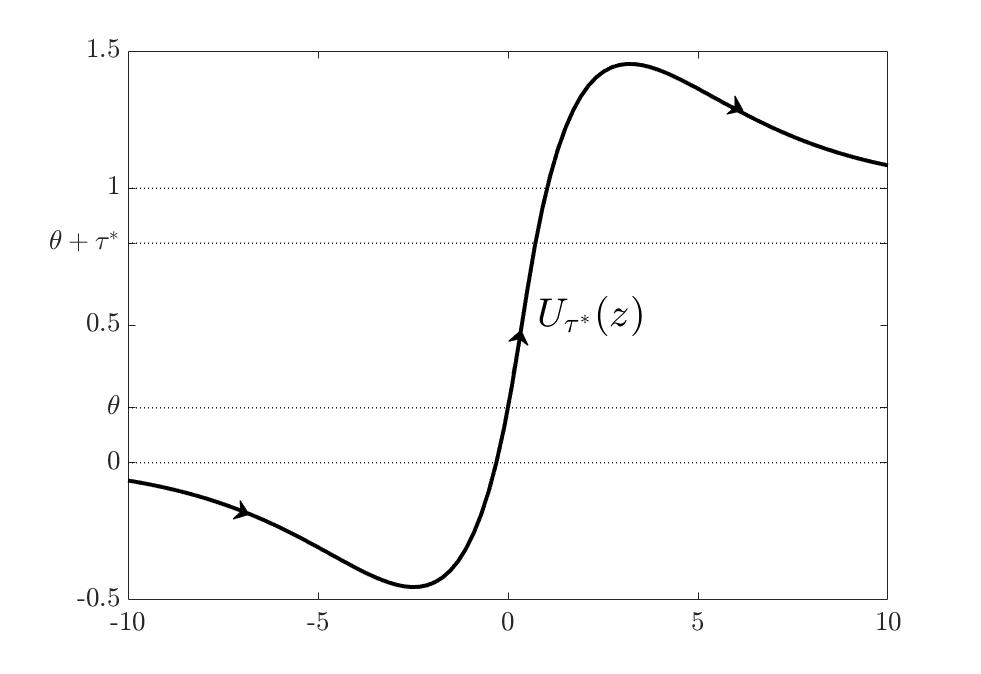}}
\subfigure[]{\includegraphics[width=70mm]{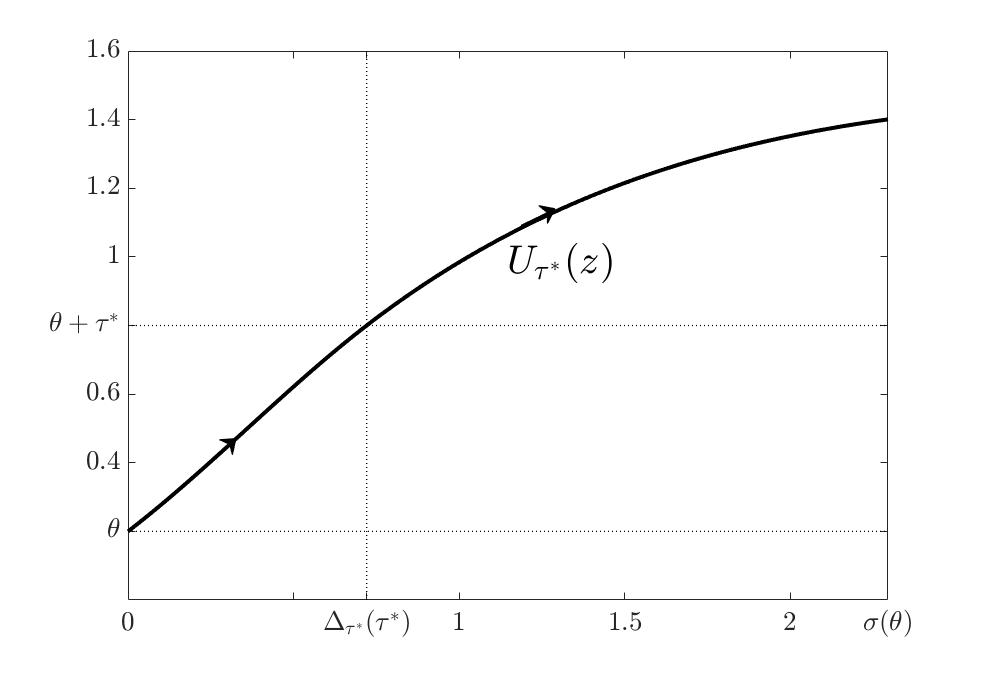}}
\caption{(a) Plot of $U_{\tau^*}$ when $\mu_{\tau^*}=0.$ Note that $U_{\tau^*}$ is increasing through the threshold region. (b) Zoomed in plot of $U_{\tau^*},$ showing $\Delta_{\tau^*}(\tau^*(\theta))<\sigma(\theta).$} 
\label{fig: Front_ex2_and_Front_zoomed_ex2}
\end{figure}
Since fronts exist for all $\tau \in[0,1-2\theta]$, we can plot the function $\tau \mapsto \mu_{\tau}$. Intuitively, since the function $\tau \mapsto S_{\theta,\tau}(u)$ is decreasing in $\tau$, we expect slower firing rates to lead to slower traveling waves. This is indeed the case. See Figure \ref{fig: wavespeed_plot_ex2_SIADS}.
\begin{figure}[H]
\centering
\includegraphics[width=75mm]{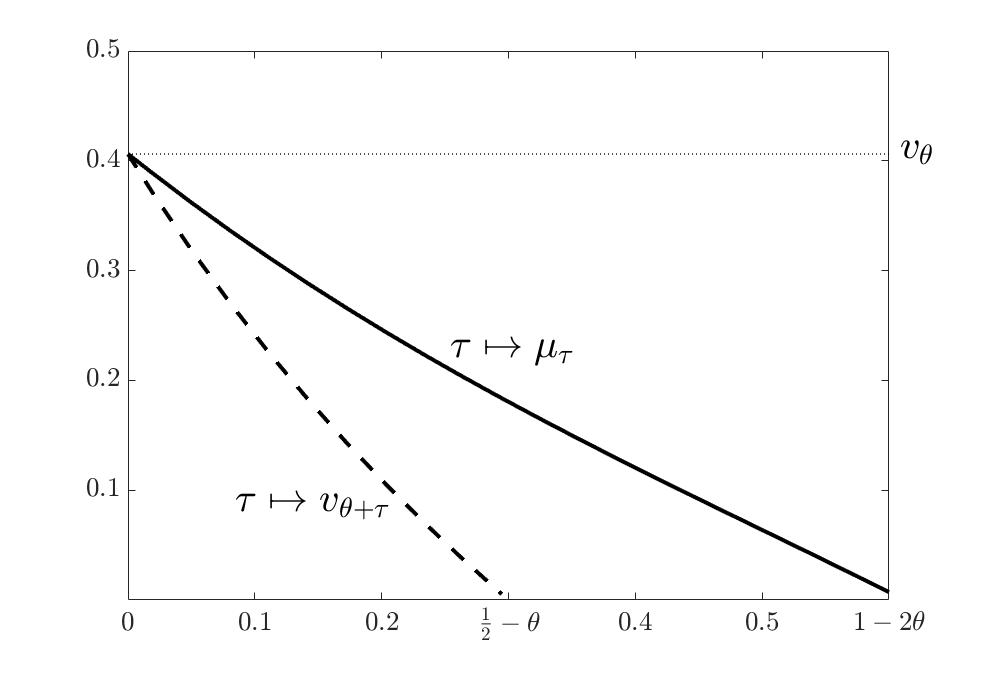}
\caption{Plot of the wave speed as a function of $\tau$. The functions $\tau \mapsto \mu_{\tau}$ and $\tau \mapsto v_{\theta+\tau}$ are decreasing. Moreover, $v_{\theta+\tau}\leq \mu_{\tau}<v_\theta$ holds, as predicted by \cref{lemma: Global_Wave_Bounds}.}
\label{fig: wavespeed_plot_ex2_SIADS}
\end{figure}
\section*{Discussion}
In the present study, we applied the powerful homotopy technique in \cite{ExistenceandUniqueness-ErmMcLeod} in order to prove the existence of traveling fronts in neural field models with lateral inhibition kernels and smooth Heaviside firing rates. Our results expand the existence and uniqueness results in \cite{Zhang-HowDo} by exploring the traveling wave problem beyond models with Heaviside firing rates, which are less realistic biologically. To the author's knowledge, beyond the landmark study of monotone fronts in \cite{ExistenceandUniqueness-ErmMcLeod}, our problem was previously unsolved in a rigorous setting. Unlike in \cite{ExistenceandUniqueness-ErmMcLeod}, we had to carefully handle the fact that kernels with inhibition add great difficulty when repeatedly applying the implicit function theorem over Banach spaces.

Given sigmoidal firing rates $S_{\theta,\tau}$, the greatest advancement of this work is coming up with {\it a priori} lower bounds $\tau^*(\theta)$ such that existence holds for $\tau\leq \tau^*(\theta).$ This bound is obtained based on global comparisons between waves arising from sigmoidal versus Heaviside firing rates. In \cref{sec: Example}, we worked through a reasonable example that indicated that many traveling waves are proven to exist based on $\tau^*(\theta).$

However, our study leads to a variety of open problems. For one, we do not have a method to prove uniqueness beyond the $\tau \ll 1$ case. In part, this is because our uniqueness proof relies on proving that solutions can be trapped between Heaviside solutions. When $\tau \ll 1$ does not hold, the bounds are not good enough to prove the result. We suspect uniqueness holds since it holds for all thresholds and lateral inhibition kernels in the Heaviside case \cite{Dyson2019_MBE,Zhang-HowDo}.

A related topic of importance that we did not study is stability. In the Heaviside case, the so-called Evan's function method is common, but not proven to be available to us. Without an Evan's function to classify the spectrum from the linearization, we may need a way of comparing---or even constructing---sub and super solutions, as in \cite{ExistenceUniqueness-Chen}. This task seems difficult since $K<0$ occurring disrupts some of the obvious behaviors of the model when $K>0$ only.

Finally, we realize that the front is typically most valuable when viewed as the fast $O(\epsilon)$-time jump in the pulse solution to the singularly perturbed system \eqref{eq: intro_system_1}-\eqref{eq: intro_system_2}. Although the analysis is highly nontrivial, this work leads us to believe that for most kernel choices, traveling pulses also exist when $\epsilon \ll 1.$ A novel adaptation of the compelling result in \cite{Faye2015} may help us achieve the result.

\section*{Appendix A} \label{sec: app_cont}
\subsection*{Application of the Implicit Function Theorem}
{\bf Proof of \cref{lemma: Implicit_smooth}. } 
\begin{proof}[\unskip\nopunct] 
Using standard methods like those in \cite{accinelli2010generalization}, we first show that $\epsilon_1$ and $\delta_1$ may be chosen so that for fixed $S_{\theta,\ol{\tau}}$ with $\norm{S_{\theta,\ol{\tau}}-S_{\theta,\tau}}_1<\delta_1,$ the function $\mathcal{N}$ is a contraction mapping. Then we show we may choose $\epsilon_0$ so that $\mathcal{N}$ maps the ball $E(U_{\tau},\mu_{\tau},\ol{\psi}_{\tau})\times \mathbb{R} \cap \mathcal{B}((U_{\tau},\mu_{\tau}); \epsilon_0)$ into itself. We use the following notation: $$\norm{(\ol{U},\ol{\mu})}=\norm{\ol{U}}_{2,\infty} + |\ol{\mu}|, \qquad L:=DF_{U,\mu}[U_{\tau},\mu_{\tau},S_{\theta,\tau}].
$$

For the first part, let $S_{\theta,\ol{\tau}}$, $(\ol{U}_1,\ol{\mu}_1),$ and $(\ol{U}_2,\ol{\mu}_2)$ be fixed. Then
\begin{align*}
\label{eq: contraction}
&\norm{\mathcal{N}[\ol{U}_2,\ol{\mu}_2,S_{\theta,\ol{\tau}}]-\mathcal{N}[\ol{U}_1,\ol{\mu}_1,S_{\theta,\ol{\tau}}]} \\
&=\norm{L^{-1}(L(\ol{U}_2-\ol{U}_1,\ol{\mu}_2-\ol{\mu}_1)-(F[\ol{U}_2,\ol{\mu}_2,S_{\theta,\ol{\tau}}]-F[\ol{U}_1,\ol{\mu}_1,S_{\theta,\ol{\tau}}]))} \\
&=\norm{L^{-1}(L(\ol{U}_2-\ol{U}_1,\ol{\mu}_2-\ol{\mu}_1)-DF_{U,\mu}[\ol{U}_1,\ol{\mu}_1,S_{\theta,\ol{\tau}}](\ol{U}_2-\ol{U}_1,\ol{\mu}_2-\ol{\mu}_1))} \\
&+\norm{L^{-1}}(o(\norm{\ol{U}_2-\ol{U}_1}_{2,\infty}) + o(|\ol{\mu}_2-\ol{\mu}_1|)) \\
&\leq \epsilon_1\norm{L^{-1}}(\norm{\ol{U}_2-\ol{U}_1}_{2,\infty}+|\ol{\mu}_2-\ol{\mu}_1|). \tag{A.4}
\end{align*}
By the continuity claims in \cref{lemma: Gat_cont}, choose $\delta_1>0,$ $\epsilon_0>0$ small so that $\norm{S_{\theta,\ol{\tau}}-S_{\theta,\tau}}_1<\delta_1,$ $(\norm{\ol{U}_2-\ol{U}_1}_{2,\infty}+|\ol{\mu}_2-\ol{\mu}_1|)<\epsilon_0.$ Then $\epsilon_1=\f{2 \norm{L^{-1}}}$ may be chosen independent of $(\ol{U}_1,\ol{\mu}_1),$ $(\ol{U}_2,\ol{\mu}_2),$  $\ol{\tau}$ so that $\mathcal{N}$ is a contraction mapping.

For the second part, let $S_{\theta,\ol{\tau}},$ $(\ol{U},\ol{\mu})$ be fixed. Then
\begin{align*}
&\norm{\mathcal{N}[\ol{U},\ol{\mu},S_{\theta,\ol{\tau}}]-(U_{\tau},\mu_{\tau})} \\
&=\norm{\mathcal{N}[\ol{U},\ol{\mu},S_{\theta,\ol{\tau}}]-\mathcal{N}[U_{\tau},\mu_{\tau},S_{\theta,\tau}]} \\
&\leq \norm{\mathcal{N}[\ol{U},\ol{\mu},S_{\theta,\ol{\tau}}]-\mathcal{N}[U_{\tau},\mu_{\tau},S_{\theta,\ol{\tau}}]} \tag{A.5}\label{eq: balltoself_one} \\
&+ \norm{\mathcal{N}[U_{\tau},\mu_{\tau},S_{\theta,\ol{\tau}}]-\mathcal{N}[U_{\tau},\mu_{\tau},S_{\theta,\tau}]}. \tag{A.6} \label{eq: balltoself_two}
\end{align*}
By the analysis for the contraction mapping in \eqref{eq: contraction},
\begin{align*}
\norm{\mathcal{N}[\ol{U},\ol{\mu},S_{\theta,\ol{\tau}}]-\mathcal{N}[U_{\tau},\mu_{\tau},S_{\theta,\ol{\tau}}]} &\leq \f{2}(\norm{\ol{U}-U_{\tau}}_{2,\infty}+|\ol{\mu}-\mu_{\tau}|)\tag{\ref{eq: balltoself_one}} \\ 
&\leq \f{\epsilon_0}{2}.
\end{align*}
By \cref{lemma: Gat_cont}, we may choose $\delta_2$ so that
\begin{align*}
\norm{\mathcal{N}[U_{\tau},\mu_{\tau},S_{\theta,\ol{\tau}}]-\mathcal{N}[U_{\tau},\mu_{\tau},S_{\theta,\tau}]}&=\norm{L^{-1}(F[U_{\tau},\mu_{\tau},S_{\theta,\ol{\tau}}]-F[U_{\tau},\mu_{\tau},S_{\theta,\tau}])} \tag{\ref{eq: balltoself_two}} \\
&\leq \f{\epsilon_0}{2}
\end{align*}
when $\norm{S_{\theta,\ol{\tau}}-S_{\theta,\tau}}_1 < \delta_2.$ Finally the choice $\delta_0=\min\{\delta_1,\delta_2\}$ guarantees 
$$
\norm{\mathcal{N}[\ol{U},\ol{\mu},S_{\theta,\ol{\tau}}]-(U_{\tau},\mu_{\tau})}\leq \epsilon_0.
$$
We may write $F[U(S_{\theta,\ol{\tau}}),\mu(S_{\theta,\ol{\tau}}),S_{\theta,\ol{\tau}}]=0$ for $\norm{S_{\theta,\ol{\tau}}-S_{\theta,\tau}}_1<\delta_0,$ with $(U(S_{\theta,\ol{\tau}}),\mu(S_{\theta,\ol{\tau}})) \in E(U_{\tau},\mu_{\tau},\ol{\psi}_{\tau})\times \mathbb{R} \cap \mathcal{B}((U_{\tau},\mu_{\tau}); \epsilon_0)$ continuous in the norm $\norm{\cdot}_{2,\infty} + |\cdot|$ with respect to changes in $S_{\theta,\ol{\tau}}$ in the norm $\norm{\cdot}_1$ by \cref{lemma: Gat_cont}.
\end{proof}

\subsubsection*{Competing Interests}
The author declares that they have no competing interests.
\subsubsection*{Acknowledgments}
The author would like to thank Lycoming College, and the Lehigh University College of Arts and Sciences for a generous summer research fellowship in the summer of 2018, under the advisement of Linghai Zhang. He also wants to thank the anonymous referees, as well as Daniel Conus, for giving feedback on early versions of the manuscript.

\bibliographystyle{siam}
\begingroup
\setlength{\bibsep}{0pt}
%\setstretch{1}
\bibliography{bibliography_cumulative}
\endgroup
\end{document}